\newtheorem{remark}[theorem]{Remark}
\newcommand{\N}{\mathbb{N}}
\newcommand{\R}{\mathbb{R}}
\newcommand{\QQ}{\mathcal{Q}}
\newcommand{\LL}{\mathcal{L}}
\newcommand{\range}{\mathcal R}
\newcommand{\kernel}{\mathcal N}
\newcommand{\dx}[1][x]{\ensuremath{\,{\rm{d}} #1}}
\newcommand{\Langle}{\left\langle}
\newcommand{\Rangle}{\right\rangle}
\def\norm#1{\hspace{0.2ex} \|#1\| \hspace{0.2ex}} 
\newcommand{\kommentar}[1]{}
\begin{document}

\title{Monotonicity-based inversion of the fractional Schr\"odinger equation II. General potentials and stability}

\author{Bastian Harrach\footnotemark[2], and Yi-Hsuan Lin\footnotemark[3]}
\renewcommand{\thefootnote}{\fnsymbol{footnote}}

\footnotetext[2]{Institute for Mathematics, Goethe-University Frankfurt, Frankfurt am Main, 
Germany (harrach@math.uni-frankfurt.de)}

\footnotetext[3]{Department of Applied Mathematics, National Chiao Tung University, Hsinchu, Taiwan (yihsuanlin3@gmail.com)}

\maketitle

\let\thefootnote\relax\footnotetext{\hrule \vspace{1ex} \centering This is a preprint version of a journal article published in\\
 \emph{SIAM J. Math. Anal.} \textbf{52}(1), 402--436, 2020
(\url{https://doi.org/10.1137/19M1251576}).
}

\begin{abstract}
In this work, we use monotonicity-based methods for the fractional Schr\"odinger equation with general potentials $q\in L^\infty(\Omega)$
in a Lipschitz bounded open set $\Omega\subset \R^n$ in any dimension $n\in \N$. We demonstrate that if-and-only-if monotonicity relations between potentials and the Dirichlet-to-Neumann map hold up to a finite dimensional subspace. Based on these if-and-only-if monotonicity relations, we derive a constructive global uniqueness result for the fractional Calder\'on problem and its linearized version. We also derive a reconstruction method for unknown obstacles in a given domain that only requires the background solution of the fractional Schr\"odinger equation, and we prove uniqueness and Lipschitz stability from finitely many measurements for potentials lying in an a-priori known bounded set in a finite dimensional subset of $L^\infty(\Omega)$.
\end{abstract}

\begin{keywords}
Fractional inverse problem, fractional Schr\"{o}dinger equation, monotonicity, localized potentials, Lipschitz stability, Loewner order
\end{keywords}

\begin{AMS}
35R30 
\end{AMS}

\section{Introduction }

Let $\Omega$ be a Lipschitz bounded open set in $\mathbb{R}^{n}$, $n\in \N$,
and $q\in L^{\infty}(\Omega)$ be a potential. For $0<s<1$, we consider the
Dirichlet problem for the nonlocal fractional Schr\"{o}dinger equation 
\begin{equation}\label{eq:intro_fractional}
(-\Delta)^{s}u+qu=0 \quad \text{ in }\Omega, \qquad u|_{\Omega_e}=F \quad \text{ in } \Omega_e:=\R^n\setminus \Omega,
\end{equation}
where the fractional Laplacian $(-\Delta)^{s}$ is defined by Fourier transform. We will consider the Calder\'on problem
of reconstructing an unknown potential $q$ from the Dirichlet-to-Neumann (DtN) operator
\[
\Lambda(q):\ H(\Omega_e)\to H(\Omega_e)^*, \quad F\mapsto (-\Delta)^s u|_{\Omega_e}, \quad \text{ where $u\in H^s(\R^n)$
solves \eqref{eq:intro_fractional},}
\]
cf.\ Section \ref{Section 2} for a precise definition of the DtN-operator and the function spaces, and 
\cite[Section 3]{ghosh2016calder} for further properties of the nonlocal DtN map $\Lambda_{q}$.

In the first part of this work \cite{harrach2019nonlocal-mono1}, we proved an if-and-only-if monotonicity relation 
between potentials $q\in L^\infty_+(\Omega)$ with positive essential infima and the associated DtN operators $\Lambda(q)$,
where the DtN operators are ordered in the sense of definiteness of quadratic forms (also known as Loewner order). From this relation, 
we obtained a constructive uniqueness result for the Calder\'on problem and a shape reconstruction method to determine unknown obstacles in a given domain.

The aim of this work is to drop the positivity assumption on the potential $q$ and extend the results from \cite{harrach2019nonlocal-mono1} to general 
potentials $q\in L^\infty(\Omega)$. Note that this may include resonant cases where $0$ is a Dirichlet eigenvalue 
of $(-\Delta)^{s}+q$ in $\Omega$. In such cases the Dirichlet problem \eqref{eq:intro_fractional} is only solvable 
in a subspace of the natural Dirichlet trace space $H(\Omega_e)$ with finite codimension, and the DtN operator $\Lambda(q)$ is defined accordingly,
cf.\ Section \ref{Section 2}. For general potentials $q_1,q_2\in L^\infty(\Omega)$, we will use a combination of monotonicity arguments and localized potentials to show that 
\[
q_1\leq q_2 \quad \text{ if and only if } \quad \Lambda(q_1)\leq_\text{fin} \Lambda(q_2),
\]
cf.\ Theorem \ref{Thm:if-and-only-if monotonicity}, where $q_1\leq q_2$ denotes that $q_1(x)\leq q_2(x)$ for almost every (a.e.) $x\in \Omega$, and
$\Lambda(q_1)\leq_\text{fin} \Lambda(q_2)$ denotes that the quadratic form associated with $\Lambda(q_2)-\Lambda(q_1)$ is non-negative 
on a subspace of $H(\Omega_e)$ with finite codimension (resp.\ on a subspace with finite codimension of the intersection of their domains of definition in the case of resonances).

This if-and-only-if monotonicity relation yields a constructive uniqueness proof for the fractional Calder\'on problem, cf.~Theorem~\ref{thm:constructive}.
For non-resonant potentials, we show a similar if-and-only-if monotonicity relation also for the linearized DtN-operators, and deduce uniqueness for the linearized Calder\'on problem, cf.\ Theorem \ref{thm:converse_mon_frechet}, and Corollary \ref{cor:Calderon_linearized}.

We then turn to the shape reconstruction (or inclusion detection) problem of locating regions where a unknown (non-resonant) coefficient function $q\in L^\infty(\Omega)$ differs from a known (non-resonant) reference function $q_0\in L^\infty(\Omega)$. 
We will show that this can be done without solving the fractional Schrödinger equation
for potentials other than the reference potentials $q_0$. In the indefinite case, with no further assumption on $q_0$ and $q$, we characterize the support of $q-q_0$ as the intersection of all closed sets fulfilling a linearized monotonicity condition, cf.\ Theorem~\ref{thm:support_from_closed_sets}. In the definite case, that either $q\geq q_0$ or $q_0\geq q$ in all of $\Omega$, we also obtain an easier characterization of  the (inner) support of $q-q_0$ as the union of all open balls fulfilling a linearized monotonicity condition, cf.\ Theorem~\ref{thm:support_from_open_balls}.

Our final result uses monotonicity and localized potentials arguments to show uniqueness and Lipschitz stability for the fractional Calder\'on problem with finitely many measurements for the case that the potential belongs to an a-priori known bounded set in a finite dimensional subset of $L^\infty(\Omega)$.

Let us give some references of the fast growing body of literature on inverse problems involving the non-local fractional Laplacian operator,
and relate our work to previous results. Fractional inverse problems appear when an imaging domain is investigated by an anomalous diffusion process and this process is more complicated than in the standard Brownian motion modeled by the Laplacian $-\Delta$. Global uniqueness for the Calder\'on problem for the fractional Schr\"odinger equation was first proven by Ghosh, Salo, and Uhlmann \cite{ghosh2016calder}, and 
the recent work of Ghosh, R\"uland, Salo, and Uhlmann \cite{ghosh2018uniqueness} shows uniqueness with a single measurement.
Note that both results rely on a very strong unique continuation property, and we will utilize this property from \cite{ghosh2016calder}
as a key ingredient for our results.
Furthermore, for uniqueness results, \cite{ghosh2017calder} and \cite{lai2018global} solved the Calder\'on problem for general nonlocal variable elliptic operators and the semilinear case, respectively. In addition, \cite{cekic2018calder} studied the fractional Calder\'on problem with drift, which shows the global uniqueness result holds for drift and potential simultaneously, which is the first example to demonstrate different results between local and nonlocal inverse problems. Recently, \cite{LLR2019Calderonparabolic} investigated the Calder\'on problem for a space-time fractional parabolic equation. We also refer readers to \cite{cao2017simultaneously,cao2018determining} for further studies on the simultaneous determination of parameters in fractional inverse problems.

Arguments combining PDE-based estimates with blow-up techniques have a long history in the study of inverse coefficients problems, see, e.g., \cite{alessandrini1990,ikehata1999identification,isakov1988,kohn1984determining,kohn1985determining}.
The technique of combining monotonicity estimates with localized potentials \cite{gebauer2008localized} as used herein is a flexible recent approach that has already lead to a number of results, cf.\ \cite{arnold2013unique,barth2017detecting,brander2018monotonicity,griesmaier2018monotonicity,harrach2009uniqueness,harrach2012simultaneous,harrach2019nonlocal-mono1,harrach2018localizing,harrach2019dimension,harrach2018helmholtz,harrach2010exact,harrach2013monotonicity,harrach2017local,seo2018learning}. Also, several recent works build practical reconstruction methods on monotonicity properties \cite{furuya2019monotonicity,garde2017comparison,garde2019reconstruction,garde2017convergence,garde2019regularized,harrach2015combining,harrach2016enhancing,harrach2018monotonicity,harrach2015resolution,maffucci2016novel,su2017monotonicity,tamburrino2002new,tamburrino2016monotonicity,ventre2017design,zhou2018monotonicity}.
Notably, the present work shows that monotonicity-based reconstruction methods that have been developed for standard diffusion processes
can also be applied to the fractional diffusion case and that the methods even become simpler and more powerful due to the very strong
unique continuation property of Ghosh, Salo, and Uhlmann \cite{ghosh2016calder}. Moreover, we
derive in this work a new result on the existence of simultaneously localized potentials for two coefficient functions, that may 
be of importance also in the study of other inverse problems.

 Logarithmic stability results for the fractional Schr\"odinger equation and their optimality were proven by Rüland and Salo in \cite{ruland2017stability,ruland2018exponential}.
Lipschitz stability for the finite dimensional fractional Calder\'on problem with a specific set of finitely many measurements (that depend on the unknown potentials) was shown by R\"uland and Sincich in \cite{ruland2018lipschitz}. Note that our Lipschitz stability result in Section \ref{Section 5} complements the result in \cite{ruland2018lipschitz} as we show that any sufficiently high number of measurements
(depending only on the a-priori data but not on the unknown potentials) uniquely determines the potential 
and that Lipschitz stability holds. Moreover, let us stress that the idea of using monotonicity and localized potentials arguments for proving Lipschitz stability (that was already utilized in \cite{eberle2019lipschitz,harrach2019uniqueness,harrach2019global,seo2018learning}), differs from traditional approaches that are mostly based on quantitative unique continuation or quantitative Runge approximation, cf.,
\cite{alessandrini1996determining,alessandrini2018lipschitz,alessandrini2017lipschitz,alessandrini2005lipschitz,bacchelli2006lipschitz,beilina2017lipschitz,bellassoued2006lipschitz,bellassoued2007lipschitz,beretta2017uniqueness,beretta2013lipschitz,beretta2011lipschitz,cheng2003lipschitz,imanuvilov1998lipschitz,imanuvilov2001global,kazemi1993stability,klibanov2006lipschitz_nonstandard,klibanov2006lipschitz,melendez2013lipschitz,ruland2018lipschitz,sincich2007lipschitz,yuan2007lipschitz,yuan2009lipschitz}.
Our new approach of showing Lipschitz stability seems conceptually simpler as it does not require quantitative analytic estimates. On the downside, our new approach does not give any analytic bounds on the Lipschitz stability constants that may characterize the asymptotic instability when the dimension of the ansatz space tends to infinity. It may however, lead to a numerical algorithm to calculate the Lipschitz constant for a given setting, cf.~\cite{harrach2019stable,harrach2019global}, which might be important to
quantify the achievable resolution and noise robustness in practical applications.

The main technical difficulty in extending the results from the positive potentials case \cite{harrach2019nonlocal-mono1} to general coefficients $q\in L^\infty(\Omega)$ is to prove two new extensions
of the localized potentials approach \cite{gebauer2008localized}. 
For general potentials, the variational formulation of the fractional Schr\"odinger equation is no longer coercive but a compact perturbation of a coercive formulation and resonances may arise. To overcome this difficulty, we
use an approach that originated in \cite{harrach2018helmholtz} and work in spaces of finite codimension where the formulation is still coercive and resonances are excluded. This makes it necessary to prove that any subspace of finite codimension contains localized potentials.
The second major difficulty comes from the fact that only the simpler monotonicity inequality in \cite[Lemma~3.1]{harrach2019nonlocal-mono1} can be extended to general potentials, cf.\ Theorem~\ref{Theorem for monotonicity} in this work. This makes it necessary to prove that
localized potentials exist for two different coefficients simultaneously (and in any subspace of finite codimension). It can be expected that the idea of simultaneously localized potentials introduced in this work will also be helpful to extend monotonicity-based methods to other applications.

The paper is structured as follows. In Section \ref{Section 2}, we summarize the variational theory for the fractional Schr\"odinger equation, introduce the DtN operator and the unique continuation property from \cite{ghosh2016calder}. In Section 3, we define a generalized Loewner order for linear operators, which holds up to a finite dimensional subspace of a Hilbert space. We also show that increasing potentials $q$ monotonically increases the corresponding DtN map $\Lambda_q$ in the sense of this generalized Loewner order, 
and prove the existence of localized potentials to control the energy terms appearing in the monotonicity relations.
The last two sections contain our main results. 
In Section \ref{Section 4}, we investigate a converse result for the monotonicity relations 
using localized potentials, to deduce if-and-only-if monotonicity relations between the DtN map and the potentials. Based on these results, we prove uniqueness for the fractional Calder\'on problem in a constructive way. 
We also prove uniqueness for the linearized fractional Calder\'on problem and develop an inclusion detection algorithm based on monotonicity tests. Finally, in Section \ref{Section 5}, we use the monotonicity relations and the localized potentials, to prove uniqueness and Lipschitz stability in finite dimensional subspaces by finitely many measurements.

\section{The fractional Schr\"{o}dinger equation for general potentials \label{Section 2}}

Throughout this work let $s\in (0,1)$, $n\in \N$, $\Omega\subseteq \R^n$ be a Lipschitz bounded open set,
and $q\in L^\infty(\Omega)$. All function spaces in this work are real-valued unless indicated otherwise. In this section, we briefly summarize some notations and results on the fractional Schr\"{o}dinger equation and the associated Dirichlet problem.

\subsection{Variational formulation of the fractional Schr\"{o}dinger equation}

As in \cite{harrach2019nonlocal-mono1} we consider the fractional Laplacian (defined by Fourier transform) as an operator 
\[
(-\Delta)^{s}:\ L^2(\mathbb{R}^{n})\to \mathcal{S}'(\mathbb{R}^{n}),
\]
The fractional Sobolev space is defined by 
\[
H^{s}(\mathbb{R}^{n}):=\{u\in L^{2}(\mathbb{R}^{n}):\ (-\Delta)^{s/2}u\in L^{2}(\mathbb{R}^{n})\}
\]
and equipped with the scalar product 
\[
\left(u,v\right)_{H^{s}(\mathbb{R}^{n})}:=\int_{\mathbb{R}^{n}}\left((-\Delta)^{s/2}u\cdot(-\Delta)^{s/2}v+uv\right)\dx
\quad \text{ for all }u,v\in H^{s}(\mathbb{R}^{n}).
\]
It can be shown that $H^{s}(\mathbb{R}^{n})$ is a Hilbert space, cf., e.g., \cite{di2012hitchhiks}. Let
\[
H_{0}^{s}(\Omega):=\text{closure of \ensuremath{C_{c}^{\infty}(\Omega)} in \ensuremath{H^{s}(\mathbb{R}^{n})}},
\]
and note that this space is sometimes denoted as $\widetilde{H}^{s}(\Omega)$ in the literature, e.g., \cite{ghosh2016calder,ghosh2017calder}.

We also define the bilinear form 
\begin{equation*}
\mathscr{B}_{q}(u,w):=\int_{\mathbb{R}^{n}}(-\Delta)^{s/2}u\cdot(-\Delta)^{s/2}w\dx+\int_{\Omega}quw\,\dx
\quad \text{ for } u,w\in H^{s}(\mathbb{R}^{n}).
\end{equation*}
Then, for any $f\in L^{2}(\Omega)$, $u\in H^{s}(\mathbb{R}^{n})$ solves (in the sense of distributions)
\[
(-\Delta)^{s}u+qu=f\quad\text{ in \ensuremath{\Omega}}
\]
if and only if $u\in H^{s}(\mathbb{R}^{n})$ fulfills the variational formulation
\begin{equation}\label{eq:VarFormFractional}
\mathscr{B}_{q}(u,w)=\int_{\Omega}fw\dx \quad\text{ for all }w\in H_{0}^{s}(\Omega),
\end{equation}
cf., e.g., \cite[Lemma~2.1]{harrach2019nonlocal-mono1}.

\subsection{The Dirichlet boundary value problem}\label{subsect:Dirichlet}

The Dirichlet trace operator on $\Omega_e:=\R^n\setminus \overline \Omega$ can be defined using abstract quotient spaces by setting
\[
\gamma_{\Omega_{e}}^{(D)}:\ H^{s}(\mathbb{R}^{n})\to H(\Omega_{e}):=H^{s}(\mathbb{R}^{n})/H_{0}^{s}(\Omega),\quad u\mapsto u+H_{0}^{s}(\Omega).
\]
Then, by definition, $\gamma_{\Omega_{e}}^{(D)}$ is surjective, 
$H_0^s(\Omega)=\{u\in H^{s}(\mathbb{R}^{n}):\ \gamma_{\Omega_{e}}^{(D)}u=0\}$. Moreover, for all $u,v\in H^{s}(\mathbb R^n)$, 
\begin{equation}\label{eq:Dirichlet_trace}
\gamma_{\Omega_{e}}^{(D)}u=\gamma_{\Omega_{e}}^{(D)}v\quad\text{ implies that }\quad u(x)=v(x)\quad\text{ for \ensuremath{x\in\Omega_{e}} a.e.,}
\end{equation}
cf., e.g., \cite[Lemma 2.2]{harrach2019nonlocal-mono1}. This implies that $\gamma_{\Omega_{e}}^{(D)}$ is an injective mapping from $C_c^\infty(\Omega_e)$
into $H(\Omega_{e})$. For the sake of readability we will write $u|_{\Omega_{e}}$ instead
of $\gamma^{(D)}_{\Omega_{e}}u$ throughout this work, and identify $C_c^\infty(\Omega_e)$ with its
image in $H(\Omega_{e})$. 

Throughout this work, we will use that for all $u,w\in H_0^s(\Omega)$
\[
\mathscr{B}_{q}(u,w)=\left( (I-\iota^*\iota +\iota^* M_q \iota)u,w\right)_{H^s_0(\Omega)}
\]
with the bounded linear operators
\begin{align*}
    I: & \ H_0^s(\Omega) \to H_0^s(\Omega),\\
    \iota: &\ H_0^s(\Omega)\to L^2(\Omega),\\
    M_{q}: & \ L^2(\Omega)\to L^2(\Omega),
\end{align*}
denoting the identity operator, the compact restriction and embedding,
cf.~\cite[Lemma 10]{palatucci2013local}, and the multiplication operator by $q$. 

We then have the following result on the solvability of the Dirichlet boundary value problem.
\begin{lemma}\label{lemma:solvability_Dirichlet_bvp}
Let $F\in H(\Omega_{e})$, $f\in L^{2}(\Omega)$, and
\[
N_{q}:=\{ u\in H_0^s(\Omega):\ (-\Delta)^{s}u+qu=0\quad\text{ in $\Omega$} \}.
\]
\begin{enumerate}[(a)]
\item $u\in H^{s}(\mathbb{R}^{n})$ solves the Dirichlet problem
\begin{align}\label{eq:Dirichlet_with_rhs}
(-\Delta)^{s}u+qu=f\quad\text{ in \ensuremath{\Omega}},\quad u|_{\Omega_{e}}=F,
\end{align}
if and only if $u=u^{(0)}+u^{(F)}$, where $u^{(F)}\in H^{s}(\mathbb{R}^{n})$
fulfills $u^{(F)}|_{\Omega_{e}}=F$, 
and $u^{(0)}\in H_{0}^{s}(\Omega)$ solves 
\[
\mathscr{B}_{q}(u^{(0)},w)=-\mathscr{B}_{q}(u^{(F)},w)+\int_{\Omega}fw\dx\quad\text{ for all }w\in H_{0}^{s}(\Omega).
\]

Note that for $F\in C_{c}^{\infty}(\Omega_{e})$ one can simply choose $u^{(F)}:=F$.
\item $N_{q}$ is finite-dimensional. The Dirichlet problem \eqref{eq:Dirichlet_with_rhs}
is solvable if and only if 
\begin{equation}\label{eq:solvability_condition}
\mathscr{B}_{q}(u^{(F)},w) = \int_{\Omega}fw\dx  \quad \text{ for all $w\in N_q$.}
\end{equation}
The solution $u\in H^{s}(\mathbb{R}^{n})$ of \eqref{eq:Dirichlet_with_rhs} is unique up to addition of a function in $N_q$,
and $u+N_q\in H^{s}(\mathbb{R}^{n}) / N_q$ depends linearly and continuously on $F\in H(\Omega_{e})$ and $f\in L^{2}(\Omega)$.
\end{enumerate}
\end{lemma}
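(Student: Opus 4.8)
The plan is to reduce everything to a Fredholm-alternative argument for the compact perturbation appearing in the variational formulation. By part~(a), solving \eqref{eq:Dirichlet_with_rhs} is equivalent to solving, for $u^{(0)}\in H_0^s(\Omega)$, the variational equation $\mathscr{B}_q(u^{(0)},w)=\ell(w)$ for all $w\in H_0^s(\Omega)$, where $\ell(w):=-\mathscr{B}_q(u^{(F)},w)+\int_\Omega fw\dx$ is a bounded linear functional on $H_0^s(\Omega)$ (using that $H_0^s(\Omega)\hookrightarrow L^2(\Omega)$ and $q\in L^\infty(\Omega)$), hence $\ell=(g,\cdot)_{H_0^s(\Omega)}$ for some $g\in H_0^s(\Omega)$ by Riesz representation. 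Using the identity $\mathscr{B}_q(u,w)=((I-\iota^*\iota+\iota^*M_q\iota)u,w)_{H_0^s(\Omega)}$ already recorded in the text, this is the operator equation $A_q u^{(0)}=g$ with $A_q:=I-K$, $K:=\iota^*\iota-\iota^*M_q\iota:H_0^s(\Omega)\to H_0^s(\Omega)$. The first key step is that $K$ is compact: this follows since $\iota$ is compact (the cited \cite[Lemma~10]{palatucci2013local}) and $\iota^*,M_q$ are bounded, so both $\iota^*\iota$ and $\iota^*M_q\iota$ are compact. Moreover $K$ is self-adjoint because $\mathscr{B}_q$ is symmetric, so $A_q=I-K$ is a self-adjoint Fredholm operator of index zero.

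The second step is to identify the kernel. We have $A_qu=0$ iff $\mathscr{B}_q(u,w)=0$ for all $w\in H_0^s(\Omega)$, which by the variational characterization \eqref{eq:VarFormFractional} (with $f=0$) is exactly $(-\Delta)^su+qu=0$ in $\Omega$ with $u\in H_0^s(\Omega)$, i.e.\ $\kernel(A_q)=N_q$. Since $A_q$ is Fredholm of index zero, $N_q$ is finite-dimensional, giving the first claim. Self-adjointness of $A_q$ then gives the orthogonal decomposition $H_0^s(\Omega)=N_q\oplus \range(A_q)$ with $\range(A_q)=N_q^{\perp_{H_0^s}}$ closed, so $A_qu^{(0)}=g$ is solvable iff $g\perp N_q$ in $H_0^s(\Omega)$, i.e.\ iff $(g,w)_{H_0^s(\Omega)}=\ell(w)=0$ for all $w\in N_q$; unwinding $\ell$ this is precisely the solvability condition \eqref{eq:solvability_condition}. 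When solvable, $u^{(0)}$ is unique modulo $N_q$, hence (since $u=u^{(0)}+u^{(F)}$ and $u^{(F)}$ is an arbitrary fixed lift of $F$, any two choices differing by an element of $H_0^s(\Omega)$ that gets absorbed into $u^{(0)}$) the full solution $u$ is unique modulo $N_q$.

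For the continuous dependence, the third step is to work on the quotient, or equivalently to invert $A_q$ on $N_q^{\perp}$: the restriction $A_q|_{N_q^\perp}:N_q^\perp\to N_q^\perp$ is a bijective bounded operator, hence boundedly invertible by the open mapping theorem, so $u^{(0)}_\perp:=(A_q|_{N_q^\perp})^{-1}(P_{N_q^\perp}g)$ depends linearly and continuously on $g$, and $g$ depends linearly and continuously on $(F,f)$ through $\ell$ (here one uses that a bounded lift $F\mapsto u^{(F)}$ exists because $\gamma_{\Omega_e}^{(D)}$ is a quotient map, e.g.\ the minimal-norm lift is linear and bounded). Passing to $H^s(\R^n)/N_q$ then yields that $u+N_q$ depends linearly and continuously on $(F,f)$; note this map is well-defined on the quotient since different lifts $u^{(F)}$ change $u^{(0)}$ by a compensating $H_0^s(\Omega)$-element, leaving $u+N_q$ unchanged up to elements of $N_q$.

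The main obstacle is the bookkeeping around the non-canonical lift $u^{(F)}$: one must check that the solvability condition \eqref{eq:solvability_condition}, the quotient $u+N_q$, and its continuous dependence are all independent of the choice of lift, and that a bounded linear lift exists. This is routine given that $H(\Omega_e)$ is defined as the quotient $H^s(\R^n)/H_0^s(\Omega)$ (take the orthogonal-complement representative), but it is the place where care is needed; the rest is the standard self-adjoint Fredholm alternative applied to $I-K$.
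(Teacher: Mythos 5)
Your proposal is correct and follows essentially the same route as the paper: reduce to the operator equation $(I-\iota^*\iota+\iota^*M_q\iota)u^{(0)}=g$ via Riesz representation, identify the kernel with $N_q$, and apply the Fredholm alternative for a compact self-adjoint perturbation of the identity. You spell out a few steps the paper leaves implicit (compactness of $K$, the orthogonal decomposition $H_0^s(\Omega)=N_q\oplus\range(A_q)$, bounded invertibility of $A_q|_{N_q^\perp}$ via the open mapping theorem, and the well-definedness bookkeeping around the lift $u^{(F)}$), but the underlying argument is the same.
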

\begin{proof}
(a) immediately follows from the variational formulation \eqref{eq:VarFormFractional}.

To prove (b), we use the Riesz representation theorem to obtain $v_f^F\in H_0^s(\Omega)$ fulfilling
\[
\left( v_f^F,w \right)_{H_0^s(\Omega)} = -\mathscr{B}_{q}(u^{(F)},w)+\int_{\Omega}fw\dx\quad\text{ for all }w\in H_{0}^{s}(\Omega).
\]

Using (a), and that $w\in H_0^s(\Omega)$ implies 
$w(x)=0$ for $x\in\Omega_{e}$ a.e., we obtain that $u\in H^{s}(\mathbb{R}^{n})$ solves \eqref{eq:Dirichlet_with_rhs} if and only if 
$u=u^{(0)}+u^{(F)}$ with $u^{(0)}\in H_{0}^{s}(\Omega)$ solving
\begin{align*}
\lefteqn{\quad \left( ( I - \iota^*\iota+\iota^*M_q \iota)u^{(0)},w\right)_{H_0^s(\Omega)}}\\
&= \mathscr{B}_{q}(u^{(0)},w)=-\mathscr{B}_{q}(u^{(F)},w)+\int_{\Omega}fw\dx
= \left( v_f^F,w \right)_{H_0^s(\Omega)} \quad\text{ for all }w\in H_{0}^{s}(\Omega),
\end{align*}
i.e. 
\[
 ( I - \iota^*\iota+\iota^*M_q \iota)u^{(0)} = v_f^F,
\]
and that 
\begin{align}\label{N_q}
	N_q= \kernel (I - \iota^*\iota+\iota^*M_q \iota).
\end{align} 
Here $\kernel(A)$ stands for the kernel of the linear operator $A$. Since $\iota^*\iota-\iota^*M_q \iota$ is compact and self-adjoint, Fredholm theory
(cf., e.g., \cite[Appendix D, Theorem~5]{evans2010partial}) yields that $N_q$ is finite-dimensional, and that 
\eqref{eq:Dirichlet_with_rhs} is solvable if and only if 
\[
\left( v_f^F ,  w \right)_{H_0^s(\Omega)} = 0 \quad \text{ for all $w\in \kernel (I - \iota^*\iota+\iota^*M_q \iota)=N_q$,}
\]
which gives the condition \eqref{eq:solvability_condition}. 

Clearly $u^{(0)}$ is unique up to addition of a function in $N_q$,
and $u^{(0)}+N_q$ depends linearly and continuously on $v_f^F\in H_0^s(\Omega)$. It
easily follows that $u=u^{(0)}+u^{(F)}$ is unique up to addition of a function in $N_q$,
and that $u+N_q\in H^{s}(\mathbb{R}^{n}) / N_q$ depends linearly and continuously on $F\in H(\Omega_{e})$ and $f\in L^{2}(\Omega)$.
\end{proof}

\begin{corollary}\label{cor:Dirichlet_bvp}
Let $H_q^s(\R^n)\subseteq H^s(\R^n)$ be the $H^s(\R^n)$-orthogonal complement of $N_q$, and 
\begin{align*}
H_q(\Omega_e):=\{ F\in H(\Omega_e):\ \mathscr{B}_{q}(u^{(F)},w) = 0  \quad \text{ for all $w\in N_q$}\}.
\end{align*}
Then the codimension of $H_q(\Omega_e)$ in $H(\Omega_e)$ is at most $\dim N_q$, and for all $F\in H_q(\Omega_e)$ there exists a unique solution $u\in H_q^s(\R^n)$ of the Dirichlet problem
\begin{align}\label{eq:Dirichlet_bvp_Solution_operator}
(-\Delta)^{s}u+qu=0\quad\text{ in \ensuremath{\Omega}},\quad u|_{\Omega_{e}}=F,
\end{align}
and that the solution operator
\[
S_q:\ H_q(\Omega_e)\to H_q^s(\R^n),\quad F \mapsto u, \quad \text{where $u$ solves \eqref{eq:Dirichlet_bvp_Solution_operator}},
\]
is linear and bounded.
\end{corollary}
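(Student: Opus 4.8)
The plan is to read off every assertion directly from Lemma~\ref{lemma:solvability_Dirichlet_bvp} with $f=0$, so that essentially no new work is required. First I would check that the condition defining $H_q(\Omega_e)$ does not depend on the choice of extension $u^{(F)}$: if $\tilde u^{(F)}\in H^s(\R^n)$ also has trace $F$, then $u^{(F)}-\tilde u^{(F)}\in H_0^s(\Omega)$, and since every $w\in N_q\subseteq H_0^s(\Omega)$ satisfies \eqref{eq:VarFormFractional} with $f=0$ and $\mathscr{B}_q$ is symmetric, $\mathscr{B}_q(u^{(F)}-\tilde u^{(F)},w)=\mathscr{B}_q(w,u^{(F)}-\tilde u^{(F)})=0$. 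Hence $H_q(\Omega_e)$ is a well-defined linear subspace of $H(\Omega_e)$. Since $H_0^s(\Omega)$ is closed in the Hilbert space $H^s(\R^n)$, I would then fix once and for all the bounded linear extension operator sending $F$ to its orthogonal-complement representative $u^{(F)}$, so that, together with boundedness of the bilinear form $\mathscr{B}_q$, each functional $\ell_w(F):=\mathscr{B}_q(u^{(F)},w)$ with $w\in N_q$ belongs to $H(\Omega_e)^*$.

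Next, writing $d:=\dim N_q<\infty$ (finite by Lemma~\ref{lemma:solvability_Dirichlet_bvp}(b)) and fixing a basis $w_1,\dots,w_d$ of $N_q$, the previous paragraph gives $H_q(\Omega_e)=\bigcap_{j=1}^{d}\kernel\ell_{w_j}$ with $\ell_{w_j}\in H(\Omega_e)^*$; hence $H_q(\Omega_e)$ is closed and its codimension in $H(\Omega_e)$ is at most $d=\dim N_q$. For the solvability statement I would apply Lemma~\ref{lemma:solvability_Dirichlet_bvp}(b) with $f=0$: there the solvability condition \eqref{eq:solvability_condition} is exactly $\mathscr{B}_q(u^{(F)},w)=0$ for all $w\in N_q$, i.e.\ $F\in H_q(\Omega_e)$. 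So for such $F$ the Dirichlet problem \eqref{eq:Dirichlet_bvp_Solution_operator} has a solution, unique up to addition of an element of $N_q$. Because $N_q$ is finite-dimensional, hence closed, and $H_q^s(\R^n)$ is its $H^s(\R^n)$-orthogonal complement, the orthogonal projection onto $H_q^s(\R^n)$ selects from each coset of $H^s(\R^n)/N_q$ a unique representative lying in $H_q^s(\R^n)$; this representative is the unique solution $u\in H_q^s(\R^n)$, and it defines the map $S_q$.

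Finally, Lemma~\ref{lemma:solvability_Dirichlet_bvp}(b) gives that $T\colon H_q(\Omega_e)\to H^s(\R^n)/N_q$, $F\mapsto u+N_q$, is linear and bounded. The inclusion $H_q^s(\R^n)\hookrightarrow H^s(\R^n)$ composed with the canonical quotient map is a bounded linear bijection $J\colon H_q^s(\R^n)\to H^s(\R^n)/N_q$ (injective since $H_q^s(\R^n)\cap N_q=\{0\}$, surjective since $H^s(\R^n)=H_q^s(\R^n)\oplus N_q$), whose inverse is bounded by the open mapping theorem; thus $S_q=J^{-1}\circ T$ is linear and bounded. I do not anticipate any genuine obstacle here: the only points that need a little attention are the independence of $H_q(\Omega_e)$ of the chosen extension $u^{(F)}$ and the identification of $H^s(\R^n)/N_q$ with the orthogonal complement $H_q^s(\R^n)$, and all the substantive content has already been established in Lemma~\ref{lemma:solvability_Dirichlet_bvp}.
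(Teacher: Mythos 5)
Your proposal is correct and follows essentially the same route as the paper: well-definedness of $H_q(\Omega_e)$ by noting the difference of two extensions lies in $H_0^s(\Omega)$ and pairs to zero against $N_q$ via \eqref{eq:VarFormFractional}; the codimension bound by realizing $H_q(\Omega_e)$ as the kernel of a bounded linear map into $\R^{\dim N_q}$; existence and uniqueness up to $N_q$ from Lemma~\ref{lemma:solvability_Dirichlet_bvp}(b) with $f=0$; and boundedness of $S_q$ via the isomorphism $H_q^s(\R^n)\cong H^s(\R^n)/N_q$. The only cosmetic differences are that the paper writes down the projection onto $H_q^s(\R^n)$ explicitly with an orthonormal basis of $N_q$ where you invoke the orthogonal projection abstractly, and the paper leaves the boundedness of the isomorphism's inverse implicit where you cite the open mapping theorem.
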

\begin{proof}
We first show that $H_q(\Omega_e)$ is well-defined. If $u^{(F)},\widetilde u^{(F)} \in H^s(\R^n)$ both fulfill $u^{(F)}|_{\Omega_e}=F=\widetilde u^{(F)}|_{\Omega_e}$, then $u^{(F)}-\widetilde u^{(F)}|_{\Omega_e}\in H_0^s(\Omega)$ and thus it follows from the definition of
$N_q$ \eqref{N_q} and \eqref{eq:VarFormFractional} that
\[
\mathscr{B}_{q}(u^{(F)}-\widetilde u^{(F)},w)=0 \quad \text{ for all } w\in N_q.
\]

Next, we show that the codimension of $H_q(\Omega_e)$ in $H(\Omega_e)$ is at most $d:=\dim N_q$. Let $(w_1,\ldots,w_d)\subset N_q$ be an 
orthonormal basis of $N_q$, and let $\gamma^{-}:\ H(\Omega_e)\to H^s(\R^n)$ be a linear right inverse of the Dirichlet trace operator $\gamma_{\Omega_{e}}^{(D)}$. Then, by linearity,
\begin{align*}
H_q(\Omega_e)=\{ F\in H(\Omega_e):\ \mathscr{B}_{q}(\gamma^{-} F,w_j) = 0  \quad \text{ for all $j=1,\ldots,d$}\}
=\kernel(\mathcal A),
\end{align*}
with a linear operator
\[
\mathcal A:\ H(\Omega_e)\to \R^d,\quad  F\mapsto \left( \mathscr{B}_{q}(\gamma^{-} F,w_j)  \right)_{j=1,\ldots,d}.
\]
Hence, the codimension of $H_q(\Omega_e)=\kernel(\mathcal A)$ is $\dim(\range(\mathcal A))\leq d$.

Finally, it follows from Lemma~\ref{lemma:solvability_Dirichlet_bvp}(b) that \eqref{eq:Dirichlet_bvp_Solution_operator}
possesses a solution $\widetilde u\in H^s(\R^n)$ which is unique up to addition of a function in $N_q$. Hence,
\[
u:=\widetilde u - \sum_{j=1}^d  w_j (\widetilde u,w_j)_{H^s(\R^n)} \in H_q^s(\R^n)
\]
solves \eqref{eq:Dirichlet_bvp_Solution_operator}, and $H_q^s(\R^n)$ contains no other solutions
of \eqref{eq:Dirichlet_bvp_Solution_operator}. Since $H_q^s(\R^n)$ is isomorphic to $H^s(\R^n)/N_q$,
the continuity and linearity of the solution operator $S_q$ also follow from Lemma~\ref{lemma:solvability_Dirichlet_bvp}(b). 
\end{proof}

\subsection{Neumann traces and the Dirichlet-to-Neumann operator}

We define the Neumann trace operator 
\[
\gamma_{\Omega_e}^{(N)}:\ H^{s}_\Delta(\mathbb{R}^{n}):=
\left\{ u\in H^{s}(\mathbb{R}^{n}):\ \exists f\in L^2(\Omega) \text{ with } (-\Delta)^{s}u=f\text{ in $\Omega$}\right\}
\to H(\Omega_{e})^{*}
\]
by setting
\begin{equation}\label{eq:Neumanntrace}
\left\langle \gamma_{\Omega_e}^{(N)}u,F\right\rangle :=\int_{\R^n} (-\Delta)^{s/2} u \cdot (-\Delta)^{s/2} v^{(F)} \dx
- \int_\Omega (-\Delta)^{s}u \cdot v^{(F)} \dx,
\end{equation}
where $v^{(F)}\in H^{s}(\mathbb{R}^{n})$ fulfills $v^{(F)}|_{\Omega_{e}}=F$, $H(\Omega_e)^*$ is the dual space of $H(\Omega_e)$, and throughout this paper $\left\langle \cdot,\cdot\right\rangle$ denotes the dual pairing on $H(\Omega_{e})^{*}\times H(\Omega_{e})$.
Note that $\gamma_{\Omega_e}^{(N)}u$ is well-defined since the right hand side of \eqref{eq:Neumanntrace} does not depend on the choice of $v^{(F)}$,
and that $\gamma_{\Omega_e}^{(N)}$ is a bounded linear operator.

For the sake of readability, we also use the formal notation $(-\Delta)^{s}u|_{\Omega_{e}}:=\gamma_{\Omega_e}^{(N)}u$ 
for the Neumann trace, which can be motivated by the following lemma, see also \cite[Remark 2.4]{harrach2019nonlocal-mono1} and \cite{ghosh2016calder} for further justifications of this notation under additional smoothness conditions on $u$ or $\Omega$. 

\begin{lemma}\label{lemma:Neumann_trace}
Let $u\in H^{s}_\Delta(\mathbb{R}^{n})$. If $\gamma_{\Omega_e}^{(N)}u\in L^2(\Omega)$ in the sense that there exists $g\in L^2(\Omega_e)$ with
\[
\left\langle \gamma_{\Omega_e}^{(N)}u,F\right\rangle=\int_{\Omega_e} g v^{(F)} \dx \quad \text{ for all }
v^{(F)}\in H^s(\R^n) \text{ with } v^{(F)}|_{\Omega_e}=F,
\]
then 
$g=(-\Delta)^s u$ in $\Omega_e$ (in the sense of distributions).
\end{lemma}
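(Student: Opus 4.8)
The plan is to test the defining identity \eqref{eq:Neumanntrace} for the Neumann trace against test functions supported in $\Omega_e$ and compare with the hypothesis on $g$. Concretely, let $\varphi\in C_c^\infty(\Omega_e)$ be arbitrary and view it as an element of $H^s(\R^n)$, so that $F:=\varphi|_{\Omega_e}$ has $v^{(F)}:=\varphi$ as an admissible representative with $v^{(F)}|_{\Omega_e}=F$. Since $\varphi$ vanishes on a neighborhood of $\overline\Omega$, we have $\varphi|_\Omega=0$, hence the second term in \eqref{eq:Neumanntrace}, namely $\int_\Omega (-\Delta)^s u\cdot v^{(F)}\dx$, vanishes. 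Therefore \eqref{eq:Neumanntrace} gives
\[
\left\langle \gamma_{\Omega_e}^{(N)}u,\varphi|_{\Omega_e}\right\rangle
= \int_{\R^n}(-\Delta)^{s/2}u\cdot(-\Delta)^{s/2}\varphi\dx.
\]
On the other hand, the hypothesis applied with this same $v^{(F)}=\varphi$ yields $\left\langle \gamma_{\Omega_e}^{(N)}u,\varphi|_{\Omega_e}\right\rangle = \int_{\Omega_e} g\varphi\dx = \int_{\R^n} g\varphi\dx$, the last equality because $g$ is extended by zero outside $\Omega_e$ (or simply because $\varphi$ is supported in $\Omega_e$; one should be slightly careful here that $\Omega_e = \R^n\setminus\overline\Omega$ so $\varphi$ supported in $\Omega_e$ means $\mathrm{supp}\,\varphi\cap\overline\Omega=\emptyset$).

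Combining the two displays, for every $\varphi\in C_c^\infty(\Omega_e)$,
\[
\int_{\R^n}(-\Delta)^{s/2}u\cdot(-\Delta)^{s/2}\varphi\dx = \int_{\Omega_e} g\varphi\dx.
\]
The left-hand side is exactly the action of the distribution $(-\Delta)^s u$ on $\varphi$: indeed, since $u\in H^s(\R^n)$ one has $(-\Delta)^{s/2}u\in L^2(\R^n)$, and for $\varphi\in C_c^\infty(\R^n)$ the self-adjointness of $(-\Delta)^{s/2}$ on the Fourier side gives $\langle (-\Delta)^s u,\varphi\rangle = \int_{\R^n}(-\Delta)^{s/2}u\cdot(-\Delta)^{s/2}\varphi\dx$. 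Hence $\langle (-\Delta)^s u,\varphi\rangle = \int_{\Omega_e}g\varphi\dx$ for all $\varphi\in C_c^\infty(\Omega_e)$, which is precisely the statement that $(-\Delta)^s u = g$ in $\Omega_e$ in the sense of distributions.

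The main point requiring care — rather than a deep obstacle — is the interplay between $\Omega_e=\R^n\setminus\overline\Omega$ and the earlier definition $\Omega_e:=\R^n\setminus\Omega$ used in the introduction; I would work consistently with the open set $\R^n\setminus\overline\Omega$ used from Section \ref{Section 2} onward, so that $C_c^\infty(\Omega_e)$ consists of smooth functions vanishing near $\overline\Omega$ and the restriction $\varphi|_\Omega=0$ is genuine, not merely a.e. A second routine point is justifying the integration-by-parts identity $\langle(-\Delta)^s u,\varphi\rangle=((-\Delta)^{s/2}u,(-\Delta)^{s/2}\varphi)_{L^2}$, which follows from the Fourier-transform definition of $(-\Delta)^s$ as a map $L^2(\R^n)\to\mathcal S'(\R^n)$ together with Plancherel's theorem; this is standard (cf.\ the references already cited, e.g.\ \cite{di2012hitchhiks}) and I would simply invoke it. No localized-potentials or monotonicity machinery is needed here; the lemma is purely a consistency check on the formal notation $(-\Delta)^s u|_{\Omega_e}$.
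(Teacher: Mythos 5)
Your proof is correct and follows essentially the same route as the paper: test the defining identity \eqref{eq:Neumanntrace} against $\varphi\in C_c^\infty(\Omega_e)$, note that the $\int_\Omega(-\Delta)^s u\cdot\varphi\dx$ term vanishes since $\varphi$ is supported away from $\overline\Omega$, and identify the remaining term $\int_{\R^n}(-\Delta)^{s/2}u\cdot(-\Delta)^{s/2}\varphi\dx$ with the distributional pairing $\langle(-\Delta)^s u,\varphi\rangle$ on $\Omega_e$. Your additional remark about the $\R^n\setminus\Omega$ versus $\R^n\setminus\overline\Omega$ discrepancy and the Plancherel justification are careful observations, but the substance of the argument matches the paper's.
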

\begin{proof}
For all $\varphi\in C_c^\infty(\Omega_e)\subseteq H(\Omega_e)$ (cf.\ subsection \ref{subsect:Dirichlet}), we have that
\begin{align*}
\int_\Omega g \varphi \dx&= \left\langle \gamma_{\Omega_e}^{(N)}u,G\right\rangle
= \int_{\R^n} (-\Delta)^{s/2} u \cdot (-\Delta)^{s/2} \varphi \dx
- \int_\Omega (-\Delta)^{s}u \cdot \varphi \dx\\
&= \int_{\R^n} (-\Delta)^{s/2} u \cdot (-\Delta)^{s/2} \varphi \dx
= \langle (-\Delta)^s u, \varphi\rangle_{\mathcal D'(\Omega_e)\times \mathcal D(\Omega_e)}.
\end{align*}
\end{proof}

Note also that if 
$u\in H^s(\R^n)$ solves $(-\Delta)^{s}u+qu=0$ in $\Omega$, then 
\[
\left\langle (-\Delta)^{s}u|_{\Omega_{e}},G\right\rangle =\mathscr{B}_{q}(u,v^{(G)})
\]
holds for all $G\in H(\Omega_e)$ and all $v^{(G)}\in H^s(\R^n)$ with $v^{(G)}|_{\Omega_e}=G$.
Using Corollary~\ref{cor:Dirichlet_bvp}, we can thus define the linear bounded DtN operator
\[
\Lambda(q):\ H_q(\Omega_e)\to H(\Omega_e)^*,\quad F\mapsto (-\Delta)^{s}u|_{\Omega_{e}}
\]
where $u\in H_q^s(\R^n)$ solves 
\begin{align*}
(-\Delta)^{s}u+qu=0\quad\text{ in \ensuremath{\Omega}},\quad u|_{\Omega_{e}}=F.
\end{align*}

In view of the following sections, note that for $q_1,q_2\in L^\infty(\Omega)$, 
\[
H_{q_1,q_2}(\Omega_e):=H_{q_1}(\Omega_e)\cap H_{q_2}(\Omega_e)
\]
is a subspace of $H(\Omega_e)$ with codimension less than or equal to
$\dim N_{q_1}+\dim N_{q_2}$, on which both $\Lambda(q_1)$ and $\Lambda(q_2)$ are defined. Hence, throughout this work,
$\Lambda(q_1)-\Lambda(q_2)$ will always denote the linear bounded operator
\[
\Lambda(q_1)-\Lambda(q_2):\ H_{q_1,q_2}(\Omega_e)\to H(\Omega_e)^*.
\]

The following relation between the DtN operator and the bilinear form will be useful.
\begin{lemma}\label{lemma:NtD_and_bilinear_forms}
Let $q_1,q_2\in L^\infty(\Omega)$, $F\in H_{q_1}(\Omega_e)$, $G\in H_{q_2}(\Omega_e)$, and let 
$u\in H_{q_1}^s(\R^n)$, $v\in H_{q_2}^s(\R^n)$ solve 
\begin{align*}
(-\Delta)^{s}u+q_1 u=0\quad\text{ in \ensuremath{\Omega}},\quad u|_{\Omega_{e}}=F,\\
(-\Delta)^{s}v+q_2 v=0\quad\text{ in \ensuremath{\Omega}},\quad v|_{\Omega_{e}}=G.
\end{align*}
Then
\[
\left\langle \Lambda(q_1) F, F \right\rangle= \mathscr{B}_{q_1}(u,u) \quad \text{ and } \quad
\left\langle \Lambda(q_1) F, G \right\rangle= \mathscr{B}_{q_1}(u,v),
\]
and under the additional restriction that $F,G\in H_{q_1,q_2}(\Omega_e)$ this also implies that
\[
\left\langle \left( \Lambda(q_1) - \Lambda(q_2)\right) F, G \right\rangle= \mathscr{B}_{q_1}(u,v)-\mathscr{B}_{q_2}(u,v)=\int_\Omega (q_1-q_2)uv\dx.
\]
\end{lemma}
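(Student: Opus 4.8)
The plan is to expand the dual pairings using the characterization of the Neumann trace given just before the lemma, namely that $\left\langle (-\Delta)^s u|_{\Omega_e}, G \right\rangle = \mathscr{B}_{q_1}(u, v^{(G)})$ for any $v^{(G)} \in H^s(\R^n)$ with $v^{(G)}|_{\Omega_e} = G$. First I would prove the identity $\left\langle \Lambda(q_1) F, G \right\rangle = \mathscr{B}_{q_1}(u,v)$: since $v \in H_{q_2}^s(\R^n)$ solves the Dirichlet problem with exterior data $G$, it is in particular a valid choice of extension $v^{(G)}$, so applying the Neumann-trace identity with this $v$ gives the claim directly. Taking $G = F$ and $v = u$ (which is legitimate because $F \in H_{q_1,q_2}(\Omega_e) \subseteq H_{q_1}(\Omega_e)$ in the last part, or simply $F \in H_{q_1}(\Omega_e)$ in the first part) yields $\left\langle \Lambda(q_1) F, F \right\rangle = \mathscr{B}_{q_1}(u,u)$.

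For the last assertion, under the restriction $F, G \in H_{q_1,q_2}(\Omega_e)$ both operators $\Lambda(q_1)$ and $\Lambda(q_2)$ are defined on these arguments. By the first part applied to $q_1$ with the pair $(u,v)$, I get $\left\langle \Lambda(q_1) F, G \right\rangle = \mathscr{B}_{q_1}(u,v)$. By the first part applied symmetrically to $q_2$ — here I use that $v \in H_{q_2}^s(\R^n)$ solves the $q_2$-equation with exterior data $G$ and that $u \in H_{q_1}^s(\R^n)$ with $u|_{\Omega_e} = F$ is an admissible extension of $F$ — I get $\left\langle \Lambda(q_2) G, F \right\rangle = \mathscr{B}_{q_2}(v,u) = \mathscr{B}_{q_2}(u,v)$ by symmetry of the bilinear form. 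Subtracting and using that $\left\langle \Lambda(q_2) G, F \right\rangle = \left\langle \Lambda(q_2) F, G \right\rangle$ (symmetry of the DtN operator, which follows from the symmetry of $\mathscr{B}_{q_2}$) gives $\left\langle (\Lambda(q_1) - \Lambda(q_2)) F, G \right\rangle = \mathscr{B}_{q_1}(u,v) - \mathscr{B}_{q_2}(u,v)$.

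Finally, the difference of the two bilinear forms telescopes: $\mathscr{B}_{q_1}(u,v) - \mathscr{B}_{q_2}(u,v) = \int_\Omega (q_1 - q_2) u v \dx$, since the nonlocal principal parts $\int_{\R^n}(-\Delta)^{s/2}u \cdot (-\Delta)^{s/2}v \dx$ are identical in both and only the potential terms $\int_\Omega q_i u v \dx$ differ. This last step is a one-line cancellation directly from the definition of $\mathscr{B}_q$.

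I do not anticipate a serious obstacle here; the only point requiring a little care is the bookkeeping of which extension is used where — specifically, justifying that the solution $v$ of the $q_2$-problem may be plugged into the Neumann-trace formula for $\Lambda(q_1)$ as the extension $v^{(G)}$ (legitimate because that formula holds for \emph{any} $H^s(\R^n)$-extension of $G$, not just a PDE solution), and symmetrically for the computation involving $\Lambda(q_2)$. One should also note at the outset that all arguments lie in the correct domains: $F \in H_{q_1}(\Omega_e)$ for the first identity, and $F, G \in H_{q_1,q_2}(\Omega_e)$ for the last, so that both $S_{q_1}$ and $S_{q_2}$ and hence both $\Lambda(q_1)$ and $\Lambda(q_2)$ are available on the relevant data.
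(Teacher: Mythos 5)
Your proof is correct and follows essentially the same route as the paper: expand the dual pairings using the Neumann-trace identity $\langle (-\Delta)^s u|_{\Omega_e}, G\rangle = \mathscr{B}_q(u, v^{(G)})$, exploiting the freedom to choose the extension $v^{(G)}$ of the exterior data. The only extra ingredient you supply explicitly (symmetry of $\Lambda(q_2)$) is itself an immediate consequence of the symmetry of $\mathscr{B}_{q_2}$ together with the same extension freedom, so this matches the paper's terse appeal to ``the variational formulation and the definition of the Neumann trace.''
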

\begin{proof}
This immediately follows from the variational formulation in Lemma~\ref{lemma:solvability_Dirichlet_bvp} and the definition of the Neumann trace.
\end{proof}

\subsection{Unique continuation from open sets and Cauchy data}\label{subsect:UCP}

We recall the unique continuation result from Ghosh, Salo and Uhlmann \cite{ghosh2016calder}:

\begin{theorem}\cite[Theorem 1.2]{ghosh2016calder} \label{thm:unique_continuation}
Let $n\in \N$, and $0<s<1$. If $u\in H^{r}(\mathbb{R}^{n})$ for
some $r\in\mathbb{R}$, and both $u$ and $(-\Delta)^{s}u$ vanish
in the same arbitrary non-empty open set in $\mathbb R^n$, then $u\equiv 0$ in $\mathbb{R}^{n}$. 
\end{theorem}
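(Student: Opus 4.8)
The statement is the unique continuation property of Ghosh, Salo and Uhlmann \cite{ghosh2016calder}, and the plan is to reproduce their Caffarelli--Silvestre extension argument. The idea is to trade the nonlocal hypothesis ``$u$ and $(-\Delta)^s u$ both vanish on a nonempty open set $V\subset\R^n$'' for a purely local Cauchy data condition for a degenerate elliptic equation in one additional variable, and then invoke a unique continuation principle for that equation.

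First I would introduce the Caffarelli--Silvestre extension $w=w(x,y)$ of $u$ on $\R^{n+1}_+:=\R^n\times(0,\infty)$ (defined for any $r\in\R$ via the associated Poisson kernel), which solves, in the $y^{1-2s}$-weighted energy space,
\[
\mathrm{div}\big(y^{1-2s}\nabla w\big)=0 \ \text{ in } \R^{n+1}_+, \qquad w(\cdot,0)=u .
\]
The facts I would use are that $w(\cdot,y)\to u$ as $y\to 0^+$ and that $-c_s\lim_{y\to0^+}y^{1-2s}\partial_y w(\cdot,y)=(-\Delta)^s u$ for a constant $c_s>0$. Hence, fixing $x_0\in V$, the extension $w$ has vanishing Dirichlet trace and vanishing weighted conormal derivative on the flat piece $V\times\{0\}$.

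Next I would reflect $w$ evenly, $\widetilde w(x,y):=w(x,|y|)$. Since the weighted conormal derivative vanishes on $V\times\{0\}$, the even reflection creates no singular part across $\{y=0\}$, so $\widetilde w$ is a genuine weak solution of $\mathrm{div}(|y|^{1-2s}\nabla\widetilde w)=0$ in a full ball $B\subset\R^{n+1}$ centered at $(x_0,0)$; moreover $\widetilde w$ and all its tangential derivatives vanish on $V\times\{0\}$, its $y$-derivative vanishes there by evenness, and iterating with the equation shows $\widetilde w$ vanishes to infinite order at $(x_0,0)$.

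The hard part is a unique continuation principle for the degenerate operator $\mathrm{div}(|y|^{1-2s}\nabla\,\cdot\,)$: since $1-2s\in(-1,1)$, the weight $|y|^{1-2s}$ is a Muckenhoupt $A_2$ weight, and one needs the Almgren-type frequency function monotonicity (hence a doubling inequality) for this operator, which forces a solution vanishing to infinite order at an interior point to vanish in a neighborhood. Granting this, $\widetilde w\equiv0$ in $B$, so $w$ vanishes on a nonempty open subset of $\R^{n+1}_+$; away from $\{y=0\}$ the extension equation is uniformly elliptic with coefficients smooth (indeed real-analytic) in $y$, so standard unique continuation together with the connectedness of $\R^{n+1}_+$ yields $w\equiv0$ there, whence $u=w(\cdot,0)\equiv0$ in $\R^n$. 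I expect the weighted unique continuation/doubling estimate to be the only genuinely delicate ingredient, while the extension construction, the reflection, and the propagation away from $\{y=0\}$ are comparatively routine.
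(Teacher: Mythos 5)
Your proposal reproduces the Caffarelli--Silvestre extension argument of Ghosh, Salo, and Uhlmann, which is indeed how \cite[Theorem 1.2]{ghosh2016calder} is proved; the present paper does not supply a proof of its own but simply cites that theorem. The main steps you list (extension to $\R^{n+1}_+$, vanishing Dirichlet and weighted conormal data on a flat piece $V\times\{0\}$, even reflection in $y$, strong unique continuation for the $A_2$-weighted degenerate operator, and propagation off $\{y=0\}$ by ordinary elliptic unique continuation) are the same as in the cited source, and you correctly identify the weighted strong unique continuation / doubling estimate for $\mathrm{div}(|y|^{1-2s}\nabla\,\cdot\,)$ as the only genuinely delicate ingredient.

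The one point you treat too lightly is the regularity. The hypothesis is $u\in H^r(\R^n)$ for an \emph{arbitrary} $r\in\R$, and for $r$ well below $s$ the Poisson extension does not land in the $y^{1-2s}$-weighted energy space $\dot H^1(\R^{n+1}_+,y^{1-2s})$, so the variational Dirichlet-trace and conormal-derivative identities you invoke, as well as the $H^1_{\mathrm{loc}}$ framework needed to apply the weighted unique continuation principle, are not immediate. One standard fix is to mollify: if $u$ and $(-\Delta)^s u$ vanish on an open set $V$, then $u*\phi_\epsilon$ and $(-\Delta)^s(u*\phi_\epsilon)=((-\Delta)^s u)*\phi_\epsilon$ vanish on the slightly smaller (still nonempty, for small $\epsilon$) open set $\{x:\mathrm{dist}(x,\R^n\setminus V)>\epsilon\}$; since $u*\phi_\epsilon\in H^\infty(\R^n)$, the extension argument applies and gives $u*\phi_\epsilon\equiv 0$, whence $u\equiv 0$. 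The phrase ``defined for any $r\in\R$ via the associated Poisson kernel'' is asserting more than it establishes; this reduction (or an equivalent distributional treatment of the extension) needs to be made explicit. With that in place, the outline is sound.
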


We will make use of the following simple corollary.
\begin{corollary}\label{cor:UCP}
Let $u\in H^s(\R^n)$ solve $(-\Delta)^s u + qu =f$ in $\Omega$, with $f\in L^2(\Omega)$
\begin{enumerate}[(a)]
\item If $u$ and $f$ vanish in the same nonempty open set $\mathcal O \subset \Omega$, then $u\equiv 0 $ in $\mathbb R^n$.
\item If $u|_{\Omega_e}=0$ and $(-\Delta)^{s}u|_{\Omega_{e}}=0$, then $u\equiv 0$ in $\mathbb R^n$.
\end{enumerate}
\end{corollary}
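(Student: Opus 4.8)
The plan is to derive both statements from the unique continuation theorem of Ghosh, Salo and Uhlmann (Theorem~\ref{thm:unique_continuation}) by exhibiting an open set on which \emph{both} $u$ and $(-\Delta)^s u$ vanish, which then forces $u\equiv 0$ on all of $\R^n$.

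For part (a), the hypothesis gives a nonempty open set $\mathcal O\subset\Omega$ on which $u=0$ and $f=0$. Since $u$ solves $(-\Delta)^s u+qu=f$ in $\Omega$, we have $(-\Delta)^s u=f-qu$ in $\Omega$ (as an $L^2(\Omega)$ identity, using that $q\in L^\infty(\Omega)$ and $u\in L^2$). Restricting this identity to $\mathcal O$ and using $f|_{\mathcal O}=0$ and $u|_{\mathcal O}=0$, we get $(-\Delta)^s u=0$ a.e.\ in $\mathcal O$, hence in the sense of distributions on $\mathcal O$. Thus $u\in H^s(\R^n)\subset H^r(\R^n)$ (with $r=s$) and both $u$ and $(-\Delta)^s u$ vanish in the nonempty open set $\mathcal O$, so Theorem~\ref{thm:unique_continuation} applies and $u\equiv 0$ in $\R^n$.

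For part (b), the vanishing set is instead an open subset of the exterior $\Omega_e=\R^n\setminus\overline\Omega$ (which is nonempty since $\Omega$ is bounded). The hypothesis $u|_{\Omega_e}=0$, interpreted via the Dirichlet trace and \eqref{eq:Dirichlet_trace}, means $u(x)=0$ for a.e.\ $x\in\Omega_e$, i.e.\ $u=0$ in $\Omega_e$. The hypothesis $(-\Delta)^s u|_{\Omega_e}=0$ means $\gamma^{(N)}_{\Omega_e}u=0$ as an element of $H(\Omega_e)^*$; by Lemma~\ref{lemma:Neumann_trace} (applied with $g=0\in L^2(\Omega_e)$) this yields $(-\Delta)^s u=0$ in $\Omega_e$ in the sense of distributions. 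Hence $u$ and $(-\Delta)^s u$ both vanish on the nonempty open set $\Omega_e$, and Theorem~\ref{thm:unique_continuation} again gives $u\equiv 0$ in $\R^n$.

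The routine points to be careful about are: first, checking that $u\in H^s_\Delta(\R^n)$ in part (b) so that the Neumann trace and Lemma~\ref{lemma:Neumann_trace} are even applicable --- this follows since $u$ solves $(-\Delta)^s u+qu=f$ in $\Omega$ with $f\in L^2(\Omega)$, so $(-\Delta)^s u=f-qu\in L^2(\Omega)$ in $\Omega$; and second, being careful in part (b) that the hypothesis $\gamma^{(N)}_{\Omega_e}u=0$ gives exactly the representation in the statement of Lemma~\ref{lemma:Neumann_trace} with $g=0$. Beyond these bookkeeping checks, the only real ingredient is the strong unique continuation property, so I do not expect any genuine obstacle; the corollary is essentially a translation of Theorem~\ref{thm:unique_continuation} into the two situations (interior vanishing data, and vanishing exterior Cauchy data) that the later sections actually use.
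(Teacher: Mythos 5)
Your proof is correct and follows essentially the same route as the paper: part (a) uses the equation to deduce $(-\Delta)^s u=0$ on $\mathcal O$ where $u$ and $f$ vanish, and part (b) uses \eqref{eq:Dirichlet_trace} and Lemma~\ref{lemma:Neumann_trace} to turn the vanishing Cauchy data into genuine vanishing of $u$ and $(-\Delta)^s u$ on the open set $\Omega_e$, so that Theorem~\ref{thm:unique_continuation} applies in both cases. Your additional bookkeeping remark that $u\in H^s_\Delta(\R^n)$ (since $(-\Delta)^s u = f - qu\in L^2(\Omega)$) is a sensible explicit check that the paper leaves implicit.
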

\begin{proof}
(a) follows since $u=0$ in $\mathcal O$, and $(-\Delta)^s u + qu =0$ in $\mathcal O$, implies 
$(-\Delta)^s u=0$ in $\mathcal O$. For (b) note that $u|_{\Omega_e}$ and $(-\Delta)^{s}u|_{\Omega_{e}}$ are only formal notations
for the Dirichlet and Neumann traces of $u$, but 
$u|_{\Omega_e}=0$, and $(-\Delta)^{s}u|_{\Omega_{e}}=0$
do imply that
\[
u=0 \quad \text{ in $\Omega_e$,} \quad \text{ and } \quad (-\Delta)^{s}u=0\quad \text{ in $\Omega_e$}
\]
in the sense of distributions by \eqref{eq:Dirichlet_trace} and Lemma \ref{lemma:Neumann_trace}. Hence, both cases follow from Theorem \ref{thm:unique_continuation}.
\end{proof}

\begin{remark}
When $\frac{1}{4}\leq s <1$, then the unique continuation property in Corollary~\ref{cor:UCP}(a) already holds under the weaker condition
that $u$ vanishes in a subset of $\Omega$ with positive measure, cf.\ \cite[Proposition 5.1]{ghosh2018uniqueness}.
Moreover, based on such property, \cite{ghosh2018uniqueness} shows global uniqueness for the fractional Schr\"odinger equation by a single measurement.
\end{remark}

\section{Monotonicity relations and localized potentials\label{Section 3}}

In this section we derive monotonicity relations between $L^\infty(\Omega)$ potentials and their associated DtN operators,
and show how to control the energy terms in the monotonicity relations with the technique of localized potentials.

\subsection{Monotonicity relations}

We characterize the monotonicity relations between DtN operators with an extended \emph{Loewner order} 
that holds up to finite dimensional subspaces. 

\begin{definition}\label{def:generalized_Loewner}
Let $H$ be a Hilbert space and $H_1,H_2\subseteq H$ be two subspaces of finite codimension, and let 
$L_1:\ H_1\to H$, $L_2:\ H_2\to H$ be two linear bounded operators.
For a number $d\in \N_0:=\N \cup\{0\}$ we write
\[
L_1\leq_{d} L_2 
\]
if there exists a subspace $W\subseteq H_{12}:=H_1\cap H_2$ with $\dim(W)\leq d$, and
\[
\langle (L_2-L_1)v,v\rangle \geq 0\quad \text{ for all } \quad v\in W^\perp\subseteq H_{12}.
\]
Here and in the following, we use the notation $W^\perp\subseteq H_{12}$ to indicate that the orthogonal complement is taken in $H_{12}$.

We write $L_1\leq L_2$ if $L_1\leq_{0} L_2$, and $L_1\leq_{\text{fin}} L_2$ if $L_1\leq_{d} L_2$ for some $d\in \N_0$.
We also write
\[
L_1\stackrel{\text{fin}}{=} L_2 \quad \text{ if } \quad L_1\leq_{\text{fin}} L_2, \quad \text{ and } \quad L_2\leq_{\text{fin}} L_1,
\]
i.e. if there exists a finite dimensional subspace $W\subseteq H_{12}$ so that 
\[
\langle (L_2-L_1)v,v\rangle = 0 \quad \text{ for all } v\in W^\perp\subseteq H_{12}.
\]
\end{definition}

Note that if $H_1=H_2=H$ and $L_1,L_2$ are self-adjoint and compact, this is the same extended Loewner order as in \cite{harrach2018helmholtz}. 

Let us stress that the binary relation $\leq_d$ is reflexive, but generally neither transitive, nor antisymmetric.
Obviously, $L_1\leq_{d_1} L_2$ and $L_2\leq_{d_2} L_3$ imply that $L_1\leq_{d} L_3$, with $d=d_1+d_2+\mathrm{codim}(H_2)$, so that $\leq_{\text{fin}}$ is 
a reflexive and transitive relation, i.e., a preorder. Moreover, Corollaries~\ref{cor:Calderon} and \ref{cor:Calderon_linearized} will show that $\leq_{\text{fin}}$ is antisymmetric on the set of NtD operators and on their linearizations around a fixed non-resonant potential, so that on these sets, $\leq_{\text{fin}}$ is a partial order. 

For two potentials $q_1,q_2\in L^\infty(\Omega)$ we write $q_1\leq q_2$ if $q_1(x)\leq q_2(x)$ for almost everywhere (a.e.) $x\in \Omega$.
We will show that increasing the potential $q$ in this sense increases
the DtN map $\Lambda(q)$ in the sense of the generalized Loewner order in Definition \ref{def:generalized_Loewner}. 
Note that monotonicity relations in inverse coefficient problems go back to the works of Ikehata, Kang, Seo, and Sheen \cite{ikehata1998size,kang1997inverse},
and they have been at the core of many reconstruction algorithms including the Factorization method and the Monotonicity method, cf.\ the list of references in the introduction. Extensions of monotonicity relations to subspaces of finite codimensions have first been studied in \cite{harrach2018helmholtz,griesmaier2018monotonicity}, and we follow the general approach from there. A sharper bound on the 
dimension of the excluded subspaces has recently been obtained for the standard Helmholtz equation in \cite{harrach2019dimension}.

\begin{definition}\label{def:dimension_d}
For $q\in L^\infty(\Omega)$ let $d(q)\in \N_0$ denote the number of eigenvalues (counted with multiplicity) of the
compact self-adjoint operator $\iota^* \iota - \iota^* M_q \iota$ that are greater than $1$.
\end{definition}

\begin{theorem}[Monotonicity relations]\label{Theorem for monotonicity} 
Let $q_{1},q_{2}\in L^{\infty}(\Omega)$. There exists a subspace $V\subseteq H_{q_1,q_2}(\Omega_e)$
with $\dim(V)\leq d(q_2)$ so that 
\begin{align}\label{eq:monotonicity}
\Langle \left(\Lambda(q_1)-\Lambda(q_2)\right) F, F \Rangle 
\geq \int_\Omega (q_1-q_2)|u_1|^2 \dx \quad \text{ for all } F\in V^\perp\subseteq H_{q_1,q_2}(\Omega_e),
\end{align}
where $u_1\in H_{q_1}^s(\R^n)$ solves 
  $(-\Delta)^{s} u_1+q_1 u_1=0$ in $\Omega$ with $u_1|_{\Omega_{e}}=F$.

Hence
\begin{align*}
q_1\geq q_2 \text{ a.e. in }\Omega \quad  \text{ implies that } \quad \Lambda(q_1)\geq_{d(q_2)}\Lambda(q_2).
\end{align*}
\end{theorem}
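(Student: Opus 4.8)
The plan is to establish the quadratic-form inequality \eqref{eq:monotonicity} first, and then deduce the Loewner-order statement as an immediate consequence. The starting point is Lemma~\ref{lemma:NtD_and_bilinear_forms}, which for $F\in H_{q_1,q_2}(\Omega_e)$ gives $\left\langle (\Lambda(q_1)-\Lambda(q_2))F,F\right\rangle = \mathscr{B}_{q_1}(u_1,u_1)-\mathscr{B}_{q_2}(u_2,u_2)$, where $u_j\in H_{q_j}^s(\R^n)$ solves $(-\Delta)^s u_j + q_j u_j = 0$ in $\Omega$ with $u_j|_{\Omega_e}=F$. First I would rewrite the right-hand side using the standard ``monotonicity trick'': writing $u_2 = u_1 + (u_2-u_1)$ and noting that $u_2-u_1\in H_0^s(\Omega)$, expand $\mathscr{B}_{q_2}(u_2,u_2)$ and use the variational characterization of $u_1$ (i.e.\ $\mathscr{B}_{q_1}(u_1,w)=0$ for all $w\in H_0^s(\Omega)$, valid up to the solvability constraint) to kill the cross terms. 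The clean outcome should be an identity of the form
\[
\mathscr{B}_{q_1}(u_1,u_1)-\mathscr{B}_{q_2}(u_2,u_2) = \int_\Omega (q_1-q_2)|u_1|^2\dx - \mathscr{B}_{q_2}(u_2-u_1,u_2-u_1),
\]
so that \eqref{eq:monotonicity} reduces to showing $\mathscr{B}_{q_2}(u_2-u_1,u_2-u_1)\le 0$ for $F$ ranging over a subspace of finite codimension at most $d(q_2)$.

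The next step is to control the sign of $\mathscr{B}_{q_2}$ on $H_0^s(\Omega)$. Using the operator representation recorded in the excerpt, $\mathscr{B}_{q_2}(w,w) = \left((I-\iota^*\iota+\iota^*M_{q_2}\iota)w,w\right)_{H_0^s(\Omega)}$, so $\mathscr{B}_{q_2}$ is the quadratic form of the self-adjoint operator $A_{q_2}:=I - (\iota^*\iota - \iota^*M_{q_2}\iota)$, a compact perturbation of the identity. By Definition~\ref{def:dimension_d}, the compact self-adjoint operator $K_{q_2}:=\iota^*\iota-\iota^*M_{q_2}\iota$ has exactly $d(q_2)$ eigenvalues (with multiplicity) exceeding $1$; let $V_0\subseteq H_0^s(\Omega)$ be the span of the corresponding eigenfunctions, a space of dimension $d(q_2)$. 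Then on $V_0^\perp\cap H_0^s(\Omega)$ one has $K_{q_2}\le I$ in the form sense, hence $A_{q_2}=I-K_{q_2}\ge 0$ there, i.e.\ $\mathscr{B}_{q_2}(w,w)\ge 0$. That is the wrong sign, so I need the difference $u_2-u_1$ to avoid $V_0^\perp$ — but actually I want it to \emph{lie in} (a slightly enlarged version of) $V_0$, or rather I want to exclude those $F$ for which $u_2-u_1$ has a component outside $V_0$ on which the form is positive. The correct move, following \cite{harrach2018helmholtz}, is to choose $V\subseteq H_{q_1,q_2}(\Omega_e)$ to be (a subspace of dimension $\le d(q_2)$ related to) the preimage of $V_0$ under the affine map $F\mapsto u_2-u_1$; more precisely, one observes that $F\mapsto u_2(F)-u_1(F)$ is a bounded linear map $\mathcal{T}:H_{q_1,q_2}(\Omega_e)\to H_0^s(\Omega)$, one sets $V:=\mathcal T^{-1}(V_0)^\perp$-complement issues aside, and restricts to $F$ with $\mathcal T F\in$ the ``negative'' part — wait, this needs care. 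The cleaner formulation: decompose $H_0^s(\Omega) = V_0 \oplus V_0^\perp$ and note $\mathscr{B}_{q_2}$ restricted to $V_0^\perp$ is $\ge 0$; we want $\mathcal T F\in$ the set where $\mathscr{B}_{q_2}\le 0$. This is not a subspace, so instead one argues: let $P$ be the orthogonal projection onto $V_0$; for $F$ with $\mathcal T F = (\mathcal T F)$, split $\mathscr{B}_{q_2}(\mathcal T F, \mathcal T F)$. Hmm — I think the actual argument is subtler and I should state it carefully.

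Let me restate the core step more carefully, as this is where the main obstacle lies. Since $\mathscr{B}_{q_2}$ is the form of $A_{q_2} = I-K_{q_2}$ with $K_{q_2}$ compact self-adjoint, there is a finite-dimensional spectral subspace $V_0$ (of dimension $d(q_2)$) on which $A_{q_2}<0$ may happen, and $A_{q_2}\ge 0$ on $V_0^\perp\cap H_0^s(\Omega)$. I do not get $\mathscr{B}_{q_2}(\mathcal T F,\mathcal T F)\le 0$ for free; rather, I only need the \emph{combined} quantity $\int_\Omega(q_1-q_2)|u_1|^2 - \mathscr{B}_{q_2}(\mathcal T F,\mathcal T F)$ to be $\ge \int_\Omega (q_1-q_2)|u_1|^2$, i.e.\ I genuinely need $\mathscr{B}_{q_2}(\mathcal T F,\mathcal T F)\le 0$. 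Since this fails in general, the resolution must be that the identity I derived in the first paragraph is itself only an identity modulo the finite-dimensional obstruction, or — more likely — that the correct derivation uses $u_2$'s variational identity rather than $u_1$'s, yielding $\mathscr{B}_{q_1}(u_1,u_1)-\mathscr{B}_{q_2}(u_2,u_2) = \int_\Omega(q_1-q_2)|u_1|^2 + \mathscr{B}_{q_1}(u_1-u_2,u_1-u_2)$ with a \emph{plus} sign and $\mathscr{B}_{q_1}$, and then one needs $\mathscr{B}_{q_1}\ge 0$ on a cofinite subspace — but $d$ should then be $d(q_1)$, not $d(q_2)$. Since the theorem asserts $d(q_2)$, the bilinear-form bookkeeping must be arranged so that the ``bad'' form is $\mathscr{B}_{q_2}$ applied to $u_1-u_2$ with a favorable sign. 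I would therefore expand starting from $u_1 = u_2 + (u_1-u_2)$, write $\mathscr{B}_{q_2}(u_1,u_1)$, use the solvability/variational identity for $u_2$ to drop cross terms (legitimate because $u_1 - u_2\in H_0^s(\Omega)$ and $F\in H_{q_2}(\Omega_e)$ ensures the relevant orthogonality), obtaining $\mathscr{B}_{q_2}(u_1,u_1) = \mathscr{B}_{q_2}(u_2,u_2) + \mathscr{B}_{q_2}(u_1-u_2,u_1-u_2)$; combining with $\mathscr{B}_{q_1}(u_1,u_1)=\mathscr{B}_{q_2}(u_1,u_1)+\int_\Omega(q_1-q_2)|u_1|^2$ gives
\[
\left\langle(\Lambda(q_1)-\Lambda(q_2))F,F\right\rangle = \int_\Omega (q_1-q_2)|u_1|^2\dx + \mathscr{B}_{q_2}(u_1-u_2,u_1-u_2).
\]
Now I genuinely need $\mathscr{B}_{q_2}(u_1-u_2,u_1-u_2)\ge 0$, which holds provided $u_1-u_2\in V_0^\perp$, where $V_0\subseteq H_0^s(\Omega)$ is the $d(q_2)$-dimensional spectral subspace above. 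Hence I define $V:=\{F\in H_{q_1,q_2}(\Omega_e): (\mathcal T F)\ \text{has a nonzero }V_0\text{-component}\}$ — not a subspace — so instead I take $V$ to be an orthonormal-complement construction: let $\mathcal T F = u_1(F)-u_2(F)$, let $P_0$ project onto $V_0$, and set $V := (\ker(P_0\circ\mathcal T))^\perp \subseteq H_{q_1,q_2}(\Omega_e)$, which has $\dim V = \dim\operatorname{range}(P_0\circ\mathcal T)\le \dim V_0 = d(q_2)$. For $F\in V^\perp = \ker(P_0\circ\mathcal T)$ we have $\mathcal T F\in V_0^\perp$, hence $\mathscr{B}_{q_2}(\mathcal T F,\mathcal T F)\ge 0$, and \eqref{eq:monotonicity} follows. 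The final implication is then trivial: if $q_1\ge q_2$ a.e.\ then $\int_\Omega(q_1-q_2)|u_1|^2\ge 0$, so $\left\langle(\Lambda(q_1)-\Lambda(q_2))F,F\right\rangle\ge 0$ on the cofinite subspace $V^\perp$, which is exactly $\Lambda(q_1)\ge_{d(q_2)}\Lambda(q_2)$ by Definition~\ref{def:generalized_Loewner}. The main obstacle is getting the bilinear-form algebra and the sign exactly right — in particular verifying that the cross terms vanish despite the resonance-induced solvability constraints (this is where membership $F\in H_{q_1,q_2}(\Omega_e)$ and the definition of $N_q$ via \eqref{N_q} are used) — and confirming that the excluded subspace can be bounded by $d(q_2)$ rather than a crude sum.
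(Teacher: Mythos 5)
Your final argument is correct and follows essentially the same route as the paper: you derive the identity $\langle(\Lambda(q_1)-\Lambda(q_2))F,F\rangle = \int_\Omega (q_1-q_2)|u_1|^2\dx + \mathscr{B}_{q_2}(u_1-u_2,u_1-u_2)$ (the paper's Lemma~\ref{Lemma Mono 1}), use the spectral decomposition of $\iota^*\iota-\iota^*M_{q_2}\iota$ to find a $d(q_2)$-dimensional $V_0\subset H_0^s(\Omega)$ with $\mathscr{B}_{q_2}\ge 0$ on $V_0^\perp$ (the paper's Lemma~\ref{Lemma Mono 2}), and pull this back through $F\mapsto u_1-u_2$ to get $V$ with $\dim V\le d(q_2)$ (the paper's Lemma~\ref{Lemma Mono 3}, where the paper writes $V=S^*W$, equivalent to your $\ker(P_0\circ\mathcal T)^\perp=\mathcal T^*(V_0)$). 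The several false starts and sign corrections in your writeup do not affect correctness, though a cleaner exposition would start directly from $\mathscr{B}_{q_2}(u_2-u_1,u_2-u_1)$ and expand, as the paper does.
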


Before we prove Theorem~\ref{Theorem for monotonicity}, let us 
also formulate a variant that will be useful for applying the idea of localized potentials in the next sections,
remark on interchanging $q_1$ and $q_2$, and discuss the dependence of $\dim(N_q)$ and $d(q)$ on $q$. 

\begin{theorem}\label{thm:useful} 
Let $q_{1},q_{2}\in L^{\infty}(\Omega)$. There exists a subspace 
\[
V_+\subseteq H_{q_1,q_2}(\Omega_e)\quad \text{ with } \quad \dim(V_+)\leq d(q_2)+\dim(N_{q_2}),
\]
and a constant $\lambda>0$, so that for all $F\in V_+^\perp\subseteq H_{q_1,q_2}(\Omega_e)$
\begin{align}\label{eq:useful_monotonicity}
\Langle \left(\Lambda(q_1)-\Lambda(q_2)\right) F, F \Rangle 
\geq \int_\Omega (q_1-q_2)|u_1|^2 \dx + \lambda\norm{u_1-u_2}_{H^s(\R^n)}^2
\end{align}
and, for all $D\subseteq \Omega$ containing $\supp(q_1-q_2)$,
\begin{align}\label{eq:useful_bound}
\norm{u_2}_{L^2(D)}\leq c \norm{u_1}_{L^2(D)},
\end{align}
where $c:=1+\frac{1}{\lambda}\norm{q_1-q_2}_{L^\infty(D)}$, and, for $j=1,2$,
$u_j\in H_{q_j}^s(\R^n)$ solve
\[
(-\Delta)^{s} u_j+q_j u_j=0\quad \text{ in } \Omega,\qquad u_j|_{\Omega_{e}}=F.
\]
\end{theorem}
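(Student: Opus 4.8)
The plan is to refine the proof of the basic monotonicity estimate in Theorem~\ref{Theorem for monotonicity} by keeping track of the coercivity gap on a suitable finite-codimension subspace. First I would recall from Lemma~\ref{lemma:NtD_and_bilinear_forms} that for $F\in H_{q_1,q_2}(\Omega_e)$ the quadratic form is
$\Langle (\Lambda(q_1)-\Lambda(q_2))F,F\Rangle = \mathscr B_{q_1}(u_1,u_1)-\mathscr B_{q_2}(u_2,u_2)$,
and then expand this difference using the weak formulations. Writing $u_1=u_2+(u_1-u_2)$ and using that $u_1-u_2\in H_0^s(\Omega)$ (both traces equal $F$), a standard computation (the same bilinear identity underlying \cite[Lemma~3.1]{harrach2019nonlocal-mono1}) gives
$\mathscr B_{q_1}(u_1,u_1)-\mathscr B_{q_2}(u_2,u_2) = \int_\Omega(q_1-q_2)|u_1|^2\dx + \mathscr B_{q_2}(u_1-u_2,u_1-u_2)$.
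So the extra term I must produce is $\lambda\norm{u_1-u_2}_{H^s(\R^n)}^2$, which requires coercivity of $\mathscr B_{q_2}$ on the relevant part of $H_0^s(\Omega)$.

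The coercivity is where the finite-codimension restriction enters. Using the representation $\mathscr B_{q_2}(w,w)=((I-\iota^*\iota+\iota^*M_{q_2}\iota)w,w)_{H_0^s(\Omega)}$, the operator $T_2:=I-\iota^*\iota+\iota^*M_{q_2}\iota$ is a compact self-adjoint perturbation of the identity; by Definition~\ref{def:dimension_d} it has exactly $d(q_2)$ eigenvalues below some threshold that could spoil positivity, plus the kernel $N_{q_2}$ of dimension $\dim N_{q_2}$. Let $W_2\subseteq H_0^s(\Omega)$ be the span of the eigenfunctions corresponding to all eigenvalues $\leq \lambda$ for a suitably small $\lambda>0$ (in particular including $N_{q_2}$); then $\dim W_2\leq d(q_2)+\dim N_{q_2}$ and $\mathscr B_{q_2}(w,w)\geq \lambda\norm{w}_{H_0^s(\Omega)}^2$ for all $w\in W_2^\perp$ in $H_0^s(\Omega)$. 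The main obstacle is that $u_1-u_2$ need not lie in $W_2^\perp$; I therefore pull $W_2$ back to a subspace $V_+\subseteq H_{q_1,q_2}(\Omega_e)$ by intersecting $H_{q_1,q_2}(\Omega_e)$ with the preimage of $W_2^\perp$ under the map $F\mapsto u_1-u_2$ (equivalently, excluding the $F$ for which $u_1-u_2$ has a nonzero $W_2$-component), so that $\mathrm{codim}$-wise $\dim(V_+)\leq\dim W_2\leq d(q_2)+\dim N_{q_2}$. For $F\in V_+^\perp$ we then get $u_1-u_2\in W_2^\perp$, hence $\mathscr B_{q_2}(u_1-u_2,u_1-u_2)\geq\lambda\norm{u_1-u_2}_{H^s(\R^n)}^2$ (norms on $H_0^s(\Omega)$ and $H^s(\R^n)$ agree there), which yields \eqref{eq:useful_monotonicity}.

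The bound \eqref{eq:useful_bound} then follows from \eqref{eq:useful_monotonicity} by a triangle-inequality argument. Since $\supp(q_1-q_2)\subseteq D$, the first term on the right of \eqref{eq:useful_monotonicity} is bounded below by $-\norm{q_1-q_2}_{L^\infty(D)}\norm{u_1}_{L^2(D)}^2$, and the left-hand side is $\int_\Omega(q_1-q_2)u_1u_2\dx$ (again Lemma~\ref{lemma:NtD_and_bilinear_forms}), which is bounded above by $\norm{q_1-q_2}_{L^\infty(D)}\norm{u_1}_{L^2(D)}\norm{u_2}_{L^2(D)}$. Combining these with $\lambda\norm{u_1-u_2}_{H^s(\R^n)}^2\geq\lambda\norm{u_1-u_2}_{L^2(D)}^2\geq\lambda(\norm{u_2}_{L^2(D)}-\norm{u_1}_{L^2(D)})^2$ and rearranging gives a quadratic inequality in $\norm{u_2}_{L^2(D)}$ whose solution is exactly $\norm{u_2}_{L^2(D)}\leq(1+\tfrac1\lambda\norm{q_1-q_2}_{L^\infty(D)})\norm{u_1}_{L^2(D)}$. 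I expect the only delicate bookkeeping to be the codimension count for $V_+$ and making sure $\lambda$ can be chosen uniformly (independent of $F$), which follows from the discreteness of the spectrum of the compact operator $T_2$.
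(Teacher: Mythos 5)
Your derivation of \eqref{eq:useful_monotonicity} essentially matches the paper's proof: the algebraic identity
\[
\Langle \left(\Lambda(q_1)-\Lambda(q_2)\right) F, F \Rangle
= \int_\Omega (q_1-q_2)|u_1|^2 \dx + \mathscr{B}_{q_2}(u_2-u_1,u_2-u_1)
\]
is precisely the paper's Lemma~\ref{Lemma Mono 1}, and your coercivity step on the complement of the span of nonpositive eigenspaces of $I-\iota^*\iota+\iota^*M_{q_2}\iota$ is the paper's Lemma~\ref{Lemma Mono 2}. The sentence defining $V_+$ as ``the intersection of $H_{q_1,q_2}(\Omega_e)$ with the preimage of $W_2^\perp$'' actually describes $V_+^\perp$, not $V_+$; what you want is $V_+ := S^*W_2$ where $S=S_{q_2}-S_{q_1}$, which is how the paper's Lemma~\ref{Lemma Mono 3} puts it. That is only a terminological slip.

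There is, however, a genuine gap in your argument for \eqref{eq:useful_bound}. You bound the two $q_1-q_2$ integrals separately, arriving at a quadratic inequality of the form $\lambda (b-a)^2 \leq \kappa a(a+b)$ with $a=\norm{u_1}_{L^2(D)}$, $b=\norm{u_2}_{L^2(D)}$, $\kappa=\norm{q_1-q_2}_{L^\infty(D)}$. Solving this gives $b \leq a + \tfrac{a}{2}\left(\mu + \sqrt{\mu^2+8\mu}\right)$ with $\mu=\kappa/\lambda$, which for small $\mu$ behaves like $b \lesssim a(1+\sqrt{2\mu})$ and is strictly weaker than the claimed $b\leq (1+\mu)a$. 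So the asserted constant $c=1+\tfrac1\lambda\norm{q_1-q_2}_{L^\infty(D)}$ is not ``exactly'' what the quadratic inequality gives. The missing step is a cancellation: instead of estimating $\int(q_1-q_2)u_1u_2$ and $\int(q_1-q_2)|u_1|^2$ separately, combine them first so that
\[
\lambda\norm{u_2-u_1}_{H^s(\R^n)}^2 \leq \mathscr{B}_{q_2}(u_2-u_1,u_2-u_1) = \int_\Omega (q_1-q_2)\,u_1(u_2-u_1)\dx,
\]
which follows from $\mathscr{B}_{q_1}(u_1,w)=\mathscr{B}_{q_2}(u_2,w)=0$ for all $w\in H_0^s(\Omega)$. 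The right-hand side is then \emph{linear} in $u_2-u_1$ and gives directly $\norm{u_2-u_1}_{H^s(\R^n)}\leq \tfrac1\lambda\norm{q_1-q_2}_{L^\infty(D)}\norm{u_1}_{L^2(D)}$, and the triangle inequality finishes the job with the stated constant. Without this cancellation you face a quadratic-vs-quadratic estimate and lose a square root.
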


\begin{remark}\label{remark_monotonicity}
By interchanging $q_1$ and $q_2$ in Theorems \ref{Theorem for monotonicity} and \ref{thm:useful}, we also obtain that there exist 
 subspaces 
\[
V,V_+\subseteq H_{q_1,q_2}(\Omega_e)\quad \text{ with } \quad \dim(V)\leq d(q_1), \text{ and } \dim(V_+)\leq d(q_1)+\dim(N_{q_1}),
\]
and a constant $\lambda>0$, so that 
\begin{alignat*}{2}
\Langle \left(\Lambda(q_1)-\Lambda(q_2)\right) F, F \Rangle 
&\leq \int_\Omega (q_1-q_2)|u_2|^2 \dx \quad && \text{ for all } F\in V^\perp\subseteq H_{q_1,q_2}(\Omega_e),
\end{alignat*}
and
\begin{align*}
\Langle \left(\Lambda(q_1)-\Lambda(q_2)\right) F, F \Rangle 
&\leq \int_\Omega (q_1-q_2)|u_2|^2 \dx - \lambda\norm{u_1-u_2}_{H^s(\R^n)}^2,\\
\norm{u_1}_{L^2(D)}& \leq c \norm{u_2}_{L^2(D)},
\end{align*}
for all $D\supseteq\supp(q_1-q_2)$, and all $F\in V_+^\perp\subseteq H_{q_1,q_2}(\Omega_e)$, where $c:=1+\frac{1}{\lambda}\norm{q_1-q_2}_{L^\infty(D)}$,
$u_1=S_{q_1}(F)$, and $u_2=S_{q_2}(F)$.

Combining Theorem \ref{Theorem for monotonicity} with its interchanged version, we obtain a subspace
\[
V\subseteq H_{q_1,q_2}(\Omega_e)\quad \text{ with } \quad \dim(V)\leq d(q_1)+d(q_2),
\]
so that 
\begin{align*}
\int_\Omega (q_1-q_2)|u_1|^2 \dx\leq
\Langle \left(\Lambda(q_1)-\Lambda(q_2)\right) F, F \Rangle 
\leq \int_\Omega (q_1-q_2)|u_2|^2 \dx.
\end{align*}
for all $F\in V^\perp\subseteq H_{q_1,q_2}(\Omega_e)$, $u_1=S_{q_1}(F)$, and $u_2=S_{q_2}(F)$.

Combining Theorem \ref{thm:useful} with its interchanged version, we obtain a subspace
\[
V_+\subseteq H_{q_1,q_2}(\Omega_e)\quad \text{ with } \quad \dim(V_+)\leq d(q_1)+d(q_2)+\dim(N_{q_1})+\dim(N_{q_2}),
\]
and constants $\lambda,c_1,c_2>0$, so that
\begin{align*}
\int_\Omega (q_1-q_2)|u_1|^2 \dx+\lambda\norm{u_1-u_2}_{H^s(\R^n)}^2 &\leq
\Langle \left(\Lambda(q_1)-\Lambda(q_2)\right) F, F \Rangle\\ 
&\leq \int_\Omega (q_1-q_2)|u_2|^2 \dx-\lambda\norm{u_1-u_2}_{H^s(\R^n)}^2,
\end{align*}
and
\[
c_1\norm{u_1}_{L^2(D)}\leq \norm{u_2}_{L^2(D)}\leq c_2 \norm{u_1}_{L^2(D)}
\]
for all $D\supseteq\supp(q_1-q_2)$, and all $F\in V_+^\perp\subseteq H_{q_1,q_2}(\Omega_e)$, 
$u_1=S_{q_1}(F)$, and $u_2=S_{q_2}(F)$.
\end{remark}

\begin{theorem}\label{thm:dimensions} 
	Let $d(q)$ be given by Definition \ref{def:dimension_d} and $N_q$ be defined by \eqref{N_q}.
\begin{enumerate}[(a)] 
\item For $q_1,q_2\in L^\infty(\Omega)$ 
\[
q_1\leq q_2 \quad \text{ implies } \quad d(q_1)\geq d(q_2).
\]
\item For all $q_1\in L^\infty(\Omega)$ there exists $\epsilon>0$ so that
\[
\dim(N_{q_1})\geq \dim(N_{q_2}) \quad \text{ for all } q_2\in L^\infty(\Omega)\text{ with } \norm{q_2-q_1}_{L^\infty(\Omega)}\leq \epsilon.
\]
\end{enumerate}
\end{theorem}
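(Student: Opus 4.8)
The plan is to reduce both statements to standard spectral-perturbation facts for the compact self-adjoint operator $A_q:=\iota^*\iota-\iota^*M_q\iota$ on $H_0^s(\Omega)$, using that $d(q)$ counts its eigenvalues above $1$ and, by \eqref{N_q}, $N_q=\kernel(I-A_q)$ so $\dim(N_q)$ is the multiplicity of the eigenvalue $1$ of $A_q$.

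For part (a), first I would observe that $q_1\leq q_2$ implies $M_{q_1}\leq M_{q_2}$ as self-adjoint operators on $L^2(\Omega)$, hence $\iota^*M_{q_1}\iota\leq\iota^*M_{q_2}\iota$ on $H_0^s(\Omega)$, and therefore $A_{q_2}\leq A_{q_1}$ in the Loewner order. Then I would invoke the min-max (Courant--Fischer) characterization of eigenvalues of compact self-adjoint operators: for each $k$, the $k$-th largest eigenvalue of $A_{q_2}$ is $\leq$ the $k$-th largest eigenvalue of $A_{q_1}$. Consequently the number of eigenvalues of $A_{q_2}$ exceeding $1$ is at most the number of eigenvalues of $A_{q_1}$ exceeding $1$, i.e.\ $d(q_1)\geq d(q_2)$. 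The only mild care needed is the bookkeeping of multiplicities and the possibility of accumulation of eigenvalues at $0$, but since we only look at eigenvalues bounded away from $0$ (namely $>1$), these form a finite set and the min-max comparison applies cleanly.

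For part (b), the key point is that $q\mapsto M_q$ is Lipschitz from $L^\infty(\Omega)$ into the operator norm on $\mathcal L(L^2(\Omega))$, with $\norm{M_{q_2}-M_{q_1}}\leq\norm{q_2-q_1}_{L^\infty(\Omega)}$; hence $q\mapsto A_q=\iota^*\iota-\iota^*M_q\iota$ is norm-continuous (indeed Lipschitz) into $\mathcal L(H_0^s(\Omega))$, since $\iota$ is bounded. Now $1$ is an isolated point of the spectrum of $A_{q_1}$ (all nonzero spectral points of a compact operator are isolated eigenvalues of finite multiplicity), so there is a circle $\Gamma$ in $\C$ around $1$ enclosing no other spectral point of $A_{q_1}$ and with $\mathrm{dist}(\Gamma,\sigma(A_{q_1}))>0$. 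By norm-continuity, for $\norm{q_2-q_1}_{L^\infty(\Omega)}\leq\epsilon$ small enough $\Gamma$ also avoids $\sigma(A_{q_2})$, the Riesz spectral projections $P_j:=\tfrac{1}{2\pi i}\oint_\Gamma (z-A_{q_j})^{-1}\dx[z]$ depend norm-continuously on $q_j$, and a standard argument (if $\norm{P_2-P_1}<1$ then $\mathrm{rank}(P_2)=\mathrm{rank}(P_1)$, or rather $\leq$, noting these are finite-rank) gives that the total algebraic multiplicity of eigenvalues of $A_{q_2}$ inside $\Gamma$ equals that of $A_{q_1}$, namely $\dim(N_{q_1})$. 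Since for the self-adjoint operator $A_{q_2}$ the eigenvalue $1$ (if present) contributes $\dim(N_{q_2})$ to that total, and all eigenvalues inside $\Gamma$ are real and counted with multiplicity, we get $\dim(N_{q_2})\leq\dim(N_{q_1})$. (One can equally phrase this via the upper semicontinuity of $\dim\kernel(I-A_q)$ at $q_1$, which is exactly the statement that the eigenvalue $1$ cannot suddenly gain multiplicity under a small perturbation.)

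The main obstacle is the part (b) argument: making precise that the multiplicity of a fixed eigenvalue is upper semicontinuous under norm-perturbations. This is classical (Kato, \emph{Perturbation Theory for Linear Operators}), but one must be slightly careful that $A_{q_2}$ need not have $1$ as an eigenvalue at all (in which case $\dim(N_{q_2})=0$ and there is nothing to prove) and that nearby eigenvalues of $A_{q_2}$ may split off from $1$; the Riesz-projection bookkeeping handles both. Part (a) is routine once the Loewner comparison $A_{q_2}\leq A_{q_1}$ is in place and the min-max principle is invoked.
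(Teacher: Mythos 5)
Your proof is correct. For part (a), you take essentially the same route as the paper: the Loewner ordering $A_{q_2}\leq A_{q_1}$ for $A_q:=\iota^*\iota-\iota^*M_q\iota$ together with the Courant--Fischer min-max principle gives the termwise inequality of eigenvalues and hence $d(q_1)\geq d(q_2)$; the paper spells out the min-max quotient explicitly but the idea is identical.

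For part (b), you take a genuinely different route from the paper. The paper stays entirely within the min-max framework: it fixes $\epsilon$ as half the distance from the eigenvalue~$1$ of $A_{q_1}$ to its nearest other eigenvalues, observes that $\norm{q_2-q_1}_{L^\infty}\leq\epsilon$ forces $|\lambda_k^{(1)}-\lambda_k^{(2)}|\leq\epsilon$ for all $k$ by min-max, and concludes by counting indices that at most $\dim(N_{q_1})$ of the $\lambda_k^{(2)}$ can equal $1$. Your argument instead invokes the Riesz spectral projection around an isolating contour and the rank-stability of projections under small norm perturbations. Both are standard and valid; the paper's approach is more elementary in the sense that it uses no contour integration and hence no implicit complexification of the (real) Hilbert space $H_0^s(\Omega)$, which your argument should really mention (draw the contour after passing to the complexification, then restrict the projections back). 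On the other hand, your Riesz-projection argument is arguably more robust: it does not require picking out the correct index window $d(q_1)+1,\ldots,d(q_1)+\dim(N_{q_1})$ and so sidesteps the small bookkeeping issue in the paper's proof when $d(q_1)=0$ (where $\lambda_{d(q_1)}^{(1)}$ in \eqref{eq:EV_def_epsilon} is not defined and one should just drop that term from the minimum). Either approach is a complete proof.
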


To prove Theorems \ref{Theorem for monotonicity}, \ref{thm:useful}, and \ref{thm:dimensions}, we first show the following lemmas.
\begin{lemma}\label{Lemma Mono 1}
	Let $q_1,q_2\in L^\infty(\Omega)$. Then, for all $F\in H_{q_1,q_2}(\Omega_e)$, 
	\begin{align*}
		\Langle \left(\Lambda(q_1)-\Lambda(q_2)\right) F, F \Rangle 
		+ \int_\Omega (q_2-q_1)|u_1|^2 \dx
		= \mathscr{B}_{q_2}(u_2-u_1,u_2-u_1),
	\end{align*}
	where $u_1=S_{q_1}(F)$, and $u_2=S_{q_2}(F)$. 
\end{lemma}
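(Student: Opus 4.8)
The plan is to expand both sides using Lemma~\ref{lemma:NtD_and_bilinear_forms} and the variational characterization of $u_1$ and $u_2$, then collect terms. First I would write, for $F\in H_{q_1,q_2}(\Omega_e)$ and $u_j=S_{q_j}(F)$,
\[
\Langle \left(\Lambda(q_1)-\Lambda(q_2)\right) F, F \Rangle = \mathscr{B}_{q_1}(u_1,u_1)-\mathscr{B}_{q_2}(u_2,u_2),
\]
using that $\Langle \Lambda(q_j)F,F\Rangle=\mathscr{B}_{q_j}(u_j,u_j)$. The natural next move is to rewrite the right-hand side $\mathscr{B}_{q_2}(u_2-u_1,u_2-u_1)$ by bilinearity as
\[
\mathscr{B}_{q_2}(u_2,u_2)-2\mathscr{B}_{q_2}(u_1,u_2)+\mathscr{B}_{q_2}(u_1,u_1).
\]

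The key simplification is the cross term: since $u_2$ solves $(-\Delta)^s u_2+q_2u_2=0$ in $\Omega$ and $u_1|_{\Omega_e}=F=u_2|_{\Omega_e}$, the difference $u_1-u_2\in H_0^s(\Omega)$ is an admissible test function, so the variational formulation \eqref{eq:VarFormFractional} gives $\mathscr{B}_{q_2}(u_2,u_1-u_2)=0$, i.e. $\mathscr{B}_{q_2}(u_1,u_2)=\mathscr{B}_{q_2}(u_2,u_2)$. Substituting this into the expansion of the right-hand side collapses it to $\mathscr{B}_{q_2}(u_1,u_1)-\mathscr{B}_{q_2}(u_2,u_2)$. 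It then remains to compare this with $\mathscr{B}_{q_1}(u_1,u_1)-\mathscr{B}_{q_2}(u_2,u_2)$, and since
\[
\mathscr{B}_{q_1}(u_1,u_1)-\mathscr{B}_{q_2}(u_1,u_1)=\int_\Omega(q_1-q_2)|u_1|^2\dx,
\]
the claimed identity follows after moving the $\int_\Omega(q_2-q_1)|u_1|^2\dx$ term to the left.

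I do not expect a genuine obstacle here; this is the standard first-order monotonicity identity (an orthogonality/Galerkin argument), and its analogue for positive potentials is \cite[Lemma~3.1]{harrach2019nonlocal-mono1}. The only points needing care are bookkeeping ones: making sure $F\in H_{q_1,q_2}(\Omega_e)$ guarantees both solutions $u_1=S_{q_1}(F)$, $u_2=S_{q_2}(F)$ actually exist (which is exactly why the intersection space is used), that $u_1-u_2\in H_0^s(\Omega)$ so that \eqref{eq:VarFormFractional} applies, and that all the bilinear-form manipulations are legitimate since $\mathscr{B}_{q_j}$ is a bounded symmetric bilinear form on $H^s(\R^n)$. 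With these in place the computation is a short, purely algebraic rearrangement.
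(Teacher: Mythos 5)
Your proof is correct and is essentially the same computation as the paper's: both expand $\mathscr{B}_{q_2}(u_2-u_1,u_2-u_1)$ by bilinearity, kill the cross term via the Galerkin orthogonality $\mathscr{B}_{q_2}(u_2,u_2-u_1)=0$, and shift $q_2$ to $q_1$ on the $\mathscr{B}(u_1,u_1)$ term using $\int_\Omega(q_2-q_1)|u_1|^2\dx$. The paper identifies $\Langle\Lambda(q_2)F,F\Rangle$ via the mixed form $\mathscr{B}_{q_2}(u_2,u_1)$ rather than $\mathscr{B}_{q_2}(u_2,u_2)$, but by that same orthogonality the two are equal, so the difference is purely cosmetic.
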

\begin{proof}
Using lemma \ref{lemma:NtD_and_bilinear_forms}, the assertion follows from
\begin{align*}
\lefteqn{\quad \mathscr{B}_{q_2}(u_2-u_1,u_2-u_1)= \mathscr{B}_{q_2}(u_2,u_2)-2 \mathscr{B}_{q_2}(u_2,u_1) + \mathscr{B}_{q_2}(u_1,u_1)}\\
&= - \mathscr{B}_{q_2}(u_2,u_1) + \mathscr{B}_{q_2}(u_1,u_1)
= - \mathscr{B}_{q_2}(u_2,u_1) + \mathscr{B}_{q_1}(u_1,u_1) + \int_\Omega (q_2-q_1)|u_1|^2 \dx\\
&= \Langle \left(\Lambda(q_1)-\Lambda(q_2)\right) F, F \Rangle+ \int_\Omega (q_2-q_1)|u_1|^2 \dx.
\end{align*}
\end{proof}

\begin{lemma}\label{Lemma Mono 2}
Let $q\in L^\infty(\Omega)$. Then there exists a subspace $W\subseteq H_0^s(\Omega)$ with $\dim(W)= d(q)$, and a constant $\lambda>0$, so
that 
\begin{alignat*}{2}
\mathscr{B}_{q}(w,w)&\geq 0 \quad && \text{ for all } w\in W^\perp\subseteq H_0^s(\Omega), \text{ and }\\
\mathscr{B}_{q}(w,w)&\geq \lambda\norm{w}_{H^s(\R^n)}^2 \quad && \text{ for all } w\in (W+N_q)^\perp\subseteq H_0^s(\Omega).
\end{alignat*}
\end{lemma}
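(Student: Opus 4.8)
The plan is to diagonalize the compact self-adjoint operator $K := \iota^*\iota - \iota^* M_q \iota$ on $H_0^s(\Omega)$ and use its spectral decomposition. Recall from \eqref{N_q} that $N_q = \kernel(I - K)$, i.e. $N_q$ is the eigenspace of $K$ for the eigenvalue $1$, and that $d(q)$ counts (with multiplicity) the eigenvalues of $K$ strictly greater than $1$. For $w\in H_0^s(\Omega)$ we have $\mathscr{B}_q(w,w) = ((I-K)w,w)_{H_0^s(\Omega)}$, so the sign of the quadratic form is governed by the spectrum of $I-K$: it is non-negative exactly on the part of the space where $K\le 1$, and bounded below by a positive multiple of the norm where $K < 1$ strictly.

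First I would let $W\subseteq H_0^s(\Omega)$ be the spectral subspace of $K$ associated with the eigenvalues $\mu > 1$; by Definition~\ref{def:dimension_d} this has $\dim(W) = d(q)$, and it is finite-dimensional since $K$ is compact (eigenvalues $>1$ are isolated with finite multiplicity and can accumulate only at $0$). For $w\in W^\perp\subseteq H_0^s(\Omega)$ we have, by the spectral theorem, $(Kw,w)_{H_0^s(\Omega)}\le \|w\|_{H_0^s(\Omega)}^2$, hence $\mathscr{B}_q(w,w) = \|w\|_{H_0^s(\Omega)}^2 - (Kw,w)_{H_0^s(\Omega)}\ge 0$, which is the first inequality. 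Next, since the spectrum of $K$ is discrete away from $0$ and $0\le 1$ is not an accumulation point of $\{1\}$ from below unless $1$ itself is in the spectrum, I consider $(W+N_q)^\perp$: this is the spectral subspace for eigenvalues $\mu < 1$. The key point is that the set of eigenvalues of $K$ in $(-\infty,1)$ has its only possible accumulation point at $0 < 1$, so there is a gap: $\sup\{\mu \in \sigma(K) : \mu < 1\} =: 1-\lambda < 1$ for some $\lambda > 0$ (if there are no such eigenvalues, take $\lambda=1$). Then for $w\in (W+N_q)^\perp$ we get $(Kw,w)_{H_0^s(\Omega)}\le (1-\lambda)\|w\|_{H_0^s(\Omega)}^2$, so $\mathscr{B}_q(w,w)\ge \lambda\|w\|_{H_0^s(\Omega)}^2$. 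Finally I would convert the $H_0^s(\Omega)$-norm to the $H^s(\R^n)$-norm: these are equal on $H_0^s(\Omega)$ by the definition of the inner product on $H_0^s(\Omega)$ as the restriction of the $H^s(\R^n)$ inner product, so no constant is lost (or at worst one absorbs an equivalence constant into $\lambda$).

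The main obstacle — really the only subtle point — is establishing the spectral gap below $1$, i.e.\ that $\lambda > 0$ can be chosen. This rests on the fact that $1$ is either not in $\sigma(K)$ or is an isolated eigenvalue of finite multiplicity (since $K$ is compact, any nonzero spectral value has this property), so the eigenvalues strictly below $1$ stay bounded away from $1$; they can only accumulate at $0$. One should be slightly careful to phrase everything via the orthogonal projections onto the relevant spectral subspaces and to note $W$, $N_q$, $W+N_q$ are all finite-dimensional so their orthogonal complements in $H_0^s(\Omega)$ are well-defined and the decompositions are genuine. Everything else is a routine application of the spectral theorem for compact self-adjoint operators.
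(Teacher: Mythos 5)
Your proposal is correct and follows essentially the same route as the paper: both define $W$ as the spectral subspace of the compact self-adjoint operator $K=\iota^*\iota-\iota^*M_q\iota$ for eigenvalues $>1$, identify $N_q$ as the eigenspace for the eigenvalue $1$, and take $\lambda = 1-\mu$ where $\mu$ is the largest spectral value of $K$ below $1$. The only difference is that you spell out the spectral-gap argument (eigenvalues of a compact operator away from $0$ are isolated with finite multiplicity, so $\sup\{\mu\in\sigma(K):\mu<1\}<1$) a bit more explicitly than the paper, which simply takes the existence of a largest eigenvalue strictly below $1$ as understood.
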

\begin{proof}
Let $W$ be the sum of eigenspaces of the compact self-adjoint operator $\iota^*\iota - \iota^* M_q \iota$ corresponding to eigenvalues larger than $1$.
Then
\[
\mathscr{B}_{q}(w,w)=\left( (I-\iota^*\iota +\iota^* M_q \iota)w,w\right)_{H^s(\R^n)}\geq 0 \text{ for all } w\in W^\perp\subseteq H_0^s(\Omega).
\]
Since $N_q=\kernel(I-\iota^*\iota +\iota^* M_q \iota)$ is the eigenspace of $\iota^*\iota - \iota^* M_q \iota$ corresponding to the eigenvalue $1$, it also follows that
\[
\mathscr{B}_{q}(w,w)\geq (1-\mu)\norm{w}_{H^s(\R^n)}^2 \text{ for all } w\in (W+N_q)^\perp\subseteq H_0^s(\Omega),
\]
where $\mu$ is the largest eigenvalue of $\iota^*\iota - \iota^* M_q \iota$ smaller than $1$. Hence, the assertion follows with $\lambda:=1-\mu$.
\end{proof}

\begin{lemma}\label{Lemma Mono 3}
Let $q_1,q_2\in L^\infty(\Omega)$. There exists $\lambda>0$ and subspaces 
\[
V\subseteq V_+\subseteq H_{q_1,q_2}(\Omega_e)\quad \text{ with } \quad \dim(V)\leq d(q_2), \quad \dim(V_+)\leq d(q_2)+\dim(N_{q_2}),
\]
so that
\begin{alignat}{2}
\label{eq:mono3_semipos}\mathscr{B}_{q_2}(u_2-u_1,u_2-u_1)&\geq 0\quad && \text{ for all } F\in V^\perp\subseteq H_{q_1,q_2}(\Omega_e),\\
\label{eq:mono3_pos}\mathscr{B}_{q_2}(u_2-u_1,u_2-u_1)&\geq \lambda \norm{u_2-u_1}^2_{H^s(\R^n)} \quad &&\text{ for all } F\in V_+^\perp\subseteq H_{q_1,q_2}(\Omega_e),
\end{alignat}
where $u_1=S_{q_1}(F)$, and $u_2=S_{q_2}(F)$. 
\end{lemma}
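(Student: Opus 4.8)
The plan is to combine Lemma~\ref{Lemma Mono 2} (applied to the potential $q_2$) with the solvability theory from Corollary~\ref{cor:Dirichlet_bvp}. The key observation is that the difference $u_2-u_1$, where $u_j=S_{q_j}(F)$, lies in $H_0^s(\Omega)$ because both solutions share the same exterior Dirichlet data $F$, so $(u_2-u_1)|_{\Omega_e}=0$ and hence $u_2-u_1 = \gamma_{\Omega_e}^{(D)}$-trivial, i.e.\ $u_2-u_1\in H_0^s(\Omega)$. Therefore we may feed $w:=u_2-u_1$ into Lemma~\ref{Lemma Mono 2}, which supplies a subspace $W\subseteq H_0^s(\Omega)$ with $\dim(W)=d(q_2)$ and a constant $\lambda>0$ such that $\mathscr{B}_{q_2}(w,w)\geq 0$ for $w\in W^\perp\subseteq H_0^s(\Omega)$, and $\mathscr{B}_{q_2}(w,w)\geq\lambda\norm{w}_{H^s(\R^n)}^2$ for $w\in (W+N_{q_2})^\perp\subseteq H_0^s(\Omega)$.

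First I would define the linear map $T:\ H_{q_1,q_2}(\Omega_e)\to H_0^s(\Omega)$, $F\mapsto S_{q_2}(F)-S_{q_1}(F)$, which is bounded by Corollary~\ref{cor:Dirichlet_bvp}. Then I would set $V:=T^{-1}(W)\cap(\text{something})$ — more precisely, take $V$ to be a subspace of $H_{q_1,q_2}(\Omega_e)$ such that $T(V^\perp)\subseteq W^\perp$; since $W$ has finite dimension $d(q_2)$, the conditions ``$Tw\perp w_1,\dots,w_{d(q_2)}$'' (with $w_i$ an orthonormal basis of $W$) cut out a subspace of $H_{q_1,q_2}(\Omega_e)$ of codimension at most $d(q_2)$, so its orthogonal complement $V$ inside $H_{q_1,q_2}(\Omega_e)$ has $\dim(V)\leq d(q_2)$, and for $F\in V^\perp$ we get $TF\in W^\perp$, hence \eqref{eq:mono3_semipos}. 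Similarly, enlarging $W$ to $W+N_{q_2}$ (of dimension at most $d(q_2)+\dim(N_{q_2})$) and pulling back the orthogonality conditions through $T$, I obtain $V_+\supseteq V$ with $\dim(V_+)\leq d(q_2)+\dim(N_{q_2})$ and $TF\in(W+N_{q_2})^\perp$ for $F\in V_+^\perp$, which gives \eqref{eq:mono3_pos}.

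The one technical point to handle carefully is the claim $V\subseteq V_+$: this follows because the orthogonality conditions defining $V_+$ are a superset of those defining $V$ (we only add the constraints coming from $N_{q_2}$), so the resulting finite-codimension subspaces are nested in the reverse direction, hence their orthogonal complements satisfy $V\subseteq V_+$ as required — I would phrase the construction of $V$ and $V_+$ simultaneously to make this transparent, e.g.\ by fixing an orthonormal basis $w_1,\dots,w_{d_0}$ of $W$ and extending it to an orthonormal basis $w_1,\dots,w_{d_0},w_{d_0+1},\dots,w_{d_1}$ of $W+N_{q_2}$ and setting $V=\mathrm{span}\{P w_1,\dots,Pw_{d_0}\}$, $V_+=\mathrm{span}\{Pw_1,\dots,Pw_{d_1}\}$, where $Pw_i\in H_{q_1,q_2}(\Omega_e)$ is the Riesz representative of $F\mapsto(TF,w_i)_{H^s(\R^n)}$.

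The main obstacle — though a mild one — is bookkeeping the dimensions: one must verify that pulling the finitely many scalar orthogonality conditions back through the bounded operator $T$ produces a subspace whose codimension does not exceed the number of conditions (which is immediate, since the kernel of a map into $\R^{d}$ has codimension at most $d$), and that taking orthogonal complements within $H_{q_1,q_2}(\Omega_e)$ correctly converts ``finite codimension'' into ``finite dimension $\leq d(q_2)$'' respectively ``$\leq d(q_2)+\dim(N_{q_2})$''. No unique continuation or deep PDE input is needed here; everything reduces to the functional-analytic fact that $u_2-u_1$ has zero exterior trace together with the spectral decomposition already packaged in Lemma~\ref{Lemma Mono 2}.
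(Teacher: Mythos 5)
Your proof is correct and takes essentially the same approach as the paper: your ``Riesz representative'' construction $V=\mathrm{span}\{Pw_1,\dots,Pw_{d_0}\}$ is precisely $S^*W$ (and $V_+=S^*(W+N_{q_2})$) where $S=S_{q_2}-S_{q_1}$ is the bounded difference-of-solutions operator, which is exactly how the paper sets up $V$ and $V_+$ after invoking Lemma~\ref{Lemma Mono 2}. The dimension bookkeeping and the nesting $V\subseteq V_+$ follow identically in both arguments from $W\subseteq W+N_{q_2}$.
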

\begin{proof}
The difference of the solution operators 
\[
S:\ H_{q_1,q_2}(\Omega_e)\to H_0^s(\Omega),\quad F\mapsto (S_{q_2}-S_{q_1})F=u_2-u_1\in H_0^s(\Omega),
\]
is linear and bounded by Corollary~\ref{cor:Dirichlet_bvp}. Using Lemma \ref{Lemma Mono 2} with $q:=q_2$ we obtain a subspace
$W\subseteq H_0^s(\Omega)$ with $\dim(W)= d(q_2)$, so that \eqref{eq:mono3_semipos} holds for all $F$ with $SF\in W^\perp$
which is equivalent to $F\in (S^*W)^\perp$. Also, by Lemma \ref{Lemma Mono 2}, \eqref{eq:mono3_pos} holds for all $F$ with $SF\in (W+N_q)^\perp$
which is equivalent to $F\in (S^*(W+N_q))^\perp$. Hence, the assertion follows with $V:=S^*W$, and $V_+:=S^*(W+N_q)$.
\end{proof}

\emph{Proof of Theorem~\ref{Theorem for monotonicity}.}
This immediately follows using the Lemmas \ref{Lemma Mono 1}--\ref{Lemma Mono 3}.
\endproof

\emph{Proof of Theorem~\ref{thm:useful}.}
The monotonicity relation \eqref{eq:useful_monotonicity} immediately follows using Lemmas \ref{Lemma Mono 1}--\ref{Lemma Mono 3}.
To prove \eqref{eq:useful_bound}, we use that
\[
0=\mathscr{B}_{q_1}(u_1,w)=\mathscr{B}_{q_2}(u_2,w) \quad \text{ for all } w\in H_0^s(\Omega),
\]
to conclude that for all $D\subseteq \Omega$ containing $\supp(q_1-q_2)$
\begin{align*}
\lefteqn{\quad \lambda \norm{u_2-u_1}^2_{H^s(\R^n)}\leq \mathscr{B}_{q_2}(u_2-u_1,u_2-u_1)=-\mathscr{B}_{q_2}(u_1,u_2-u_1)}\\
&=\mathscr{B}_{q_1}(u_1,u_2-u_1) -\mathscr{B}_{q_2}(u_1,u_2-u_1)= \int_\Omega (q_1-q_2) u_1 (u_2-u_1)\dx\\
&\leq \norm{q_1-q_2}_{L^\infty(D)}\norm{u_1}_{L^2(D)}\norm{u_2-u_1}_{H^s(\R^n)}.
\end{align*}
Hence
\[
\norm{u_2}_{L^2(D)}-\norm{u_1}_{L^2(D)}\leq \norm{u_2-u_1}_{L^2(D)}\leq \frac{1}{\lambda}\norm{q_1-q_2}_{L^\infty(D)}\norm{u_1}_{L^2(D)},
\]
which yields \eqref{eq:useful_bound} with $c:=1+\frac{1}{\lambda}\norm{q_1-q_2}_{L^\infty(D)}$.
\endproof

\emph{Proof of Theorem~\ref{thm:dimensions}.}
For $q_j\in L^\infty(\Omega)$, $j=1,2$, we denote  the positive eigenvalues (counted with multiplicities) of
the compact self-adjoint operator 
\[
\iota^* \iota - \iota^* M_{q_j} \iota:\ H_0^s(\Omega)\to H_0^s(\Omega),
\quad \text{by} \quad
\lambda_1^{(j)}\geq \lambda_2^{(j)}\geq \lambda_3^{(j)}\geq \ldots.
\]
\begin{enumerate}[(a)]
\item Let $q_1\leq q_2$. Then for all $v\in H_0^s(\Omega)$ 
\begin{align*}
\left( (\iota^* \iota - \iota^* M_{q_1} \iota) v,v\right)_{H_0^s(\Omega)}
&=\int_\Omega (1-q_1)|v|^2\dx\geq \int_\Omega (1-q_2)|v|^2\dx\\
&=\left( (\iota^* \iota - \iota^* M_{q_2} \iota) v,v\right)_{H_0^s(\Omega)}.
\end{align*}
Hence, it follows from the Courant-Fischer-Weyl min-max principle, (see, e.g., \cite{lax2002functional})
that 
\begin{align*}
\lambda_{k}^{(1)}&=
\max_{X\subset H_0^s(\Omega) \atop \dim(X)=k} \min_{\ v\in X \atop \norm{v}_{H_0^s(\Omega)}=1} \left( (\iota^* \iota - \iota^* M_{q_1} \iota) v,v\right)_{H_0^s(\Omega)}\\
&\geq \max_{X\subset H_0^s(\Omega) \atop \dim(X)=k} \min_{\ v\in X \atop \norm{v}_{H_0^s(\Omega)}=1} \left( (\iota^* \iota - \iota^* M_{q_2} \iota) v,v\right)_{H_0^s(\Omega)}=\lambda_{k}^{(2)},
\end{align*} 
for all $k\in \N$, which shows $d(q_1)\geq d(q_2)$.
\item Let $q_1\in L^\infty(\Omega)$. Since $N_{q_1}=\kernel(I-\iota^* \iota + \iota^* M_{q_1} \iota)$, 
exactly $\dim(N_{q_1})$ eigenvalues of $\iota^* \iota - \iota^* M_{q_1} \iota$ are identically one, so that
\[
\ldots\geq\lambda_{d(q_1)}^{(1)}> 1=\lambda_{d(q_1)+1}^{(1)}=\ldots=\lambda^{(1)}_{d(q_1)+\dim(N_{q_1})}>\lambda^{(1)}_{d(q_1)+\dim(N_{q_1})+1}\geq \ldots.
\]

Since $\lambda_{d(q_1)}^{(1)}-1>0$ and $ 1-\lambda^{(1)}_{d(q_1)+\dim(N_{q_1})+1}>0$, we can set
\begin{equation}\label{eq:EV_def_epsilon}
\epsilon:=\frac{1}{2}\min\left\{ \lambda_{d(q_1)}^{(1)}-1,\ 1-\lambda^{(1)}_{d(q_1)+\dim(N_{q_1})+1}\right\}>0.
\end{equation}

Then for all $q_2\in L^\infty(\Omega)$ with $\norm{q_2-q_1}_{L^\infty(\Omega)}\leq \epsilon$, and all $v\in H_0^s(\Omega)$
with $\norm{v}_{H_0^s(\Omega)}=1$,
we have
that 
\begin{align*}
\left| \left( (\iota^* \iota - \iota^* M_{q_1} \iota) v,v\right)_{H_0^s(\Omega)}
- \left( (\iota^* \iota - \iota^* M_{q_2} \iota) v,v\right)_{H_0^s(\Omega)} 
\right| \leq \int_{\Omega} |q_1-q_2| |v|^2\dx\leq \epsilon.
\end{align*}
Hence, using the Courant-Fischer-Weyl min-max principle as in (a) again, we obtain that
$\left|\lambda_{k}^{(1)}-\lambda_{k}^{(2)}\right|\leq \epsilon$ for all $k\in \N$.
In particular, using the definition of $\epsilon$ in \eqref{eq:EV_def_epsilon},
$\left|\lambda_{d(q_1)}^{(1)}-\lambda_{d(q_1)}^{(2)}\right|\leq \epsilon$ yields that
\[
\lambda_{1}^{(2)}\geq\ldots\geq\lambda_{d(q_1)}^{(2)}\geq \lambda_{d(q_1)}^{(1)}-\epsilon>1,
\]
and
$\left| \lambda_{d(q_1)+\dim(N_{q_1})+1}^{(1)}-\lambda_{d(q_1)+\dim(N_{q_1})+1}^{(2)}\right|\leq \epsilon$ yields that
\[
1>\lambda^{(1)}_{d(q_1)+\dim(N_{q_1})+1}+\epsilon \geq  \lambda^{(2)}_{d(q_1)+\dim(N_{q_1})+1}\geq \lambda^{(2)}_{d(q_1)+\dim(N_{q_1})+2}\geq \ldots .
\]
It follows that only the eigenvalues $\lambda^{(2)}_{d(q_1)+1},\ldots,\lambda^{(2)}_{d(q_1)+\dim(N_{q_1})}$ 
of $\iota^* \iota - \iota^* M_{q_2} \iota$
could possibly be identically one, so that $\dim(N_{q_2})\leq \dim(N_{q_1})$ is proven.
\end{enumerate}
\endproof

\subsection{Localized potentials for the fractional Schr\"{o}dinger equation}

In this subsection, we extend the localized potentials result that was derived in \cite{harrach2019nonlocal-mono1}
for positive potentials to general $L^\infty(\Omega)$-potentials and spaces of finite codimension.
Moreover, we will show a new result on controlling two localized potentials simultaneously.
We will prove the following two theorems.

\begin{theorem}[Localized potentials]\label{thm:localized}
Let $q\in L^\infty(\Omega)$. For every measurable set $M\subseteq \Omega$ with positive measure, and every finite-dimensional subspace
$V\subseteq H_q(\Omega_e)$ there exists a sequence $\{F^k\}_{k\in \N}\subseteq V^\perp\subseteq H_q(\Omega_e)$ so that
the corresponding solutions $u^k\in H_q^s(\R^n)$ of
\begin{equation}\label{fractional}
(-\Delta)^s u + qu = 0 \quad \text{ in $\Omega$,} \quad \text{ with } u|_{\Omega_e}=F^k,
\end{equation}
fulfill
\[
\int_{M} |u^k|^2 \dx \to \infty, \quad \text{ and } \quad \int_{\Omega\setminus M} |u^k|^2 \dx \to 0.
\]
\end{theorem}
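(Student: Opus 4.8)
The plan is to use a functional-analytic duality argument in the spirit of \cite{gebauer2008localized,harrach2019nonlocal-mono1}, reformulated in terms of the ranges of two operators. First I would fix $q\in L^\infty(\Omega)$, a measurable set $M\subseteq\Omega$ of positive measure, and a finite-dimensional subspace $V\subseteq H_q(\Omega_e)$, and reduce the two-sided conclusion to a single ``non-boundedness'' statement: it suffices to produce a sequence $\{F^k\}\subseteq V^\perp$ with $\norm{u^k}_{L^2(M)}\to\infty$ while $\norm{u^k}_{L^2(\Omega\setminus M)}$ stays bounded (say $\le 1$); a standard normalization/rescaling then forces $\norm{u^k}_{L^2(\Omega\setminus M)}\to 0$ as well. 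Equivalently, writing the (restricted) solution operator as the composition of $S_q$ from Corollary~\ref{cor:Dirichlet_bvp} with the $L^2$-restrictions to $M$ and to $\Omega\setminus M$, the claim is that there is \emph{no} constant $C>0$ with
\begin{equation}\label{eq:loc_noineq}
\norm{u^{(F)}}_{L^2(M)}\le C\,\norm{u^{(F)}}_{L^2(\Omega\setminus M)}\quad\text{for all }F\in V^\perp\subseteq H_q(\Omega_e),
\end{equation}
where $u^{(F)}=S_q F$.

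Next I would prove \eqref{eq:loc_noineq} fails by a contradiction/duality argument. Assume such a $C$ exists. By a standard lemma on ranges (an estimate of the form $\norm{Ax}\le C\norm{Bx}$ is equivalent to a range inclusion $\range(A^*)\subseteq\range(B^*)$, cf.\ the functional-analytic lemma used in \cite{gebauer2008localized}), the bound \eqref{eq:loc_noineq} would imply that the range of the adjoint of $F\mapsto u^{(F)}|_M$ is contained in the range of the adjoint of $F\mapsto u^{(F)}|_{\Omega\setminus M}$ (both regarded as operators on $V^\perp$, which is itself a Hilbert space of finite codimension so all of Corollary~\ref{cor:Dirichlet_bvp} still applies). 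Concretely, for every $g\in L^2(M)$ there would exist $h\in L^2(\Omega\setminus M)$ so that the solutions $w_g,w_h\in H^s(\R^n)$ of the \emph{adjoint source problems}
\[
(-\Delta)^s w + q w = g\chi_M \ \text{ in }\Omega,\quad w|_{\Omega_e}=0,\qquad (-\Delta)^s w + q w = h\chi_{\Omega\setminus M}\ \text{ in }\Omega,\quad w|_{\Omega_e}=0,
\]
have the same Neumann data $(-\Delta)^s w_g|_{\Omega_e}=(-\Delta)^s w_h|_{\Omega_e}$ (up to the finitely many functionals coming from $V$ and from $N_q$). Then $w:=w_g-w_h$ solves $(-\Delta)^s w+qw = g\chi_M - h\chi_{\Omega\setminus M}$ in $\Omega$ with both $w|_{\Omega_e}=0$ and $(-\Delta)^sw|_{\Omega_e}=0$, so Corollary~\ref{cor:UCP}(b) gives $w\equiv 0$ in $\R^n$; feeding this back into the equation forces $g\chi_M=h\chi_{\Omega\setminus M}$ a.e.\ in $\Omega$, hence $g=0$ a.e.\ on $M$. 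Since $g\in L^2(M)$ was arbitrary and $M$ has positive measure, this is absurd, and \eqref{eq:loc_noineq} must fail, producing the desired sequence in $V^\perp$.

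The two technical points that need care, and where I expect the real work to sit, are: (i) handling the \emph{finite codimension} — restricting all solution and trace operators to $V^\perp$ (and accounting for $N_q$ via the $H_q(\Omega_e)$-formalism of Corollary~\ref{cor:Dirichlet_bvp}), and checking that removing finitely many functionals does not spoil the range-inclusion lemma, which it does not, since a finite-dimensional perturbation of the target space leaves the contradiction intact; and (ii) correctly identifying the adjoints of the maps $F\mapsto u^{(F)}|_M$ and $F\mapsto u^{(F)}|_{\Omega\setminus M}$ with the solution operators of the adjoint source problems above — this uses the variational formulation \eqref{eq:VarFormFractional}, the self-adjointness of $\mathscr{B}_q$, and the Neumann-trace identity $\langle(-\Delta)^s u|_{\Omega_e},G\rangle=\mathscr{B}_q(u,v^{(G)})$ from Lemma~\ref{lemma:NtD_and_bilinear_forms}. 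The unique continuation Theorem~\ref{thm:unique_continuation} (via Corollary~\ref{cor:UCP}) is the single indispensable ingredient that makes the contradiction work, exactly as in the positive-potential case of \cite{harrach2019nonlocal-mono1}; the novelty here is only that coercivity has been replaced by the $V^\perp$/$N_q$ bookkeeping, so the existing proof structure carries over essentially verbatim once that bookkeeping is set up.
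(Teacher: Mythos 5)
Your proposal is correct and follows essentially the same route as the paper's proof: reduce to the failure of a two-operator norm estimate on $V^\perp$, pass to a range inclusion for the adjoints via the functional-analytic lemma (extended to subspaces of finite codimension, which is exactly the paper's Lemma~\ref{lemma:functional_analysis} combined with the algebraic Lemma~\ref{lemma:algebraic}), identify the adjoints with source-problem solution operators as in the paper's Lemma~\ref{lemma:Ladjoint}, and invoke Corollary~\ref{cor:UCP}(b) on the difference to force $g=0$, contradicting that $g$ ranges over an infinite-dimensional (finite-codimension) subspace of $L^2(M)$. The only point where the paper is more explicit than you are is the careful restriction to the subspaces $V_M^\perp$, $W_M$ and $W_M'$ needed to guarantee solvability of the source problems in the resonant case and to absorb both the $V$-perturbation of the range inclusion and the finitely many Neumann functionals on $H_q(\Omega_e)^\perp$, but you correctly flagged this bookkeeping as the technical content of the argument.
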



\begin{theorem}[Simultaneously localized potentials]\label{thm:locpot2}
Let $q_1,q_2\in L^\infty(\Omega)$, and let $\supp(q_1-q_2)\subseteq M$ where $M\subseteq \Omega$ is a measurable set with positive measure.
For every finite-dimensional subspace $V\subseteq H_{q_1,q_2}(\Omega_e)$, there exists a sequence $\{F^k\}_{k\in \N}\subseteq V^\perp\subseteq H_{q_1,q_2}(\Omega_e)$ so that the corresponding solutions $u_1^k\in H^s_{q_1}(\R^n)$, $u_2^k\in H_{q_2}^s(\R^n)$, of
\begin{align*}\label{fractional_locpot2}
(-\Delta)^s u_1^k + q_1 u_1^k &= 0 \quad \text{ in $\Omega$,} \quad \text{ with } u_1^k|_{\Omega_e}=F^k,\\
(-\Delta)^s u_2^k + q_2 u_2^k &= 0 \quad \text{ in $\Omega$,} \quad \text{ with } u_2^k|_{\Omega_e}=F^k,
\end{align*}
fulfill
\begin{align*}
\int_{M} |u_1^k|^2 \dx &\to \infty, \quad  \quad \int_{\Omega\setminus M} |u_1^k|^2 \dx \to 0,\\
\int_{M} |u_2^k|^2 \dx &\to \infty, \quad  \quad \int_{\Omega\setminus M} |u_2^k|^2 \dx \to 0.
\end{align*}
\end{theorem}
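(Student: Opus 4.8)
The plan is to run the localized-potentials machinery for a suitably chosen pair of operators. First I would enlarge the given finite-dimensional $V$ to a finite-dimensional subspace $\widehat V\subseteq H_{q_1,q_2}(\Omega_e)$ that in addition contains the exceptional subspace $V_+$ from the last part of Remark~\ref{remark_monotonicity}, so that the two-sided bound $c_1\norm{u_1}_{L^2(M)}\le\norm{u_2}_{L^2(M)}\le c_2\norm{u_1}_{L^2(M)}$ holds for all $F\in\widehat V^\perp$ (this is where $\supp(q_1-q_2)\subseteq M$ enters). With $u_j:=S_{q_j}F$, consider
\[
\mathcal A:\ \widehat V^\perp\to L^2(M),\quad F\mapsto u_1|_M,\qquad
\mathcal B:\ \widehat V^\perp\to L^2(\Omega\setminus M)^2,\quad F\mapsto \big(u_1|_{\Omega\setminus M},\,u_2|_{\Omega\setminus M}\big).
\]
If there is no $C>0$ with $\norm{\mathcal A F}\le C\norm{\mathcal B F}$ for all $F\in\widehat V^\perp$, then the same functional-analytic reasoning as in the proof of Theorem~\ref{thm:localized} (see also \cite{gebauer2008localized,harrach2018helmholtz}) produces a sequence $\{F^k\}\subseteq\widehat V^\perp\subseteq V^\perp$ with $\norm{\mathcal A F^k}\to\infty$ and $\norm{\mathcal B F^k}\to 0$. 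The second property gives $\int_{\Omega\setminus M}|u_j^k|^2\dx\to 0$ for $j=1,2$, and $\int_M|u_1^k|^2\dx\to\infty$ together with the two-sided bound forces $\int_M|u_2^k|^2\dx\to\infty$; so the theorem follows once the non-domination is established.

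To prove the non-domination I would argue by contradiction. Assuming $\norm{\mathcal A F}\le C\norm{\mathcal B F}$ on $\widehat V^\perp$, the range inclusion (majorization) theorem for Hilbert space operators gives $\range(\mathcal A^*)\subseteq\range(\mathcal B^*)$ together with a bounded $\mathcal C:\ L^2(M)\to L^2(\Omega\setminus M)^2$ with $\mathcal A^*=\mathcal B^*\mathcal C$. Using the Green identity that underlies Lemma~\ref{lemma:NtD_and_bilinear_forms} and the definition of the Neumann trace, one identifies the adjoints (up to sign and the Riesz isomorphism): $\mathcal A^* g$ is the Neumann trace on $\Omega_e$ of $v^{(g)}\in H_0^s(\Omega)$ solving $(-\Delta)^s v^{(g)}+q_1 v^{(g)}=g\chi_M$ in $\Omega$, and $\mathcal B^*(h_1,h_2)$ is the Neumann trace of $\widetilde v^{(h_1)}+\widehat v^{(h_2)}$, where $\widetilde v^{(h_1)},\widehat v^{(h_2)}\in H_0^s(\Omega)$ solve $(-\Delta)^s\widetilde v^{(h_1)}+q_1\widetilde v^{(h_1)}=h_1\chi_{\Omega\setminus M}$ and $(-\Delta)^s\widehat v^{(h_2)}+q_2\widehat v^{(h_2)}=h_2\chi_{\Omega\setminus M}$ (cf.\ Lemma~\ref{lemma:solvability_Dirichlet_bvp}). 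The finite-dimensional defects caused by possible resonances of $q_1$ and $q_2$, by the projection onto $\widehat V^\perp$, and by the solvability conditions of these source problems are absorbed by passing to successive subspaces of $L^2(M)$ of finite codimension; this step is routine and proceeds as in \cite{harrach2018helmholtz,harrach2019nonlocal-mono1}.

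Now fix $g$ in the resulting finite-codimension subspace of $L^2(M)$ and set $(h_1,h_2):=\mathcal C g$. Then $z:=\widetilde v^{(h_1)}+\widehat v^{(h_2)}-v^{(g)}\in H_0^s(\Omega)$ has vanishing Neumann trace on $\Omega_e$; since $z$ also vanishes on $\Omega_e$ and lies in $H^s_\Delta(\R^n)$, Lemma~\ref{lemma:Neumann_trace} gives $(-\Delta)^s z=0$ in $\Omega_e$, so $z$ and $(-\Delta)^s z$ both vanish on the open set $\Omega_e$, and the unique continuation property, Theorem~\ref{thm:unique_continuation}, yields $z\equiv 0$ in $\R^n$. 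Inserting $v^{(g)}=\widetilde v^{(h_1)}+\widehat v^{(h_2)}$ into the three source equations and using $v^{(g)}-\widetilde v^{(h_1)}=\widehat v^{(h_2)}$ gives $g\chi_M=(h_1+h_2)\chi_{\Omega\setminus M}+(q_1-q_2)\widehat v^{(h_2)}$ in $\Omega$; since $\supp(q_1-q_2)\subseteq M$, restricting to $\Omega\setminus M$ forces $h_1+h_2=0$ there, and restricting to $M$ then gives $g=(q_1-q_2)\,\widehat v^{(h_2)}|_M$. But $g\mapsto(q_1-q_2)\widehat v^{(h_2)}|_M$ factors through $\mathcal C$, the bounded source-solution map $h_2\mapsto\widehat v^{(h_2)}$ into $H_0^s(\Omega)$, and the restriction to $M$, hence through the compact embedding $H_0^s(\Omega)\hookrightarrow L^2(\Omega)$, and is therefore compact. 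Since it agrees with the identity on a subspace of finite codimension of the infinite-dimensional space $L^2(M)$, this is a contradiction, and the non-domination — hence the theorem — follows.

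The step I expect to be the main obstacle is this final contradiction. In the single-potential Theorem~\ref{thm:localized} the corresponding identity reads $g\chi_M=h\chi_{\Omega\setminus M}$, which immediately forces $g=0$ because the two source terms are supported on disjoint sets; here the presence of the second potential makes the $M$-source reappear multiplied by $q_1-q_2$ through the solution $\widehat v^{(h_2)}$, so a pointwise conclusion is no longer available and one must instead exploit that ``solve the source problem, then restrict to $M$'' is a compact operation — this is precisely the new ingredient of simultaneously localized potentials. A secondary, purely technical difficulty is the systematic handling of the finitely many exceptional dimensions — from $N_{q_1},N_{q_2}$, from $\widehat V$, and from the solvability conditions — which forces the whole argument to be carried out on nested finite-codimension subspaces of $L^2(M)$; this requires care but introduces no new ideas beyond those in \cite{harrach2018helmholtz,harrach2019nonlocal-mono1}.
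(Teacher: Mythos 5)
Your proposal is correct and follows essentially the same route as the paper's proof: virtual measurement operators $L_{M,q_1}$, $L_{\Omega\setminus M,q_1}$, $L_{\Omega\setminus M,q_2}$ on a finite-codimension subspace, a functional-analytic domination criterion (your Douglas factorization $\mathcal A^*=\mathcal B^*\mathcal C$ is equivalent to the range inclusion used via Lemma~\ref{lemma:functional_analysis}), the Neumann-trace identification of adjoints, unique continuation from vanishing Cauchy data, the identity $g=(q_1-q_2)\widehat v^{(h_2)}|_M$ on a finite-codimension subspace of $L^2(M)$, and the final contradiction from compactness of the embedding $H_0^s(\Omega)\hookrightarrow L^2(\Omega)$. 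The ``routine'' step you flag — absorbing the finitely many exceptional directions coming from resonances, the projection to $\widehat V^\perp$, and the source-problem solvability conditions — is exactly what the paper's definitions of $V_M$, $W_M$, $W_{M}'$ (and their $\Omega\setminus M$ counterparts) together with Lemma~\ref{lemma:algebraic} carry out.
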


To prove Theorem~\ref{thm:localized} and \ref{thm:locpot2}, we follow the general line of reasoning developed by one of the authors in \cite{gebauer2008localized}. We formulate the energy terms as norms of operator evaluations and characterize their adjoints and the ranges of their adjoints using the unique continuation property in Section \ref{subsect:UCP}. We then prove the two theorems using a functional analytic relation between norms of operator evaluations and ranges of their adjoints. 

We start by defining the so-called virtual measurement operators.
\begin{lemma}\label{lemma:Ladjoint}
For $q\in L^\infty(\Omega)$, a measurable set $M\subseteq \Omega$ with positive measure, and 
a subspace $H\subseteq H_q(\Omega_e)$ with finite codimension, we define the operator
\begin{align*}
L_{M,q}:&\ H\to L^2(M),\quad F\mapsto u|_M,
\end{align*}
where $u\in H_q^s(\R^n)$ solves 
\begin{equation}\label{eq:Ladjoint_frac}
(-\Delta)^s u + qu = 0 \quad \text{ in $\Omega$,} \quad \text{ with } u|_{\Omega_e}=F.
\end{equation}
Furthermore, let $V_M:=\{ u|_M:\ u\in N_q\}$. 

Then $L_{M,q}$ is a linear bounded operator, $\dim(V_M)<\infty$, and for all $g\in V_M^\perp\subseteq L^2(M)$
and $F\in H$
\begin{align}\label{equ: Ladjoint_frac}
\left( L_{M,q}^* g, F\right)_{H(\Omega_e)} 
=- \langle (-\Delta)^{s}v|_{\Omega_{e}},F \rangle,
\end{align}
where $v\in H_q^s(\R^n)$ solves $(-\Delta)^s v + q v=g\chi_M$ in $\Omega$, and $v|_{\Omega_e}=0$.
\end{lemma}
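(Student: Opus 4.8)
The plan is to verify that $L_{M,q}$ is well-defined, bounded, and linear directly from Corollary~\ref{cor:Dirichlet_bvp}: the solution operator $S_q: H_q(\Omega_e)\to H_q^s(\R^n)$ is linear and bounded, and $L_{M,q}$ is the composition of $S_q$ (restricted to $H$) with the bounded restriction map $H_q^s(\R^n)\hookrightarrow L^2(\R^n)\to L^2(M)$. The finite-dimensionality of $V_M=\{u|_M:\ u\in N_q\}$ is immediate since $N_q$ is finite-dimensional by Lemma~\ref{lemma:solvability_Dirichlet_bvp}(b) and $V_M$ is the image of $N_q$ under a linear map. The heart of the lemma is the adjoint formula \eqref{equ: Ladjoint_frac}, so the bulk of the proof is a computation of $(L_{M,q}^*g,F)_{H(\Omega_e)}=(g,L_{M,q}F)_{L^2(M)}$ using the variational formulations and a symmetry/integration-by-parts argument.

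First I would note that the equation $(-\Delta)^s v + qv = g\chi_M$ in $\Omega$ with $v|_{\Omega_e}=0$ is solvable precisely when the solvability condition \eqref{eq:solvability_condition} holds, i.e. when $\int_\Omega g\chi_M w\dx = \int_M gw\dx = 0$ for all $w\in N_q$; this is exactly the hypothesis $g\in V_M^\perp\subseteq L^2(M)$, since $w|_M$ ranges over $V_M$ as $w$ ranges over $N_q$. So $v\in H_q^s(\R^n)$ (normalizing into the orthogonal complement of $N_q$) exists; $v$ is unique up to $N_q$ but that ambiguity does not affect the Neumann trace $(-\Delta)^s v|_{\Omega_e}$ pairing against $F$, which I would also want to remark. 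Then for $F\in H$ with solution $u=S_qF$ (so $u|_{\Omega_e}=F$, $(-\Delta)^s u+qu=0$ in $\Omega$), I compute: by the characterization of the Neumann trace just before the definition of $\Lambda(q)$, $\langle (-\Delta)^s v|_{\Omega_e},F\rangle = \mathscr{B}_q(v,u^{(F)})$ for any $u^{(F)}$ with $u^{(F)}|_{\Omega_e}=F$ — but here I have to be slightly careful because $v$ solves an inhomogeneous equation. The clean route is: since $u-u^{(F)}\in H_0^s(\Omega)$ is a valid test function, $\langle(-\Delta)^s v|_{\Omega_e},F\rangle = \mathscr{B}_q(v,u) - \int_\Omega (g\chi_M) u\dx = \mathscr{B}_q(v,u) - \int_M gu\dx$, using the variational formulation for $v$ tested against $u-u^{(F)}\in H_0^s(\Omega)$ combined with the Neumann trace identity. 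On the other hand, using the variational formulation for $u$ tested against $v\in H_q^s(\R^n)$ — wait, $v$ is not in $H_0^s(\Omega)$ in general — so instead I pair the equation for $u$ against $v$ viewing $v|_{\Omega_e}=0$, hence $v\in H_0^s(\Omega)$, giving $\mathscr{B}_q(u,v)=0$. By symmetry of $\mathscr{B}_q$, $\mathscr{B}_q(v,u)=0$, so $\langle(-\Delta)^s v|_{\Omega_e},F\rangle = -\int_M gu\dx = -(g, L_{M,q}F)_{L^2(M)}$, which is exactly \eqref{equ: Ladjoint_frac} after identifying $(L_{M,q}^*g,F)_{H(\Omega_e)}=(g,L_{M,q}F)_{L^2(M)}$.

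The main obstacle I anticipate is bookkeeping around the resonant case: one must check that $v|_{\Omega_e}=0$ indeed means $v\in H_0^s(\Omega)$ (true by the definition of $H_0^s(\Omega)$ as the kernel of the Dirichlet trace), that the solvability condition for $v$ matches $g\in V_M^\perp$ exactly, and that the right-hand side of \eqref{equ: Ladjoint_frac} is independent of the choice of representative $v$ modulo $N_q$ — this last point follows because for $w\in N_q\subseteq H_0^s(\Omega)$ one has $\langle(-\Delta)^s w|_{\Omega_e},F\rangle = \mathscr{B}_q(w, u^{(F)})$ and, since $u^{(F)}-u\in H_0^s(\Omega)$ and $w\in N_q$, $\mathscr{B}_q(w,u^{(F)})=\mathscr{B}_q(w,u)=\mathscr{B}_q(u,w)=0$ because $u$ solves the homogeneous equation. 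Everything else is routine application of Lemmas~\ref{lemma:solvability_Dirichlet_bvp}, \ref{lemma:NtD_and_bilinear_forms}, and the Neumann-trace identity, plus the Riesz representation identification of $H(\Omega_e)$ with its appropriate inner-product structure.
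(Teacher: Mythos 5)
Your proof is correct and essentially the same as the paper's: both compute $(L_{M,q}^*g,F)_{H(\Omega_e)}=\int_M gu\dx$, use the variational formulation together with the symmetry of $\mathscr{B}_q$, and apply the Neumann-trace definition \eqref{eq:Neumanntrace}. The only difference is organizational --- the paper splits $u=u^{(0)}+u^{(F)}$ and shows $\int_M g u^{(0)}\dx=\mathscr{B}_q(v,u^{(0)})=-\mathscr{B}_q(v,u^{(F)})$, whereas you plug $u$ itself into the trace formula (using $u|_{\Omega_e}=F$) and invoke $\mathscr{B}_q(v,u)=\mathscr{B}_q(u,v)=0$ directly, a slightly cleaner rearrangement of the same computation; your checks that the solvability condition for $v$ matches $g\in V_M^\perp$ and that the formula is insensitive to the $N_q$-ambiguity are appropriate sanity checks the paper handles implicitly via $v\in H_q^s(\R^n)$.
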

\begin{proof}
By Lemma \ref{lemma:solvability_Dirichlet_bvp} and Corollary \ref{cor:Dirichlet_bvp}, we have that $L_{M,q}$ is a linear bounded operator, $\dim(V_M)<\infty$, and for all $g\in V_M^\perp\subseteq L^2(M)$ there exists a solution $v\in H_q^s(\Omega)$ of $(-\Delta)^s v + q v=g\chi_M$ in $\Omega$, and $v|_{\Omega_e}=0$. Then $v\in H_0^s(\Omega)$ fulfills
\[
\mathscr B_q(v,w)=\int_M g w\dx \quad \text{ for all } w\in H_0^s(\Omega).
\]

For $F\in H$ let $u=u^{(0)}+u^{(F)}$ solve \eqref{eq:Ladjoint_frac} as in Lemma \ref{lemma:solvability_Dirichlet_bvp}. Then
\begin{align*}
\left( L_{M,q}^* g, F\right)_{H(\Omega_e)} &=\int_M g (L_{M,q}F) \dx = \int_M g u \dx
= \int_M g (u^{(0)}+u^{(F)}) \dx\\
&= \mathscr B_q(v,u^{(0)}) + \int_M g u^{(F)} \dx
= -\mathscr{B}_{q}(v,u^{(F)})+ \int_M g u^{(F)} \dx\\
&=- \int_{\R^n} (-\Delta)^{s/2} v \cdot (-\Delta)^{s/2} u^{(F)} \dx
+ \int_\Omega (-\Delta)^{s} v \cdot v^{(F)} \dx\\ 
&= -\langle (-\Delta)^{s}v|_{\Omega_{e}}, F\rangle.
\end{align*}
\end{proof}

We now proceed similarly to \cite{harrach2018helmholtz} to extend the functional analytic relation between the norms of two operators and the ranges of their adjoints from \cite[Lemma 2.5]{gebauer2008localized}, \cite[Corollary 3.5]{fruhauf2007detecting} to spaces of finite codimension.

\begin{lemma}\label{lemma:functional_analysis}
Let $X$, $Y$ and $Z$ be Hilbert spaces, $A_1:\ X\to Y$ and $A_2:\ X\to Z$ be linear bounded operators,
and let $N\subseteq X$ be a finite dimensional subspace. Then 
\[
\range(A_1^*)\subseteq \range(A_2^*)+N \quad \text{ if and only if } \quad \exists c>0:\ \norm{A_1x}\leq c\norm{A_2 x} \quad \forall x\in N^\perp,
\]
where $\range(A)$ denotes the range of the linear bounded operator $A$.
\end{lemma}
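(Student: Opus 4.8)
The plan is to deduce this from the classical range/norm-inequality duality of Gebauer \cite[Lemma 2.5]{gebauer2008localized} (see also \cite[Corollary 3.5]{fruhauf2007detecting}), which handles the case $N=\{0\}$, by reducing the general statement to that case via a single finite-rank modification. First I would pick an orthonormal basis $(n_1,\dots,n_m)$ of $N$ and introduce the auxiliary operator
\[
\widetilde A_2:\ X\to Z\times \R^m,\quad x\mapsto \left(A_2 x,\ \left((x,n_1)_X,\dots,(x,n_m)_X\right)\right),
\]
which is linear and bounded. Its kernel is $\kernel(A_2)\cap N^\perp$ and, crucially, $\|\widetilde A_2 x\|^2 = \|A_2 x\|^2$ for all $x\in N^\perp$, while $\kernel(\widetilde A_2)^\perp \supseteq N$. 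The adjoint is $\widetilde A_2^*(z,t)=A_2^*z+\sum_{j=1}^m t_j n_j$, so that $\range(\widetilde A_2^*)=\range(A_2^*)+N$.

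With this in hand the equivalence is immediate from the $N=\{0\}$ version applied to the pair $(A_1,\widetilde A_2)$, \emph{provided} the norm inequality is correctly transported between "$\forall x\in X$" (as in the classical lemma) and "$\forall x\in N^\perp$" (as in our statement). For the direction "$\Leftarrow$": assume $\|A_1x\|\le c\|A_2x\|$ for all $x\in N^\perp$. I would first replace $A_1$ by $A_1 P$, where $P$ is the orthogonal projection onto $N^\perp$; then $\|A_1Px\|\le c\|A_2 Px\|\le c\|\widetilde A_2 x\|$ holds for \emph{all} $x\in X$, so the classical lemma gives $\range((A_1P)^*)\subseteq\range(\widetilde A_2^*)=\range(A_2^*)+N$. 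Since $(A_1P)^*=P A_1^*$ and $A_1^* = PA_1^* + (I-P)A_1^*$ with $(I-P)A_1^*$ mapping into $N$, we get $\range(A_1^*)\subseteq \range(A_2^*)+N+N=\range(A_2^*)+N$, as desired. For the direction "$\Rightarrow$": assume $\range(A_1^*)\subseteq\range(A_2^*)+N=\range(\widetilde A_2^*)$. The classical lemma applied to $(A_1,\widetilde A_2)$ yields $c>0$ with $\|A_1x\|\le c\|\widetilde A_2 x\|$ for all $x\in X$; restricting to $x\in N^\perp$ and using $\|\widetilde A_2 x\|=\|A_2 x\|$ there gives exactly $\|A_1x\|\le c\|A_2x\|$ for all $x\in N^\perp$.

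The only point requiring a little care — and the step I expect to be the main (though modest) obstacle — is the bookkeeping with the projection $P$ in the "$\Leftarrow$" direction, specifically making sure the decomposition $A_1^*=PA_1^*+(I-P)A_1^*$ is used to absorb the error term into $N$ without enlarging the claimed range beyond $\range(A_2^*)+N$; this works precisely because $\range((I-P)A_1^*)\subseteq N$ and because $N$ is a linear subspace so $N+N=N$. Everything else is a direct invocation of the cited functional-analytic lemma together with the elementary identities for $\widetilde A_2$ and its adjoint. I would therefore state the proof as: "reduce to \cite[Lemma 2.5]{gebauer2008localized} by replacing $A_2$ with $\widetilde A_2$ above and, for one implication, $A_1$ with $A_1P$", and spell out the two short verifications just described.
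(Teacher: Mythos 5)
Your forward direction is correct and is essentially the paper's argument in different clothing: identifying $N\cong\R^m$ via the orthonormal basis, your operator $\widetilde A_2$ is the block column $\binom{A_2}{P_N}$ and $\widetilde A_2^*$ is the block row $(A_2^*\ \ P_N)$, which is exactly what the paper feeds into Gebauer's lemma, and the restriction to $N^\perp$ at the end is the same observation ($\kernel(P_N)=N^\perp$).

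In the backward direction, however, your chain contains a false step: the inequality $\|A_2Px\|\le\|\widetilde A_2x\|$ does not hold in general. For example, with $X=\R^2$, $N=\mathrm{span}(e_1)$, $P$ the projection onto $\mathrm{span}(e_2)$, and $A_2x=x_1+x_2$, take $x=(-\tfrac12,1)$: then $\|A_2Px\|=1$ but $\|\widetilde A_2x\|^2=(x_1+x_2)^2+x_1^2=\tfrac12$. What is true is only $\|A_2Px\|\le(1+\|A_2\|)\,\|\widetilde A_2x\|$, obtained by writing $A_2Px=A_2x-A_2(I-P)x$ and noting $\|A_2x\|\le\|\widetilde A_2x\|$ and $\|(I-P)x\|\le\|\widetilde A_2x\|$. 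Inserting this extra constant repairs your argument; the rest of your bookkeeping with $(A_1P)^*=PA_1^*$ and $A_1^*=PA_1^*+(I-P)A_1^*$ is fine. The paper avoids the issue altogether by pairing $A_1P$ with $A_2P$ rather than with $\widetilde A_2$: from the hypothesis one gets $\|A_1Px\|\le c\|A_2Px\|$ for all $x$ with the same constant, Gebauer's lemma then gives $\range(PA_1^*)\subseteq\range(PA_2^*)$, and both ranges are absorbed into $\range(A_2^*)+N$ via the decomposition $T^*=PT^*+(I-P)T^*$. You may want to adopt that cleaner pairing, or at least supply the correct multiplicative constant.
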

\begin{proof}
For both implications, we use that there exists an orthogonal projection operator $P_N:\ X\to X$ with
\[
\range(P_N)=N,\quad \kernel(P_N)=\range(I-P_N)=N^\perp, \quad \text{ and } \quad P_N^2=P_N=P_N^*.
\]

To show the first implication, let $\range(A_1^*)\subseteq \range(A_2^*)+N$.
Using block operator matrix notation we then have that
\[
\range(A_1^*)\subseteq \range(A_2^*)+\range(P_N)=\range\left( \begin{pmatrix}A_2^* & P_N\end{pmatrix} \right).
\]
Hence, by \cite[Lemma 2.5]{gebauer2008localized} there exists $c>0$ so that
\[
\norm{A_1x}^2\leq c^2\left\| \begin{pmatrix}A_2\\ P_N\end{pmatrix}x \right\|^2=c^2\norm{A_2x}^2+c^2\norm{P_N x}^2
\quad \text{ for all } x\in X,
\]
and thus
\[
\norm{A_1x}\leq c\norm{A_2x} \quad \text{ for all } x\in \kernel(P_N)=N^\perp.
\]

To show the converse implication, let $c>0$ and $\norm{A_1x}\leq c\norm{A_2 x}$ for all $x\in N^\perp$.
Then
\[
\norm{A_1(I-P_N)x}\leq c\norm{A_2(I-P_N)x} \quad \text{ for all } x\in X, 
\]
so that \cite[Lemma 2.5]{gebauer2008localized} yields that 
\[
\range ((I-P_N) A_1^*)\subseteq \range((I-P_N) A_2^*).
\]
Hence, 
\[
\range(A_1^*)\subseteq \range ((I-P_N) A_1^*) + N\subseteq \range((I-P_N) A_2^*) +N \subseteq \range(A_2^*)+N.
\]
\end{proof}

For the application of Lemma~\ref{lemma:functional_analysis}, the following elementary (and purely algebraic) observation will also be useful.
\begin{lemma}\label{lemma:algebraic}
Let $X$ and $Y$ be vector spaces, let $A:\ X\to Y$ be linear, and let $Y'$ be a subspace of $Y$. 
The following two statements are equivalent:
\begin{enumerate}[(a)]
\item There exists a finite dimensional subspace $N\subseteq Y$ with $A(X)\subseteq Y'+N$.
\item There exists a subspace $X'\subseteq X$ with finite codimension so that $A(X')\subseteq Y'$.
\end{enumerate}
Moreover, for all subspaces $X'\subseteq X$ with finite codimension, there exists a
finite dimensional subspace $N\subseteq Y$ with $A(X)\subseteq A(X')+N$, and $\dim(A(X'))=\infty$
holds if $\dim A(X)=\infty$.
\end{lemma}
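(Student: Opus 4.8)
\textbf{Proof plan for Lemma~\ref{lemma:algebraic}.}
The plan is to prove the equivalence of (a) and (b) by a direct linear-algebra argument, then establish the two ``moreover'' claims as easy consequences of the construction. First I would prove that (a) implies (b). Given a finite-dimensional $N\subseteq Y$ with $A(X)\subseteq Y'+N$, I would pick a (not necessarily finite) complement and observe that it suffices to remove, from $X$, finitely many directions that are responsible for the $N$-part of the image. Concretely, consider the composition of $A$ with the canonical projection $\pi\colon Y'+N\to (Y'+N)/Y'$; since $(Y'+N)/Y'$ is finite-dimensional (being a quotient of $N$), the map $\pi\circ A\colon X\to (Y'+N)/Y'$ has finite-dimensional image, hence its kernel $X':=\kernel(\pi\circ A)$ has finite codimension in $X$, and by construction $A(X')\subseteq Y'$.

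Next I would prove that (b) implies (a). Given $X'\subseteq X$ of finite codimension with $A(X')\subseteq Y'$, choose a finite-dimensional complement $U\subseteq X$ so that $X=X'\oplus U$. Then $N:=A(U)$ is finite-dimensional, and for any $x\in X$, writing $x=x'+u$ with $x'\in X'$, $u\in U$, we get $Ax=Ax'+Au\in Y'+N$, so $A(X)\subseteq Y'+N$ as desired.

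For the ``moreover'' part: given any subspace $X'\subseteq X$ of finite codimension, pick a finite-dimensional complement $U$ with $X=X'\oplus U$ and set $N:=A(U)$; then the same splitting argument gives $A(X)\subseteq A(X')+N$. Finally, if $\dim A(X)=\infty$ but $\dim A(X')<\infty$, then $A(X)\subseteq A(X')+A(U)$ would be a sum of two finite-dimensional spaces, hence finite-dimensional, a contradiction; so $\dim A(X')=\infty$.

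I do not expect any genuine obstacle here, since the statement is purely algebraic; the only point requiring a little care is that $X$, $Y$, $X'$ need not admit a preferred complement, so I would phrase everything via an algebraic direct-sum decomposition $X=X'\oplus U$ (which always exists) and via the finite-dimensional quotient $(Y'+N)/Y'$ for the first implication, rather than via orthogonal complements.
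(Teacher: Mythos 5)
Your proof is correct and follows essentially the same strategy as the paper's: for (a)$\Rightarrow$(b) both arguments precompose $A$ with a linear map onto a finite-dimensional space and take the kernel as $X'$, the only difference being that you use the canonical quotient projection $\pi\colon Y'+N\to (Y'+N)/Y'$ whereas the paper builds a projection $P\colon Y'+N\to N$ with $\kernel(P)\subseteq Y'$ by extending a basis of $N$ within $Y'$; your variant sidesteps that basis-extension step and is slightly cleaner, but the underlying idea is identical. The proofs of (b)$\Rightarrow$(a) and the ``moreover'' claims coincide with the paper's.
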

\begin{proof}
Let $A(X)\subseteq Y'+N$, where $Y'$ and $N$ are subspaces of $Y$, and $\dim(N)<\infty$.
Since any basis of $N$ can be extended to a Hamel basis of $Y'+N$,
there exists a linear projection 
\[
P:\ Y'+N\to N\quad \text{ with }\quad \range(P)=N, \quad \text{ and } \quad \kernel(P)\subseteq Y'.
\]
Define $X':=\{ x\in X:\ PAx=0\}=\kernel(PA)$. Then 
\[
\mathrm{codim}(X')=\dim(\range(PA))\leq \dim(\range(P))=\dim(N),
\]
and by definition $A(X')\subseteq \kernel(P)\subseteq Y'$. This shows that (a) implies (b).

Clearly, (b) implies (a) by setting $N:=A(X'')$ where $X''$ is a linear complement of $X'$ in $X$.

Moreover, if $X'$ is a subspace of finite codimension then (b) holds with $Y'=A(X')$, so that (a)
implies the existence of a finite dimensional subspace $N\subseteq Y$ with $A(X)\subseteq A(X')+N$.
Clearly, this also implies that $\dim(A(X'))=\infty$ if $\dim(A(X))=\infty$.
\end{proof}

Now, we are ready to prove Theorem~\ref{thm:localized} and Theorem~\ref{thm:locpot2}.

\emph{Proof of Theorem~\ref{thm:localized}.}
Let $q\in L^\infty(\Omega)$, $M\subseteq \Omega$ be a measurable set with positive measure, and $V\subseteq H_q(\Omega_e)$ be a finite-dimensional subspace. As in Lemma~\ref{lemma:Ladjoint}, we define
the virtual measurement operators
\begin{alignat*}{2}
L_{M,q}:&\ H_q(\Omega_e)\to L^2(M),\quad && F\mapsto u|_M, \quad \text{ and }\\
L_{\Omega\setminus M,q}:&\ H_q(\Omega_e)\to L^2(\Omega \setminus M),\quad && F\mapsto u|_{\Omega \setminus M},
\end{alignat*}
where $u\in H_q^s(\R^n)$ solves 
\begin{equation*}
(-\Delta)^s u + qu = 0 \quad \text{ in $\Omega$} \quad \text{ with } u|_{\Omega_e}=F.
\end{equation*}

Then the assertion follows if we can show that there exists a sequence $\{F^k\}_{k\in \N}\subseteq V^\perp\subseteq H_q(\Omega_e)$ so that
\[
\norm{L_{M,q} F^k}_{L^2(M)}  \to \infty, \quad \text{ and } \quad \norm{L_{\Omega\setminus M,q} F^k}_{L^2(\Omega\setminus M)} \to 0.
\]
By a simple normalization argument (cf., e.g., the proof of \cite[Corollary 3.5]{harrach2019nonlocal-mono1}), it suffices to show that 
\[
\not\exists c>0:\ \norm{L_{M,q} F}_{L^2(M)}\leq c\norm{L_{\Omega\setminus M,q} F}_{L^2(\Omega\setminus M)}\quad \text{ for all } F\in V^\perp\subseteq H_q(\Omega_e).
\]
This follows from Lemma~\ref{lemma:functional_analysis} if we can show that
\begin{equation}\label{eq:locpot1_toshow}
\range(L_{M,q}^*)\not\subseteq \range(L_{\Omega\setminus M,q}^*)+V.
\end{equation}
We prove this by contradiction and assume that $\range(L_{M,q}^*)\subseteq \range(_{\Omega\setminus M,q}^*)+V$.

As in Lemma~\ref{lemma:Ladjoint}, define 
\[
V_M:=\{ u|_M:\ u\in N_q\}, \quad \text{ and } \quad V_{\Omega\setminus M}:=\{ u|_{\Omega\setminus M}:\ u\in N_q\}.
\]
Then $V_M^\perp$ and $V_{\Omega\setminus M}^\perp$ have finite codimension in $L^2(M)$ and $L^2(\Omega\setminus M)$, respectively.
Moreover, we define their subspaces
\begin{align*}
W_M&:=\left\{g\in V_M^\perp:\ \langle (-\Delta)^s v_M|_{\Omega_{e}},F\rangle=0\text{ for all } F\in H_q(\Omega_e)^\perp \right\},\\
W_{\Omega\setminus M}&:=\left\{g\in V_{\Omega\setminus M}^\perp:\ \langle (-\Delta)^s v_{\Omega\setminus M}|_{\Omega_{e}},F\rangle=0\text{ for all } F\in H_q(\Omega_e)^\perp \right\},
\end{align*}
where $v_M,v_{\Omega\setminus M}\in H_q^s(\R^n)$ are the solutions of 
\begin{alignat}{3}
\label{eq:locpot_proof_vM}(-\Delta)^s v_M + q v_M&=g_M\chi_M & \quad \text{ in $\Omega$}, && \qquad v_M|_{\Omega_e}&=0,\\
\label{eq:locpot_proof_vOM} (-\Delta)^s v_{\Omega\setminus M} + q v_{\Omega\setminus M}&=g_{\Omega\setminus M}\chi_{\Omega\setminus M}
  & \quad \text{ in $\Omega$}, && \qquad v_{\Omega\setminus M}|_{\Omega_e}&=0.
\end{alignat}
Then also $W_M$ and $W_{\Omega\setminus M}$ are subspaces of $L^2(M)$, resp., $L^2(\Omega\setminus M)$, with finite codimension, since the conditions in their definitions are equivalent to a system of finitely many homogeneous linear equations.

From Lemma~\ref{lemma:algebraic} we then obtain that
\[
L_{M,q}^*(W_M)\subseteq \range(L_{M,q}^*)\subseteq \range(L_{\Omega\setminus M,q}^*)+V
\subseteq L_{\Omega\setminus M,q}^*(W_{\Omega\setminus M})+V',
\]
with a finite-dimensional space $V'$. Moreover, using Lemma~\ref{lemma:algebraic} again,
there exists a subspace $W_M'\subseteq W_M$ with finite codimension in $W_M$ and thus in $L^2(M)$, so
that
\begin{equation}\label{eq:locpot_proof_range}
L_{M,q}^*(W_M')\subseteq L_{\Omega\setminus M,q}^*(W_{\Omega\setminus M}).
\end{equation}

Let $g_M\in W_M'$. Then, by \eqref{eq:locpot_proof_range}, 
there exists $g_{\Omega\setminus M}\in W_{\Omega\setminus M}$, so that the 
corresponding solutions $v_M,v_{\Omega\setminus M}\in H_q^s(\R^n)$ of \eqref{eq:locpot_proof_vM} and \eqref{eq:locpot_proof_vOM}
fulfill
\begin{equation*}
-\langle (-\Delta)^{s}v_M|_{\Omega_{e}}, F\rangle=\left( L_{M,q}^*g_M, F\right)_{H(\Omega_e)}=-\langle (-\Delta)^{s}v_{\Omega\setminus M}|_{\Omega_{e}}, F\rangle
\text{ for all } F\in H_q(\Omega_e),
\end{equation*}
where we have utilized \eqref{equ: Ladjoint_frac}.
By definition of $W_M$ and $W_{\Omega\setminus M}$, it also holds that
\[
\langle (-\Delta)^{s}v_M|_{\Omega_{e}}, F\rangle=0=\langle (-\Delta)^{s}v_{\Omega\setminus M}|_{\Omega_{e}}, F\rangle
\quad \text{ for all } F\in H_q(\Omega_e)^\perp.
\]
Hence $v:=v_M-v_{\Omega\setminus M}$ fulfills
\[
(-\Delta)^s v + q v=g_M\chi_M- g_{\Omega\setminus M}\chi_{\Omega\setminus M} \quad \text{ in $\Omega$}
\]
with vanishing Cauchy data $v|_{\Omega_e}=0$ and $(-\Delta)^{s}v|_{\Omega_{e}}=0$. From the unique continuation result in Corollary~\ref{cor:UCP}(b) it follows that $v\equiv 0$ in $\R^n$. But this yields $g_M=0$, and since this arguments holds for all $g_M\in W_M'$, it follows
that $W_M'=\{0\}$ which contradicts the fact that $W_M'$ is a subspace of finite codimension in the infinite dimensional space $L^2(M)$.
Hence, \eqref{eq:locpot1_toshow} and thus the assertion is proven.\quad \endproof

\emph{Proof of Theorem~\ref{thm:locpot2}.}
Let $q_1,q_2\in L^\infty(\Omega)$, and let $\supp(q_1-q_2)\subseteq M$ where $M\subseteq \Omega$ is a measurable set with positive measure. We first note that it suffices to
show that for all finite-dimensional subspaces $V\subseteq H_{q_1,q_2}(\Omega_e)$,
there exists a sequence $\{F^k\}_{k\in \N}\subseteq V^\perp\subseteq H_{q_1,q_2}(\Omega_e)$ with
\begin{align}\label{eq:locpot2_toshow1}
\int_{M} |u_1^k|^2 \dx &\to \infty, \quad \text{ and } \quad \int_{\Omega\setminus M} \left(|u_1^k|^2+|u_2^k|^2\right) \dx \to 0,
\end{align}
since $\int_{M} |u_1^k|^2 \dx \to \infty$ implies $\int_{M} |u_2^k|^2 \dx \to \infty$ on a subspace of 
finite codimension in $H_{q_1,q_2}(\Omega_e)$ by Remark \ref{remark_monotonicity}.

We define as in Lemma~\ref{lemma:Ladjoint},
\begin{alignat*}{2}
L_{M,q_1}:&\ H_{q_1,q_2}(\Omega_e)\to L^2(M),\quad && F\mapsto u_1|_M, \\
L_{\Omega\setminus M,q_1}:&\ H_{q_1,q_2}(\Omega_e)\to L^2(\Omega \setminus M),\quad && F\mapsto u_1|_{\Omega \setminus M}, \quad \text{ and }\\
L_{\Omega\setminus M,q_2}:&\ H_{q_1,q_2}(\Omega_e)\to L^2(\Omega \setminus M),\quad && F\mapsto u_2|_{\Omega \setminus M},
\end{alignat*}
where $u_j\in H_{q_j}^s(\R^n)$ solves (for $j=1,2$)
\[
(-\Delta)^s u_j + q_j u_j = 0 \quad \text{ in $\Omega$} \quad \text{ with } u_j|_{\Omega_e}=F.
\]
Thus \eqref{eq:locpot2_toshow1} can be reformulated as 
\[
\norm{L_{M,q_1}F^k}_{L^2(M)}\to \infty \quad \text{ and } \quad 
\left\|\begin{pmatrix}L_{\Omega\setminus M,q_1}\\ L_{\Omega\setminus M,q_2}\end{pmatrix} F^k\right\|_{L^2(\Omega\setminus M)\times L^2(\Omega\setminus M)}\to 0.
\]
Hence, using Lemma \ref{lemma:functional_analysis} as in the proof of Theorem~\ref{thm:localized}, the assertion follows if we can show that
\begin{equation}\label{eq:simlocpot_toshow}
\range(L_{M,q_1}^*)\not\subseteq \range\left(\begin{pmatrix}L_{\Omega\setminus M,q_1}^* & L_{\Omega\setminus M,q_2}^*\end{pmatrix}\right)+V
=\range(L_{\Omega\setminus M,q_1}^*)+\range(L_{\Omega\setminus M,q_2}^*)+V.
\end{equation}
We argue by contradiction and assume that
\begin{equation*}
\range(L_{M,q_1}^*)\subseteq \range(L_{\Omega\setminus M,q_1}^*)+\range(L_{\Omega\setminus M,q_2}^*)+V.
\end{equation*}

As in the proof of Theorem~\ref{thm:localized}, we define (for $j=1,2$)
\[
V_{M,q_1}:=\{ u|_M:\ u\in N_{q_1}\}, \quad  V_{\Omega\setminus M,q_j}:=\{ u|_{\Omega\setminus M}:\ u\in N_{q_j}\}.
\]
and 
\begin{align*}
W_{M,q_1}&:=\left\{g\in V_{M,q_1}^\perp:\ \langle (-\Delta)^s v_{M,q_1}|_{\Omega_{e}},F\rangle=0\text{ for all } F\in H_{q_1,q_2}(\Omega_e)^\perp \right\},\\
W_{\Omega\setminus M,q_j}&:=\left\{g\in V_{\Omega\setminus M,q_j}^\perp:\ \langle (-\Delta)^s v_{\Omega\setminus M,q_j}|_{\Omega_{e}},F\rangle=0\text{ for all } F\in H_{q_1,q_2}(\Omega_e)^\perp \right\},
\end{align*}
where $v_{M,q_1},v_{\Omega\setminus M,q_j}\in H_q^s(\R^n)$ are the solutions of 
\begin{alignat}{3}
\label{eq:locpot2_proof_vM}(-\Delta)^s v_{M,q_1} + q_1 v_{M,q_1}&=g_{M,q_1}\chi_M  \quad & \text{ in $\Omega$}, && \qquad v_{M,q_1}|_{\Omega_e}&=0,\\
\label{eq:locpot2_proof_vOM} (-\Delta)^s v_{\Omega\setminus M,q_j} + q_j v_{\Omega\setminus M,q_j}&=g_{\Omega\setminus M,q_j}\chi_{\Omega\setminus M}
 \quad & \text{ in $\Omega$}, && \qquad v_{\Omega\setminus M,q_j}|_{\Omega_e}&=0,
\end{alignat}
for $j=1,2$. Then, as in the proof of Theorem~\ref{thm:localized},
we obtain using lemma~\ref{lemma:algebraic} that 
\begin{equation}\label{eq:locpot2proof_subseteq}
L_{M,q_1}^*(W_{M,q_1}')\subseteq L_{\Omega\setminus M,q_1}^*(W_{\Omega\setminus M,q_1})+L_{\Omega\setminus M,q_2}^*(W_{\Omega\setminus M,q_2})
\end{equation}
with a subspace $W_{M,q_1}'\subseteq W_{M,q_1}$ that has finite codimension in $L^2(M)$.

Let $g_{M,q_1}\in W_{M,q_1}'$. As in the proof of Theorem~\ref{thm:localized}, it then follows from
\eqref{eq:locpot2proof_subseteq} and the definition of $W_{M,q_1}$, $W_{\Omega\setminus M,q_j}$, and $W_{\Omega\setminus M,q_2}$,
that there exist
$g_{q_j,\Omega\setminus M}\in W_{\Omega\setminus M,q_j}$ ($j=1,2$), so that
the solutions $v_{M,q_1}$, $v_{\Omega\setminus M,q_1}$, and $v_{\Omega\setminus M,q_2}$ of \eqref{eq:locpot2_proof_vM} and \eqref{eq:locpot2_proof_vOM} fulfill
\[
(-\Delta)^{s}v_{q_1,M}|_{\Omega_{e}}=(-\Delta)^{s}v_{q_1,\Omega\setminus M}|_{\Omega_{e}} + (-\Delta)^{s}v_{q_2,\Omega\setminus M}|_{\Omega_{e}}.
\]
It follows that $v:=v_{q_1,\Omega\setminus M}+v_{q_2,\Omega\setminus M}-v_{q_1,M}$ solves
\[
(-\Delta)^s v + q_1 v=g_{q_1,\Omega\setminus M}\chi_{\Omega\setminus M}+(q_1-q_2)v_{q_2,\Omega\setminus M} + g_{q_2,\Omega\setminus M}\chi_{\Omega\setminus M} -g_{q_1,M}\chi_M 
\]
with zero Cauchy data. Hence, by Corollary~\ref{cor:UCP}(b), $v=0$, and with $\supp(q_1-q_2)\subseteq M$ this also implies
\[
(q_1-q_2)v_{q_2,\Omega\setminus M}-g_{q_1,M}=0.
\]
Since $v_{q_2,\Omega\setminus M}\in H_0^s(\Omega)$, and the above arguments hold
for all $g_{M,q_1}\in W_{M,q_1}'$, it follows that
\[
W_{M,q_1}'\subseteq (M_{q_1}-M_{q_2})\iota (H_0^s(\Omega))\subseteq L^2(\Omega).
\]
Hence, the range of the compact operator $(M_{q_1}-M_{q_2})\iota$ would be a subspace of finite codimension 
in $L^2(\Omega)$ and thus closed. But the range of a compact operator can only be closed if it is finite dimensional 
(cf., e.g., \cite[Theorem.~4.18]{rudin1991functional}), so that this contradicts the infinite dimensionality of $L^2(\Omega)$.
Thus, \eqref{eq:simlocpot_toshow} is proven. \endproof

\begin{remark}
Our proof of the existence of simultaneously localized potentials followed the approach from \cite{gebauer2008localized}
that is based on a functional analytic relation between norms of operator evaluations and ranges of their adjoints. 
For some applications, cf., \cite{harrach2018helmholtz,harrach2018localizing}, and also in the first part of this work \cite{harrach2019nonlocal-mono1}, the existence of localized potentials also followed from Runge approximations arguments.
It is an interesting open question whether this alternative route of directly using Runge approximation could also yield 
an alternative proof of the existence of simultaneously localized potentials.
\end{remark}

\section{Converse monotonicity, uniqueness and inclusion detection}\label{Section 4}

Using the localized potentials and monotonicity relations from the last section, 
we can now extend the results from \cite{harrach2019nonlocal-mono1} to the case of a general potential $q\in L^\infty(\Omega)$.

\subsection{Converse monotonicity and the Calderón problem}

We first derive an if-and-only-if monotonicity relation between the potential and the DtN operators.
\begin{theorem}\label{Thm:if-and-only-if monotonicity}
Let $n\in \N$, $\Omega\subset \R^n$ be a Lipschitz bounded open set and $s\in (0,1)$. For any two potential $q_1,q_2 \in L^\infty(\Omega)$, we have 
	\begin{align}\label{converse mono relations}
		q_1 \geq q_2 \quad \text{ if and only if } \quad \Lambda(q_1) \geq_{d(q_2)} \Lambda(q_2)
		\quad \text{ if and only if } \quad \Lambda(q_1) \geq_{\text{fin}} \Lambda(q_2),
	\end{align}
	where $d(q_2)$ is the integer given in Section \ref{Section 3}.
\end{theorem}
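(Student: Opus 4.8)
The plan is to prove the chain of equivalences in \eqref{converse mono relations} by establishing the cyclic implications
\[
q_1 \geq q_2 \;\Longrightarrow\; \Lambda(q_1) \geq_{d(q_2)} \Lambda(q_2) \;\Longrightarrow\; \Lambda(q_1) \geq_{\text{fin}} \Lambda(q_2) \;\Longrightarrow\; q_1 \geq q_2.
\]
The first implication is already contained in Theorem~\ref{Theorem for monotonicity}, since $q_1 \geq q_2$ makes the integral term $\int_\Omega (q_1-q_2)|u_1|^2\dx$ nonnegative, so \eqref{eq:monotonicity} directly gives $\Lambda(q_1) \geq_{d(q_2)} \Lambda(q_2)$. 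The second implication is immediate from Definition~\ref{def:generalized_Loewner}, as $\leq_{d}$ trivially implies $\leq_{\text{fin}}$.

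The real content is the converse (third) implication: I would argue by contraposition. Suppose $q_1 \geq q_2$ fails, i.e.\ there is a measurable set $M \subseteq \Omega$ of positive Lebesgue measure and some $\varepsilon>0$ with $q_1 - q_2 \leq -\varepsilon$ a.e.\ on $M$ (such an $M$ exists whenever $q_1 \not\geq q_2$). I want to show $\Lambda(q_1) \geq_{\text{fin}} \Lambda(q_2)$ fails, i.e.\ that for \emph{every} finite-dimensional subspace $W \subseteq H_{q_1,q_2}(\Omega_e)$ there exists $F \in W^\perp$ with $\langle(\Lambda(q_1)-\Lambda(q_2))F,F\rangle < 0$. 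The tool is the interchanged version of Theorem~\ref{thm:useful} (see Remark~\ref{remark_monotonicity}): there is a finite-dimensional $V_+ \subseteq H_{q_1,q_2}(\Omega_e)$, a constant $c>0$, and $\lambda>0$ so that for all $F$ in the orthogonal complement of $V_+$,
\[
\langle(\Lambda(q_1)-\Lambda(q_2))F,F\rangle \leq \int_\Omega (q_1-q_2)|u_2|^2\dx,
\]
and moreover $\norm{u_1}_{L^2(D)} \leq c\norm{u_2}_{L^2(D)}$ for $D\supseteq\supp(q_1-q_2)$. Here I should take $D = \Omega$ (or any open set containing $\supp(q_1-q_2)$). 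Then I apply the simultaneously localized potentials Theorem~\ref{thm:locpot2} with the finite-dimensional subspace $V := W + V_+$: this yields a sequence $\{F^k\} \subseteq V^\perp \subseteq W^\perp \cap V_+^\perp$ whose solutions $u_2^k$ satisfy $\int_M |u_2^k|^2\dx \to \infty$ while $\int_{\Omega\setminus M}(|u_1^k|^2 + |u_2^k|^2)\dx \to 0$.

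Putting this together: on $M$ we have $q_1 - q_2 \leq -\varepsilon$, and on $\Omega\setminus M$ we bound $|q_1-q_2| \leq \norm{q_1-q_2}_{L^\infty(\Omega)}$, so
\[
\int_\Omega (q_1-q_2)|u_2^k|^2\dx \leq -\varepsilon \int_M |u_2^k|^2\dx + \norm{q_1-q_2}_{L^\infty(\Omega)}\int_{\Omega\setminus M}|u_2^k|^2\dx \to -\infty,
\]
which forces $\langle(\Lambda(q_1)-\Lambda(q_2))F^k,F^k\rangle \to -\infty$ for $F^k \in V_+^\perp$. In particular, for $k$ large, $F^k \in W^\perp$ gives a strictly negative quadratic form value, contradicting $\Lambda(q_1) \geq_{\text{fin}} \Lambda(q_2)$. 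Since $W$ was an arbitrary finite-dimensional subspace, $\Lambda(q_1) \geq_{\text{fin}} \Lambda(q_2)$ cannot hold. I expect the main subtlety to be bookkeeping the finite-codimension subspaces correctly — one must ensure the localized potentials sequence lands simultaneously in the complement of the monotonicity-excluded space $V_+$ and in the complement of the test space $W$, which is handled cleanly by feeding $W + V_+$ into Theorem~\ref{thm:locpot2} — and making sure the estimate on $\Omega\setminus M$ genuinely goes to zero so it cannot cancel the divergence coming from $M$. A minor point is that when $q_1 \not\geq q_2$ one only knows $(q_1-q_2) < 0$ on a positive-measure set, not on an open set, but $M$ is only required to be measurable of positive measure for Theorem~\ref{thm:locpot2}, so no regularization is needed.
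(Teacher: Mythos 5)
Your overall plan is right and matches the paper's own argument: the forward implications come from Theorem~\ref{Theorem for monotonicity} and the definition, and the converse is proved by a localized-potentials contradiction using the interchanged bound $\langle(\Lambda(q_1)-\Lambda(q_2))F,F\rangle \leq \int_\Omega (q_1-q_2)|u_2|^2\dx$ on a subspace of finite codimension. The only substantive difference is a genuine misapplication of a theorem.

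You invoke Theorem~\ref{thm:locpot2} to produce the sequence $\{F^k\}$, but that theorem requires $\supp(q_1-q_2)\subseteq M$. Your $M$ is a positive-measure set on which $q_1-q_2\leq-\varepsilon$, and in general this set does \emph{not} contain $\supp(q_1-q_2)$ (indeed, if it did, you would already have $q_1\leq q_2$ everywhere). So Theorem~\ref{thm:locpot2} cannot be applied as stated. You anticipated the worry that $M$ is only measurable rather than open, but the actual hypothesis you fail to check is the support containment. Fortunately, your argument only uses the conclusions about $u_2^k$, namely $\int_M|u_2^k|^2\dx\to\infty$ and $\int_{\Omega\setminus M}|u_2^k|^2\dx\to 0$; these follow directly from the single-potential localized potentials result Theorem~\ref{thm:localized} applied with $q=q_2$ (after enlarging the excluded finite-dimensional space so that the resulting $F^k$ also lie in $H_{q_1,q_2}(\Omega_e)$). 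Replacing Theorem~\ref{thm:locpot2} by Theorem~\ref{thm:localized} repairs the proof and reproduces the paper's argument. You also pull in the $\|u_1\|_{L^2(D)}\leq c\|u_2\|_{L^2(D)}$ estimate from the interchanged Theorem~\ref{thm:useful}, but never use it; the interchanged form of Theorem~\ref{Theorem for monotonicity} already gives the only inequality you need, so Theorem~\ref{thm:useful} and Theorem~\ref{thm:locpot2} can both be dropped from this proof.
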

\begin{proof}
Via Theorem \ref{Theorem for monotonicity}, $q_1 \geq q_2$ implies $\Lambda(q_1) \geq_{d(q_2)} \Lambda(q_2)$, and clearly
$\Lambda(q_1) \geq_{d(q_2)} \Lambda(q_2)$ implies $\Lambda(q_1) \geq_{\text{fin}} \Lambda(q_2)$. The assertion is proven if we can show that
$\Lambda(q_1) \geq_{\text{fin}} \Lambda(q_2)$ implies $q_1\geq q_2$ a.e. in $\Omega$.

Let $\Lambda(q_1) \geq_{\text{fin}} \Lambda(q_2)$. Using this together with Remark~\ref{remark_monotonicity} and 
that the intersection of subspaces with finite codimension still has finite codimension, we obtain a subspace $V\subseteq H_{q_1,q_2}(\Omega_e)$ so that 
\begin{align}\label{eq:iff_mono}
0 \leq \Langle \left(\Lambda(q_1)-\Lambda(q_2)\right) F, F \Rangle 
\leq \int_\Omega (q_1-q_2)|u_2|^2 \dx \quad \text{ for all } F\in V^\perp\subseteq H_{q_1,q_2}(\Omega_e),
\end{align}
where $u_2\in H_{q_2}^s(\R^n)$ solves 
\begin{equation}\label{eq:iff_u2}
(-\Delta)^s u_2 + q_2 u_2=0  \text{ in $\Omega$,} \quad \text{ and } \quad u_2|_{\Omega_e}=F.
\end{equation}
To show that this implies $q_1\geq q_2$ a.e. in $\Omega$, we argue by contradiction and assume
that there exists $\delta>0$ and a positive measurable set $M\subset\Omega$ such that $q_2-q_1\geq \delta$ on $M$. 
Then utilizing the localized potentials from Theorem \ref{thm:localized} we obtain a sequence $(F^k)_{k\in \N}\subset V^\perp\subseteq H_{q_1,q_2}(\Omega_e)$
where the corresponding solutions of \eqref{eq:iff_u2} with $F=F^k$ solve
\[
\int_{M} |u_2^k|^2 \dx \to \infty, \quad \text{ and } \quad \int_{\Omega\setminus M} |u_2^k|^2 \dx \to 0.
\]
But together with \eqref{eq:iff_mono} this yields to the contradiction
\begin{align*}
0 \leq \int_\Omega (q_1-q_2)|u_2^k|^2 \dx \leq -\delta \int_M |u_2^k|^2 \dx + \norm{q_1-q_2}_{L^\infty(\Omega)} \int_{\Omega\setminus M} |u_2^k|^2 \dx
\to -\infty,
\end{align*}
which proves $q_1\geq q_2$ a.e. in $\Omega$.
\end{proof}

\begin{corollary}\label{cor:Calderon}
Let $n\in \N$, $\Omega\subset \R^n$ be a bounded Lipschitz domain and $s\in (0,1)$.
For any two potentials $q_{0},q_{1}\in L_{+}^{\infty}(\Omega)$,
\begin{equation*}
q_{0}= q_{1}\mbox{ if and only if }\Lambda(q_0)\stackrel{\text{fin}}{=}\Lambda(q_1).
\end{equation*}
\end{corollary}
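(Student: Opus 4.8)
The plan is to deduce this directly from the if-and-only-if monotonicity relation in Theorem~\ref{Thm:if-and-only-if monotonicity}, so essentially no new work is required. The forward implication is trivial: if $q_0 = q_1$, then $N_{q_0} = N_{q_1}$, hence $H_{q_0,q_1}(\Omega_e) = H_{q_0}(\Omega_e) = H_{q_1}(\Omega_e)$, and $\Lambda(q_0)$ and $\Lambda(q_1)$ coincide on this space; in particular $\Lambda(q_0) \leq \Lambda(q_1)$ and $\Lambda(q_1) \leq \Lambda(q_0)$, so $\Lambda(q_0) \stackrel{\text{fin}}{=} \Lambda(q_1)$ with exceptional subspace $W = \{0\}$.

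For the converse I would unwind Definition~\ref{def:generalized_Loewner}. The relation $\Lambda(q_0) \stackrel{\text{fin}}{=} \Lambda(q_1)$ means, by definition, that $\Lambda(q_0) \leq_{\text{fin}} \Lambda(q_1)$ and $\Lambda(q_1) \leq_{\text{fin}} \Lambda(q_0)$ (a single finite-dimensional subspace $W \subseteq H_{q_0,q_1}(\Omega_e)$ on whose orthogonal complement the quadratic form of $\Lambda(q_1)-\Lambda(q_0)$ vanishes witnesses both one-sided inequalities simultaneously). Rewriting each with the larger operator on the left, this says $\Lambda(q_1) \geq_{\text{fin}} \Lambda(q_0)$ and $\Lambda(q_0) \geq_{\text{fin}} \Lambda(q_1)$. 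Applying Theorem~\ref{Thm:if-and-only-if monotonicity} to the first of these (with $q_1$ in the role of the larger potential and $q_0$ the smaller) yields $q_1 \geq q_0$ a.e.\ in $\Omega$; applying it to the second, with the roles of the two potentials exchanged, yields $q_0 \geq q_1$ a.e.\ in $\Omega$. Combining the two pointwise inequalities gives $q_0 = q_1$ a.e.\ in $\Omega$, which is the claim.

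The only things that need care are bookkeeping: that the symmetric relation $\stackrel{\text{fin}}{=}$ genuinely splits into the two one-sided statements $\leq_{\text{fin}}$, and that Theorem~\ref{Thm:if-and-only-if monotonicity} is invoked in each direction with the labels $q_1, q_2$ assigned consistently. There is no real obstacle here: the substantive content — and in particular the converse direction relying on the localized potentials of Theorem~\ref{thm:localized} — has already been established in Theorem~\ref{Thm:if-and-only-if monotonicity}, and this corollary is merely its specialization to a pair of comparisons. Note finally that the hypothesis $q_0, q_1 \in L_+^\infty(\Omega)$ only ensures the absence of resonances, so that $\Lambda(q_0)$ and $\Lambda(q_1)$ are defined on all of $H(\Omega_e)$; it is not otherwise used, and the statement in fact remains valid for general $q_0, q_1 \in L^\infty(\Omega)$.
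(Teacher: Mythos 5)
Your proof is correct and follows the same route as the paper, which simply cites Theorem~\ref{Thm:if-and-only-if monotonicity} as immediate; you merely spell out the two applications (once in each direction) that the paper leaves implicit. Your closing observation is also accurate: the statement of Corollary~\ref{cor:Calderon} needlessly restricts to $L_+^\infty(\Omega)$ (likely a leftover from the first part of the series), whereas Theorem~\ref{Thm:if-and-only-if monotonicity} already covers all of $L^\infty(\Omega)$.
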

\begin{proof}
This follows immediately from Theorem~\ref{Thm:if-and-only-if monotonicity}.
\end{proof}

\subsection{A monotonicity-based reconstruction formula}

In \cite{harrach2019nonlocal-mono1}, we considered positive potentials $q\in L^\infty_+(\Omega)$, where $L^\infty_+(\Omega)$ denotes 
the set of all $L^\infty(\Omega)$-functions with positive essential infima.
We showed that $q\in L^\infty_+(\Omega)$ can be reconstructed from
$\Lambda(q)$ by taking the supremum of all positive density one simple functions $\psi$ with $\Lambda(\psi)\leq \Lambda(q)$.
The space of density one simple functions is defined by
\begin{align*}
\Sigma &:=\textstyle \left\{ \psi=\sum_{j=1}^m a_j \chi_{M_j}:\ a_j\in \mathbb{R},\ \text{$M_j\subseteq \Omega$ is a density one set} \right\},
\end{align*}
where we call a subset $M\subseteq \Omega$ a \emph{density one set} if it is non-empty, measurable and has Lebesgue density $1$ in 
all $x\in M$. Note that density one simple functions can be regarded as simple functions where function values that are only attained on a null set are replaced by
zero, and that, by the Lebesgue's density theorem, every measurable
set agrees almost everywhere with a density one set, so that every simple function agrees with a density one simple
function almost everywhere. For our results, it is important to control the values on null sets since these values 
might still affect the supremum when the supremum is taken over uncountably many functions.

For general potentials we obtain the following reconstruction formula.

\begin{theorem}\label{thm:constructive} Let $n\in \N$, $\Omega\subset \R^n$ be a bounded Lipschitz domain and $s\in (0,1)$. A potential $q\in L^{\infty}(\Omega)$
is uniquely determined by $\Lambda(q)$ via the following
formula
\begin{align*}
q(x)
&=\sup\{ \psi(x):\ \psi\in \Sigma,\ \Lambda(\psi)\leq_\text{fin} \Lambda(q) \}
+\inf\{ \psi(x):\ \psi\in \Sigma,\ \Lambda(\psi)\geq_\text{fin} \Lambda(q) \}\\
&=\sup\{ \psi(x):\ \psi\in \Sigma,\ \Lambda(\psi)\leq_{d(\psi)} \Lambda(q) \}
+\inf\{ \psi(x):\ \psi\in \Sigma,\ \Lambda(\psi)\geq_{d(q)} \Lambda(q) \}
\end{align*}
for a.e. $x\in \Omega$.
\end{theorem}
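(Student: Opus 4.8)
The plan is to first use the if-and-only-if monotonicity relation of Theorem~\ref{Thm:if-and-only-if monotonicity} to translate the operator inequalities into pointwise inequalities between potentials, and then to carry out a purely measure-theoretic computation with density one simple functions. Applying Theorem~\ref{Thm:if-and-only-if monotonicity} with $q_1 = q$ and $q_2 = \psi$ shows that, for $\psi \in \Sigma$,
\[
\psi \leq q \text{ a.e.} \quad \Longleftrightarrow \quad \Lambda(\psi) \leq_{d(\psi)} \Lambda(q) \quad \Longleftrightarrow \quad \Lambda(\psi) \leq_{\text{fin}} \Lambda(q),
\]
while applying it with $q_1 = \psi$ and $q_2 = q$ shows that
\[
\psi \geq q \text{ a.e.} \quad \Longleftrightarrow \quad \Lambda(\psi) \geq_{d(q)} \Lambda(q) \quad \Longleftrightarrow \quad \Lambda(\psi) \geq_{\text{fin}} \Lambda(q).
\]
In particular the index sets occurring in the first and in the second line of the asserted formula coincide, so it suffices to establish
\[
q(x) = \sup\{ \psi(x) :\ \psi \in \Sigma,\ \psi \leq q \text{ a.e.} \} + \inf\{ \psi(x) :\ \psi \in \Sigma,\ \psi \geq q \text{ a.e.} \}
\]
for a.e.\ $x \in \Omega$; uniqueness of $q$ from $\Lambda(q)$ is then immediate, since the right-hand side only involves $\Lambda(q)$.

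The second step is to evaluate the two extrema at Lebesgue points of $q$, which form a set of full measure in $\Omega$. I would prove that at every such point $x_0$ both
\[
\sup\{ \psi(x_0) :\ \psi \in \Sigma,\ \psi \leq q \text{ a.e.} \} = \max(q(x_0), 0)
\]
and
\[
\inf\{ \psi(x_0) :\ \psi \in \Sigma,\ \psi \geq q \text{ a.e.} \} = \min(q(x_0), 0)
\]
hold; since $\max(q(x_0),0) + \min(q(x_0),0) = q(x_0)$, this finishes the proof. For the upper bound on the supremum I would use that a density one simple function attains each of its nonzero values only on a set of Lebesgue density one: if $\psi \leq q$ a.e.\ and $\psi(x_0) = a \neq 0$, then $\{ \psi = a \}$ has density one at $x_0$, and the Lebesgue point property of $q$ together with $a \leq q$ a.e.\ on $\{ \psi = a \}$ forces $a \leq q(x_0)$; if $\psi(x_0) = 0$, the bound $\psi(x_0) \leq \max(q(x_0),0)$ is trivial. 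For the matching lower bound I would exhibit explicit competitors in $\Sigma$: when $q(x_0) > 0$, pick a density one set $M_\epsilon \subseteq \{ q > q(x_0) - \epsilon \}$ with $x_0 \in M_\epsilon$ and set $\psi_\epsilon := -\norm{q}_{L^\infty(\Omega)}\chi_\Omega + ( q(x_0) - \epsilon + \norm{q}_{L^\infty(\Omega)} )\chi_{M_\epsilon} \in \Sigma$, which satisfies $\psi_\epsilon \leq q$ a.e.\ and $\psi_\epsilon(x_0) = q(x_0) - \epsilon$; when $q(x_0) \leq 0$, the function $\psi := -\norm{q}_{L^\infty(\Omega)}\chi_{\Omega \setminus \{ x_0 \}} \in \Sigma$ satisfies $\psi \leq q$ a.e.\ but $\psi(x_0) = 0$. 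The assertion about the infimum then follows by applying the one about the supremum to $-q \in L^\infty(\Omega)$ and to the competitors $-\psi$, using that $\Sigma$ is closed under $\psi \mapsto -\psi$.

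I expect the main difficulty to lie in the careful handling of the pointwise values of density one simple functions on Lebesgue-null sets; this is precisely why $\Sigma$ is built from density one sets rather than from arbitrary measurable sets, which ensures that a value attained only on a null set is pinned to $0$ and cannot be inflated. That bookkeeping was already set up for positive potentials in \cite{harrach2019nonlocal-mono1}; the only genuinely new point here is that $\Sigma$ now also contains non-positive functions, so that the supremum recovers the positive part of $q$ and the infimum its negative part with sign---which is exactly why both a supremum and an infimum enter the formula. No additional PDE analysis is required, since all of the analytic content (monotonicity, the localized potentials of Theorem~\ref{thm:localized}, and the unique continuation principle) has already been absorbed into Theorem~\ref{Thm:if-and-only-if monotonicity}.
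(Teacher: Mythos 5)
Your proposal is correct and follows essentially the same route as the paper. The paper also reduces the theorem to the purely measure-theoretic identity via Theorem~\ref{Thm:if-and-only-if monotonicity} (with the same choices $q_1=q,\ q_2=\psi$ and vice versa, giving the same two equivalences), and also uses the decomposition $q=\max\{q,0\}-\max\{-q,0\}$ together with the symmetry $\psi\mapsto-\psi$ of $\Sigma$ to obtain the sup/inf formula from a single auxiliary lemma about the supremum (Lemma~\ref{lemma:sup_simple_functions}). The only real difference is in how that auxiliary lemma is proved: the paper first constructs a sequence of simple functions via the standard simple-function approximation, upgrades them to elements of $\Sigma$, and then establishes the reverse inequality by contradiction using Lusin's theorem and approximate continuity (together with \cite[Lemma~4.3]{harrach2019nonlocal-mono1} to pin down the value of a density one simple function on a density one set). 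You instead argue directly at Lebesgue points of $q$, proving the upper bound via the density-one property of the level set $\{\psi=a\}$ at a point where $\psi$ attains the nonzero value $a$, and the lower bound via explicit competitor functions in $\Sigma$. These two arguments are equivalent in substance---Lebesgue points and points of approximate continuity coincide up to a null set, and your explicit competitors play the role of the approximation sequence---so there is no meaningful difference in approach, only in exposition of the measure-theoretic step.
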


To prove Theorem~\ref{thm:constructive}, we first show the following lemma.
\begin{lemma}\label{lemma:sup_simple_functions}
For each function $q\in L^\infty(\Omega)$, and $x\in \Omega$ a.e., we have that
\begin{align*}
\max\{q(x),0\}&=\sup\{ \psi(x):\ \psi\in \Sigma \text{ with } \psi\leq q \}.
\end{align*}
\end{lemma}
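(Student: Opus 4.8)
The plan is to prove the pointwise identity
\[
\max\{q(x),0\}=\sup\{ \psi(x):\ \psi\in \Sigma \text{ with } \psi\leq q \}
\]
for almost every $x\in\Omega$ by establishing the two inequalities ``$\geq$'' and ``$\leq$'' separately, and then discarding a suitable null set on which the density-one bookkeeping could fail.

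For the inequality ``$\geq$'', note that any $\psi\in\Sigma$ with $\psi\leq q$ a.e.\ is a simple function, hence $\psi(x)\leq \mathrm{ess\,sup}_{\Omega}q<\infty$, and moreover, since $\psi\leq q$ a.e.\ and $\psi$ takes only finitely many values, I would argue that at every $x$ lying in one of the density-one pieces $M_j$ of $\psi$ one actually has $\psi(x)=a_j\leq q(x)$ \emph{up to a null set}: if $a_j>\mathrm{ess\,sup}_{M_j}q$ on a density-one set $M_j$ this contradicts $\psi\le q$ a.e.\ on the positive-measure set $M_j$. Together with $\psi(x)\in\{a_1,\dots,a_m,0\}\ni 0$ (the constant $0$ always being attainable), this gives $\psi(x)\leq\max\{q(x),0\}$ for a.e.\ $x$, hence the supremum is $\leq\max\{q(x),0\}$ a.e.

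For the reverse inequality ``$\leq$'', the idea is, for a.e.\ fixed $x_0$, to construct an explicit competitor $\psi\in\Sigma$ with $\psi\leq q$ and $\psi(x_0)$ close to $\max\{q(x_0),0\}$. If $q(x_0)\leq 0$, the zero function already works and gives $\psi(x_0)=0=\max\{q(x_0),0\}$. If $q(x_0)>0$, pick $\varepsilon>0$ small and set $a:=q(x_0)-\varepsilon>0$ (for $x_0$ with $q(x_0)$ large enough; smaller $\varepsilon$ otherwise). The crucial point is to choose the density-one set on which $\psi$ takes the value $a$. Let $M:=\{x\in\Omega:\ q(x)>a\}$; this is measurable, and by Lebesgue's density theorem almost every point of $M$ is a density-one point of $M$. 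Restricting to such $x_0$ (and since this excludes only a null set), the density-one part $\widetilde M$ of $M$ is a density-one set containing $x_0$, and $\psi:=a\,\chi_{\widetilde M}$ lies in $\Sigma$, satisfies $\psi\leq q$ a.e.\ (as $\psi=a<q$ on $\widetilde M\subseteq M$ up to a null set, and $\psi=0\le\max\{q,0\}$... here one needs $q\ge 0$ on the set where one wants $\psi=0\le q$, which may fail, so instead take $\psi:=a\chi_{\widetilde M}-\|q\|_{L^\infty}\chi_{\Omega\setminus \widetilde M\cap\{q<0\}}$, or more simply observe that it is enough to have $\psi\le q$ and one can set $\psi=0$ wherever $q\ge 0$ and $\psi=a$ on $\widetilde M$), and has $\psi(x_0)=a=q(x_0)-\varepsilon$. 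Letting $\varepsilon\to 0$ shows the supremum is $\geq q(x_0)=\max\{q(x_0),0\}$ at a.e.\ such $x_0$.

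The main obstacle, and the reason the statement is phrased with density-one simple functions rather than arbitrary simple functions, is exactly the pointwise (not a.e.) nature of the claim: one must be careful that the null sets thrown away in the two inequalities — the set where some competitor $\psi$ violates $\psi\le q$ pointwise, and the set of non-density points of the level sets $\{q>a\}$ for a countable dense family of values $a$ — together still form a null set, so that the identity holds at a.e.\ $x$. I would handle this by first fixing a countable dense set of thresholds $a\in\mathbb Q_{>0}$, taking the union of the corresponding bad null sets, and then checking that outside this fixed null set both inequalities hold for that $x$ simultaneously; the density-one requirement on the pieces $M_j$ is what makes ``$x\in M_j\Rightarrow \psi(x)\le q(x)$'' hold without exceptions on the good set. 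This is routine measure theory once set up correctly, so I do not expect genuine difficulty, only care.
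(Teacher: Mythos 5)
Your construction of the lower bound $\sup\geq q(x)$ a.e.\ is fine and is a variant of the paper's: you build explicit competitors from the level sets $\{q>a\}$ for rational $a$ and take their density-one parts, whereas the paper instead takes a generic simple-function approximation and modifies it on a null set, together with the fixed competitor $\psi_x=-\|q\|_{L^\infty}\chi_{\Omega\setminus\{x\}}$ to handle $q(x)\le 0$. Both routes work, and you correctly noticed that the competitor must also be made $\le q$ where $q<0$.

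The gap is in the upper bound $\sup\leq\max\{q(x),0\}$, which is the direction the paper spends nearly all of its effort on. The paper argues by contradiction: if the bad set $M$ has positive measure, Lusin's theorem gives a point $\widehat x\in M$ where $\max\{q,0\}$ is approximately continuous; choosing $\psi\in\Sigma$ with $\psi\le q$ a.e.\ and $\psi(\widehat x)$ too large, and $\psi(\widehat x)>0$, the crucial ingredient is the companion paper's Lemma~4.3, which guarantees a density-one set $M''\ni\widehat x$ on which $\psi\equiv\psi(\widehat x)$; then $M'\cap M''$ has positive measure (again Lemma~4.3) and produces a positive-measure contradiction with $\psi\le q$ a.e. Your sketch glosses over exactly this second ingredient. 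You write as if $\psi$ sits on disjoint density-one ``pieces'' $M_j$ and $x\in M_j$ forces $\psi(x)=a_j$, but $\Sigma$ as defined in the paper allows overlapping $M_j$. For overlapping pieces it is \emph{not} automatic that $\{\psi=\psi(\widehat x)\}$ has density $1$ at $\widehat x$ (consider $\psi=\chi_{(0,1/2)}+\chi_{(1/4,3/4)}$ at $\widehat x=1/4$: then $\psi(\widehat x)=1$ but $\{\psi=1\}$ has density $1/2$ at $1/4$), so the statement ``the density-one requirement on the pieces $M_j$ is what makes $x\in M_j\Rightarrow\psi(x)\le q(x)$ hold without exceptions on the good set'' is not justified and, read literally, is false. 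This structural fact about $\Sigma$ is precisely what Lemma~4.3 of the companion paper supplies, and your proposal does not replace it by any argument. A secondary point: for the upper bound you would need density points of the sublevel sets $\{q<a\}$ (or approximate upper semicontinuity of $q$), not of $\{q>a\}$; the sign matters because the contradiction you want is a positive-measure set where $\psi>q$, i.e.\ where $q$ is small near $\widehat x$.

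In short: the $\sup\ge q$ half is correct (different construction, same spirit); the $\sup\le\max\{q,0\}$ half is missing the key structural lemma about density-one simple functions ($\psi(\widehat x)>0$ implies $\psi$ is constant on a density-one set containing $\widehat x$), and your ``countable thresholds'' bookkeeping, while in the right ballpark for the continuity side, does not repair that omission.
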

\begin{proof}
Let $q\in L^\infty(\Omega)$. By the standard simple function approximation lemma, cf., e.g., \cite{royden1988real}, there exists a sequence
$(\psi_k),{k\in \N}$ of simple functions with 
\begin{equation}\label{eq:simple_fct_approx_aux}
q(x)-\frac{1}{k} \leq \psi_k(x)\leq q(x)
\end{equation}
for all $k\in \N$ and $x\in \Omega$. Since every simple function agrees with a density one simple function almost everywhere, we can change 
the values of the countably many functions $\psi_k$ on a null set, to obtain $\psi_k\in \Sigma$ for which
\eqref{eq:simple_fct_approx_aux} holds almost everywhere. Hence, for a.e. $x\in \Omega$,
\begin{align*}
q(x)=\lim_{k\to \infty} \psi_k (x) \leq \sup\{ \psi(x):\ \psi\in \Sigma,\ \psi\leq q\}.
\end{align*}
Moreover, if $x\in \Omega$ then $\psi_x=-\norm{q}_{L^\infty(\Omega)} \chi_{\Omega\setminus \{x\}}$ 
is a density one simple function fulfilling
$\psi_x(x)=0$ and $\psi_x(\xi)\leq q(\xi)$ for a.e. $\xi\in \Omega$, so that $\psi_x\leq q$. Hence,
\[
0\leq \sup\{ \psi(x):\ \psi\in \Sigma,\ \psi\leq q\} \quad \text{ for a.e. } x\in \Omega.
\]

It remains to show that
\begin{equation}\label{eq:sup_simple_geq}
\max\{q(x),0\}\geq \sup\{ \psi(x):\ \psi\in \Sigma,\ \psi\leq q\} \quad \text{ for a.e. }x\in \Omega.
\end{equation}
We argue as in the proof of \cite[Lemma~4.4]{harrach2019nonlocal-mono1}. It suffices to show that for each $\delta>0$ the set 
\begin{equation}\label{eq:sup_simple_M}
M:=\{ x\in \Omega:\ \max\{q(x),0\}+\delta< \sup\{ \psi(x):\ \psi\in \Sigma,\ \psi\leq q\} \}
\end{equation}
is a null set. To prove this, assume that $M$ is not a null set for some $\delta>0$. 
By removing a null set from $M$, we can assume that $M$ is a density one set.
By using Lusin's theorem (see \cite{royden1988real} for instance), all measurable function are approximately continuous at almost every point. Hence, $M$ must contain a point
$\widehat x$ in which the function $x\mapsto \max\{q(x),0\}$ is approximately continuous, and thus the set
\[
M':=\{ x\in \Omega: \max\{q(x),0\}\leq \max\{q(\widehat x),0\}+\delta / 3 \}
\]
has density one in $\widehat x$.  Removing a null set, we can assume that $M'$ is a density one set still containing $\widehat x$.

Moreover, by the definition of $M$, there must exist a $\psi\in \Sigma$ with $\psi\leq q$ and
\[
\max\{q(\widehat x),0\}+\frac{2}{3}\delta \leq \psi(\widehat x).
\]
This shows $\psi(\widehat x)>0$, so that, by \cite[Lemma~4.3]{harrach2019nonlocal-mono1},
there exists a density one set $M''$ containing $\widehat x$, where $\psi(x)=\psi(\widehat x)$ for all $x\in M''$.

We thus have that for all $x\in M'\cap M''$
\[
q(x)+\delta/3\leq \max\{q(x),0\}+\delta/3 \leq \max\{q(\widehat x),0\}+\frac{2}{3}\delta \leq \psi(\widehat x) = \psi(x),
\]
and $M'\cap M''$ possesses positive measure since $M'$ and $M''$ are density one sets that both contain $\widehat x$, cf., again, \cite[Lemma~4.3]{harrach2019nonlocal-mono1}.
But this contradicts that $\psi(x)\leq q(x)$ almost everywhere, and thus shows that $M$ defined in \eqref{eq:sup_simple_M} is a null set for all $\delta>0$. It follows that
\eqref{eq:sup_simple_geq} holds, so that the assertion is proven.
\end{proof}

\emph{Proof of Theorem \ref {thm:constructive}.}
Using lemma~\ref{lemma:sup_simple_functions} and the if-and-only-if monotonicity relation in Theorem \ref{Thm:if-and-only-if monotonicity},
we have that for all $q\in L^\infty(\Omega)$, and all $x\in \Omega$ a.e.,
\begin{align*}
q(x)&=\max\{q(x),0\}-\max\{ -q(x),0\}\\
&=\sup\{ \psi(x):\ \psi\in \Sigma,\ \psi\leq q \}
-\sup\{ \psi(x):\ \psi\in \Sigma,\ \psi\leq -q \}\\
&=\sup\{ \psi(x):\ \psi\in \Sigma,\ \psi\leq q \}
+\inf\{ \psi(x):\ \psi\in \Sigma,\ \psi\geq q \}\\
&=\sup\{ \psi(x):\ \psi\in \Sigma,\ \Lambda(\psi)\leq_\text{fin} \Lambda(q) \}
+\inf\{ \psi(x):\ \psi\in \Sigma,\ \Lambda(\psi)\geq_\text{fin} \Lambda(q) \}\\
&=\sup\{ \psi(x):\ \psi\in \Sigma,\ \Lambda(\psi)\leq_{d(\psi)} \Lambda(q) \}
+\inf\{ \psi(x):\ \psi\in \Sigma,\ \Lambda(\psi)\geq_{d(q)} \Lambda(q) \}.
\end{align*}
This completes the proof.
\endproof

\subsection{The linearized Calderón problem}

In this subsection, we will only consider $q\in L^\infty(\Omega)$ that fulfill the following assumption.

\begin{definition}
Let $N_q$ be the set defined by \eqref{N_q}, then we say that $q\in L^\infty(\Omega)$ is \emph{non-resonant}, if $N_q=\{0 \}$.
\end{definition}

This assumption is also called an \emph{eigenvalue condition} in the literature, since it is equivalent to $\{0\}$ being not an Dirichlet eigenvalue of the fractional operator $(-\Delta)^s +q$ in $\Omega$.
Note that it implies that $H_q(\Omega_{e})=H(\Omega_{e})$, and $H_q^s(\mathbb R^n)=H^s(\mathbb R^n)$,
i.e., that the Dirichlet problem is uniquely solvable for all Dirichlet data in $H(\Omega_{e})$, cf.\ Corollary \ref{cor:Dirichlet_bvp}.

We start by showing that the non-resonant potentials are an open subset of $L^\infty(\Omega)$, on
which the DtN operator is Fr\'echet differentiable.

\begin{lemma}\label{lemma:Frechet_derivative}
The set $\mathcal O=\{ q\in L^\infty(\Omega):\ N_q=\{ 0 \} \}$ is an open subset of $L^\infty(\Omega)$. On this set, the DtN operator
\[
\Lambda:\ \mathcal O\subseteq L^\infty(\Omega)\to \mathcal{L}(H(\Omega_{e}),H(\Omega_{e})^*),
\quad q\mapsto \Lambda(q),
\]
is Fr\'echet differentiable. For each $q\in \mathcal O$ its derivative is given by
\begin{align*}
\Lambda'(q):& \ L^\infty(\Omega) \to \mathcal{L}(H(\Omega_{e}),H(\Omega_{e})^*),\quad r\mapsto \Lambda'(q)r,\\
\left\langle (\Lambda'(q) r) F, G \right\rangle:&=\int_{\Omega} r S_q(F) S_q(G) \dx \quad \text{ for all } r\in L^\infty(\Omega),\ F,G\in H(\Omega_{e}),
\end{align*}
where $S_q:\ H(\Omega_{e})\to H^s(\mathbb R^n)$, $F\mapsto u$,
is the solution operator of the Dirichlet problem
\[
(-\Delta)^{s}u + q u=0  \mbox{ in }\Omega \quad\text{ and } \quad u|_{\Omega_{e}}=F.
\]
\end{lemma}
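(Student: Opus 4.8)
The plan is to first dispatch the openness of $\mathcal O$, which is essentially free: if $q\in\mathcal O$ then $\dim(N_q)=0$, and Theorem~\ref{thm:dimensions}(b) supplies an $\epsilon>0$ with $\dim(N_{q'})\le\dim(N_q)=0$ whenever $\norm{q'-q}_{L^\infty(\Omega)}\le\epsilon$, i.e.\ $q'\in\mathcal O$. Fix such a $q$ and $\epsilon$; then $q+r\in\mathcal O$ for $\norm{r}_{L^\infty(\Omega)}\le\epsilon$, and since $q$ and $q+r$ are non-resonant we have $H_q(\Omega_e)=H(\Omega_e)=H_{q+r}(\Omega_e)$, so $\Lambda(q+r)-\Lambda(q)$ is an honest bounded operator on $H(\Omega_e)$ and no domain bookkeeping is needed.

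Next I would turn the problem into an estimate on the difference of solution operators by means of Lemma~\ref{lemma:NtD_and_bilinear_forms}. Applying that lemma with $q_1=q+r$, $q_2=q$ gives $\Langle(\Lambda(q+r)-\Lambda(q))F,G\Rangle=\int_\Omega r\,S_{q+r}(F)\,S_q(G)\dx$ for all $F,G\in H(\Omega_e)$, while by definition $\Langle(\Lambda'(q)r)F,G\Rangle=\int_\Omega r\,S_q(F)\,S_q(G)\dx$. Subtracting,
\[
\Langle(\Lambda(q+r)-\Lambda(q)-\Lambda'(q)r)F,G\Rangle=\int_\Omega r\,\bigl(S_{q+r}(F)-S_q(F)\bigr)\,S_q(G)\dx,
\]
so by Cauchy--Schwarz and boundedness of $S_q:H(\Omega_e)\to H^s(\R^n)$ (Corollary~\ref{cor:Dirichlet_bvp}) it suffices to show $\norm{S_{q+r}(F)-S_q(F)}_{L^2(\Omega)}\le C\norm{r}_{L^\infty(\Omega)}\norm{F}_{H(\Omega_e)}$ with $C$ independent of $r$ for $\norm{r}_{L^\infty(\Omega)}\le\epsilon$.

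For that last estimate, write $u=S_{q+r}(F)$ and $w=S_q(F)$; both have Dirichlet trace $F$, so $u-w\in H_0^s(\Omega)$, and subtracting the two weak formulations yields $\mathscr{B}_q(u-w,\varphi)=-\int_\Omega r\,u\,\varphi\dx$ for all $\varphi\in H_0^s(\Omega)$, i.e.\ $(I-\iota^*\iota+\iota^*M_q\iota)(u-w)=-\iota^*M_r\iota\,u$ in $H_0^s(\Omega)$. Non-resonance means $I-\iota^*\iota+\iota^*M_q\iota$ is injective on $H_0^s(\Omega)$, hence boundedly invertible there since it is a compact perturbation of the identity; therefore $\norm{u-w}_{H^s(\R^n)}\le C\norm{\iota^*M_r\iota\,u}_{H_0^s(\Omega)}\le C'\norm{r}_{L^\infty(\Omega)}\norm{u}_{L^2(\Omega)}$. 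Finally $\norm{u}_{L^2(\Omega)}=\norm{S_{q+r}(F)}_{L^2(\Omega)}\le C''\norm{F}_{H(\Omega_e)}$ with $C''$ uniform in $r$, because $I-\iota^*\iota+\iota^*M_{q+r}\iota\to I-\iota^*\iota+\iota^*M_q\iota$ in operator norm (as $\norm{\iota^*M_r\iota}\le c\norm{r}_{L^\infty(\Omega)}$) and inverses stay uniformly bounded near an invertible operator. Putting everything together gives $\norm{\Lambda(q+r)-\Lambda(q)-\Lambda'(q)r}_{\mathcal L(H(\Omega_e),H(\Omega_e)^*)}=O(\norm{r}_{L^\infty(\Omega)}^2)=o(\norm{r}_{L^\infty(\Omega)})$, which is the asserted Fr\'echet differentiability; linearity of $r\mapsto\Lambda'(q)r$ is read off the formula, and its boundedness from $|\Langle(\Lambda'(q)r)F,G\Rangle|\le\norm{r}_{L^\infty(\Omega)}\norm{S_q(F)}_{L^2(\Omega)}\norm{S_q(G)}_{L^2(\Omega)}$.

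The main obstacle I anticipate is the uniform-in-$r$ control of the solution operators $S_{q+r}$ and, equivalently, the stability under the perturbation $\iota^*M_r\iota$ of the bounded invertibility of the Fredholm operator $I-\iota^*\iota+\iota^*M_q\iota$; this is where the non-resonance hypothesis is genuinely used. Once that is in hand, the rest is the bilinear-form identity of Lemma~\ref{lemma:NtD_and_bilinear_forms} together with Cauchy--Schwarz, and it in fact delivers the slightly stronger quadratic remainder bound $O(\norm{r}^2_{L^\infty(\Omega)})$.
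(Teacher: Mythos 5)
Your proof is correct and follows essentially the same route as the paper's: openness from Theorem~\ref{thm:dimensions}(b), the bilinear identity from Lemma~\ref{lemma:NtD_and_bilinear_forms}, and the uniform-in-$r$ estimate $\norm{S_{q+r}(F)-S_q(F)}_{H^s(\R^n)}\leq C\norm{r}_{L^\infty(\Omega)}\norm{F}_{H(\Omega_e)}$ via the operator formulation $(I-\iota^*\iota+\iota^*M_q\iota)$ from Lemma~\ref{lemma:solvability_Dirichlet_bvp}. You supply the perturbation-of-the-inverse details that the paper compresses into ``it is then easy to show,'' and you bound the operator norm of the remainder by Cauchy--Schwarz on the polarized form $\langle\cdot F,G\rangle$ rather than by invoking the symmetry to reduce to the diagonal $\langle\cdot F,F\rangle$ as the paper does; these are cosmetic differences and both yield the same quadratic remainder.
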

\begin{proof}
The fact that $\mathcal O$ is open immediately follows from Theorem \ref{thm:dimensions}(b).

Let $q\in \mathcal O\subseteq L^\infty_+(\Omega)$. $\Lambda'(q)$ is a linear bounded operator since $S_q$ is linear and bounded, cf.\ Corollary \ref{cor:Dirichlet_bvp}. For sufficiently small $r\in L^\infty(\Omega)$, we have that $q+r\in \mathcal O$, and it 
follows from Lemma~\ref{lemma:NtD_and_bilinear_forms} that 
\[
\left\langle \left( \Lambda(q+r) - \Lambda(q)\right) F, F \right\rangle= \int_\Omega r S_{q+r}(F)\, S_{q}(F)\dx.
\]
With the operator formulation from the proof of Lemma~\ref{lemma:solvability_Dirichlet_bvp}, it is then easy to show that, for sufficiently small $r\in L^\infty(\Omega)$, there exists a constant $C>0$ with
\[
\norm{S_{q+r}(F)-S_q(F)}_{H^s(\R^n)}\leq C \norm{r}_{L^\infty(\Omega)} \norm{F}_{H(\Omega_e)}.
\]

Using that $\Lambda(q)$, $\Lambda(q+r)$, and $\Lambda'(q)r$ are symmetric operators, it now follows that
\begin{align*}
\lefteqn{ \| \Lambda(q+r)-\Lambda(q)-\Lambda'(q)r\|_{\mathcal{L}(H(\Omega_{e}),H(\Omega_{e})^*)}}\\[+1ex] 
&= \sup_{\| F\|_{H(\Omega_{e})} =1 } 
\left| \left\langle \left(\Lambda(q+r)-\Lambda(q)-\Lambda'(q)r\right) F,F\right\rangle \right|\\
&= \sup_{\| F\|_{H(\Omega_{e})} =1 } 
\left| \int_\Omega r (S_{q+r}(F)-S_q(F)) S_{q}(F)\dx \right|
\leq  C\norm{r}_{L^\infty(\Omega)}^2 \norm{S_{q}}_{\mathcal{L}(H(\Omega_{e}),H^s(\R^n))}.
\end{align*}
which proves the assertion.
\end{proof}

Using the Fr\'echet derivative from Lemma~\ref{lemma:Frechet_derivative}, the monotonicity relations in Theorem \ref{Theorem for monotonicity} and \ref{thm:useful} can now be written as follows.
\begin{corollary}\label{remark:monotonicity_with_Frechet}
For all non-resonant $q_{1},q_{2}\in L^{\infty}(\Omega)$,
\begin{align*}
\Lambda'(q_2)(q_1-q_2) \geq_{d(q_1)}
\Lambda(q_1)-\Lambda(q_2)
\geq_{d(q_2)} \Lambda'(q_1)(q_1-q_2),
\end{align*}
and there exists $c>0$ so that for all measurable $D\subseteq \Omega$ containing $\supp(q_1-q_2)$
\[
c \Lambda'(q_2)\chi_D  \geq_{d(q_1)}  \Lambda'(q_1)\chi_D  \geq_{d(q_2)} \frac{1}{c} \Lambda'(q_2)\chi_D.
\]
\end{corollary}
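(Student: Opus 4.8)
The plan is to deduce Corollary~\ref{remark:monotonicity_with_Frechet} directly from the monotonicity relations in Theorem~\ref{Theorem for monotonicity} and its interchanged version (Remark~\ref{remark_monotonicity}), together with the explicit form of the Fr\'echet derivative in Lemma~\ref{lemma:Frechet_derivative}. Since $q_1,q_2$ are non-resonant we have $H_{q_j}(\Omega_e)=H(\Omega_e)$, $H_{q_1,q_2}(\Omega_e)=H(\Omega_e)$, and $S_{q_j}=S_{q_j}$ is defined on all of $H(\Omega_e)$, so all the operators below act on the same space $H(\Omega_e)$ and the generalized Loewner order $\leq_d$ from Definition~\ref{def:generalized_Loewner} applies without the domain caveats.

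First I would translate each side of the desired inequality into a quadratic form. By Lemma~\ref{lemma:Frechet_derivative}, for $F\in H(\Omega_e)$ one has $\langle (\Lambda'(q_2)(q_1-q_2))F,F\rangle=\int_\Omega (q_1-q_2)|u_2|^2\dx$ where $u_2=S_{q_2}(F)$, and likewise $\langle(\Lambda'(q_1)(q_1-q_2))F,F\rangle=\int_\Omega (q_1-q_2)|u_1|^2\dx$ with $u_1=S_{q_1}(F)$. By Lemma~\ref{lemma:NtD_and_bilinear_forms}, $\langle(\Lambda(q_1)-\Lambda(q_2))F,F\rangle=\int_\Omega(q_1-q_2)u_1u_2\dx$. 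Now the right inequality $\Lambda(q_1)-\Lambda(q_2)\geq_{d(q_2)}\Lambda'(q_1)(q_1-q_2)$ is exactly \eqref{eq:monotonicity} of Theorem~\ref{Theorem for monotonicity}: there is a subspace $V$ with $\dim V\le d(q_2)$ such that $\langle(\Lambda(q_1)-\Lambda(q_2))F,F\rangle\ge\int_\Omega(q_1-q_2)|u_1|^2\dx$ for all $F\in V^\perp$. The left inequality $\Lambda'(q_2)(q_1-q_2)\geq_{d(q_1)}\Lambda(q_1)-\Lambda(q_2)$ is the interchanged version from Remark~\ref{remark_monotonicity}: there is a subspace $V$ with $\dim V\le d(q_1)$ so that $\langle(\Lambda(q_1)-\Lambda(q_2))F,F\rangle\le\int_\Omega(q_1-q_2)|u_2|^2\dx$ for all $F\in V^\perp$. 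This establishes the first chain.

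For the second chain, I would apply the first chain with the pair $(q_1,q_1+\varepsilon\chi_D)$ or, more directly, invoke Theorem~\ref{thm:useful} and its interchanged version. From \eqref{eq:useful_monotonicity} and \eqref{eq:useful_bound} applied to $q_1,q_2$, and from the interchanged estimates in Remark~\ref{remark_monotonicity}, one gets (on suitable finite-codimension subspaces, with dimensions bounded by $d(q_1)$ and $d(q_2)$ since $\dim N_{q_j}=0$) two-sided comparisons $c_1\|u_1\|_{L^2(D)}\le\|u_2\|_{L^2(D)}\le c_2\|u_1\|_{L^2(D)}$ for every measurable $D\supseteq\supp(q_1-q_2)$. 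Combining this with the identities $\langle\Lambda'(q_j)\chi_D\,F,F\rangle=\int_D|u_j|^2\dx=\|u_j\|_{L^2(D)}^2$ from Lemma~\ref{lemma:Frechet_derivative} (note $q_1-q_2$ and hence $\chi_D$ are supported so that $\int_\Omega \chi_D|u_j|^2=\int_D|u_j|^2$) yields, on a subspace of finite codimension, $\frac{1}{c^2}\langle\Lambda'(q_2)\chi_D F,F\rangle\le\langle\Lambda'(q_1)\chi_D F,F\rangle\le c^2\langle\Lambda'(q_2)\chi_D F,F\rangle$; after renaming the constant this is the stated $c\,\Lambda'(q_2)\chi_D\geq_{d(q_1)}\Lambda'(q_1)\chi_D\geq_{\frac{1}{c}}\Lambda'(q_2)\chi_D$ with the claimed codimension bounds $d(q_1)$ and $d(q_2)$.

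The only genuine bookkeeping point — and the thing I would be most careful about — is tracking which excluded subspace belongs to which inequality and checking that its dimension is bounded by $d(q_1)$ or $d(q_2)$ rather than by $d(q_1)+d(q_2)$; this is where one must use the \emph{one-sided} versions of Theorem~\ref{Theorem for monotonicity}/Theorem~\ref{thm:useful} rather than the combined two-sided statements in Remark~\ref{remark_monotonicity}. Since the left inequality only needs the bound involving $|u_2|^2$ (codimension $\le d(q_1)$) and the right inequality only needs the bound involving $|u_1|^2$ (codimension $\le d(q_2)$), and similarly for the $\Lambda'(q_j)\chi_D$ comparison where each direction uses only one half of the two-sided $L^2(D)$ estimate, the sharper codimension bounds in the statement are exactly what the one-sided lemmas deliver. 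Everything else is a direct substitution of the quadratic-form identities from Lemmas~\ref{lemma:NtD_and_bilinear_forms} and \ref{lemma:Frechet_derivative}.
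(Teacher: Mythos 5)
Your proposal is correct and follows essentially the same route as the paper: both chains are obtained by recognizing the quadratic forms from Lemma~\ref{lemma:Frechet_derivative}, applying the one-sided monotonicity estimate of Theorem~\ref{Theorem for monotonicity} (for the first chain) and the $L^2(D)$ comparison \eqref{eq:useful_bound} from Theorem~\ref{thm:useful} (for the second), and then interchanging $q_1$ and $q_2$; the non-resonance assumption kills the $\dim N_{q_j}$ contribution so that the codimension bounds come out as $d(q_1)$ and $d(q_2)$. Your explicit observation that one must use the one-sided versions per inequality to avoid the weaker bound $d(q_1)+d(q_2)$ matches exactly what the paper does (one small typo aside, where you wrote $\geq_{\frac{1}{c}}$ instead of $\geq_{d(q_2)}\frac{1}{c}$).
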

\begin{proof}
Since $q_{1},q_{2}\in L^{\infty}(\Omega)$ are non-resonant, we have that $H_{q_1}(\Omega_e)=H_{q_2}(\Omega_e)=H_{q_1,q_2}(\Omega_e)=H(\Omega_e)$.
It then follows from Theorem \ref{Theorem for monotonicity} and Lemma~\ref{lemma:Frechet_derivative} that there exists a subspace $V\subseteq H(\Omega_e)$ with $\dim(V)\leq d(q_2)$
so that for all $F\in V^\perp$
\[
\Langle \left(\Lambda(q_1)-\Lambda(q_2)\right)F,F \Rangle \geq \int_\Omega (q_1-q_2) S_{q_1}(F)^2\dx
= \Langle \left( \Lambda'(q_1)(q_1-q_2) \right) F,F \Rangle, 
\]
which shows that $\Lambda(q_1)-\Lambda(q_2)\geq_{d(q_2)} \Lambda'(q_1)(q_1-q_2)$.

Also, it follows from Theorem \ref{thm:useful} and Lemma \ref{lemma:Frechet_derivative} that there exists 
a subspace $V_+\subseteq H(\Omega_e)$ with $\dim(V_+)\leq d(q_2)+N(q_2)=d(q_2)$ and a constant $\widetilde c>0$, so that
for all measurable $D\subseteq \Omega$ containing $\supp(q_1-q_2)$, and all $F\in V_+^\perp$,
\[
\Langle \left( \Lambda'(q_2)\chi_D \right) F,F \Rangle
=\norm{S_{q_2}(F)}_{L^2(D)}^2\leq \widetilde c^2 \norm{S_{q_1}(F)}_{L^2(D)}^2
=\widetilde c^2  \Langle \left( \Lambda'(q_1)\chi_D \right) F,F \Rangle,
\]
which shows $\Lambda'(q_2)\chi_D \leq_{d(q_2)} c \Lambda'(q_1)\chi_D$ with $c:=\widetilde c^2$. 

The other assertions follow by interchanging $q_1$ and $q_2$.
\end{proof}

We also have an if-and-only if monotonicity result for the linearized DtN-operators. 
\begin{theorem}\label{thm:converse_mon_frechet}
Let $n\in \N$, $\Omega\subset \R^n$ be a Lipschitz bounded open set and $s\in (0,1)$. Then for all non-resonant $q\in L^\infty(\Omega)$ and $r_1,r_2\in L^\infty(\Omega)$,
\[
r_1\leq r_2 \quad \text{ if and only if } \quad \Lambda'(q)r_1\leq \Lambda'(q)r_2
\quad \text{ if and only if } \quad \Lambda'(q)r_1\leq_\text{fin} \Lambda'(q)r_2.
\]
\end{theorem}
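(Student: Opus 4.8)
The plan is to mimic the proof of Theorem~\ref{Thm:if-and-only-if monotonicity}, but with the nonlinear monotonicity relations replaced by their linearized analogue from Lemma~\ref{lemma:Frechet_derivative}. Since $q$ is non-resonant we have $H_q(\Omega_e)=H(\Omega_e)$ and $S_q$ is defined on all of $H(\Omega_e)$, so every Loewner inequality below is between operators defined on the full space $H(\Omega_e)$. The starting point is that, by Lemma~\ref{lemma:Frechet_derivative} and the linearity of $r\mapsto \Lambda'(q)r$, for all $F\in H(\Omega_e)$
\[
\Langle \left( \Lambda'(q)r_2 - \Lambda'(q)r_1 \right) F, F \Rangle = \int_\Omega (r_2 - r_1)\, S_q(F)^2 \dx .
\]

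First I would dispatch the two easy implications. If $r_1\leq r_2$ a.e., then the right-hand side above is nonnegative for every $F$, so $\Lambda'(q)r_1\leq \Lambda'(q)r_2$ (in the ordinary Loewner order, i.e.\ with $d=0$); and $\Lambda'(q)r_1\leq \Lambda'(q)r_2$ trivially yields $\Lambda'(q)r_1\leq_\text{fin}\Lambda'(q)r_2$. It therefore remains to prove the converse implication $\Lambda'(q)r_1\leq_\text{fin}\Lambda'(q)r_2 \implies r_1\leq r_2$ a.e., which is where the localized potentials enter and which is the only genuinely nontrivial step.

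For the converse I would argue by contradiction. Assume $r_1\leq r_2$ fails, so there are $\delta>0$ and a measurable set $M\subseteq\Omega$ of positive measure with $r_1-r_2\geq\delta$ on $M$. By hypothesis there is a finite-dimensional subspace $V\subseteq H(\Omega_e)$ with
\[
0 \leq \Langle \left( \Lambda'(q)r_2 - \Lambda'(q)r_1 \right) F, F \Rangle = \int_\Omega (r_2 - r_1)\, S_q(F)^2 \dx \quad \text{ for all } F\in V^\perp .
\]
Apply Theorem~\ref{thm:localized} with the potential $q$, the set $M$, and the subspace $V$: this produces a sequence $(F^k)_{k\in\N}\subseteq V^\perp$ whose solutions $u^k=S_q(F^k)$ satisfy $\int_M|u^k|^2\dx\to\infty$ and $\int_{\Omega\setminus M}|u^k|^2\dx\to 0$. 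Inserting $F=F^k$ into the displayed inequality gives
\[
0 \leq \int_\Omega (r_2 - r_1)|u^k|^2 \dx \leq -\delta \int_M |u^k|^2 \dx + \norm{r_1 - r_2}_{L^\infty(\Omega)} \int_{\Omega\setminus M} |u^k|^2 \dx \longrightarrow -\infty,
\]
a contradiction; hence $r_1\leq r_2$ a.e. The only point requiring care is that the localizing Dirichlet data must be taken in $V^\perp$, which is exactly what the finite-codimension version of Theorem~\ref{thm:localized} guarantees; no new coercivity or resonance difficulty arises because $q$ is non-resonant, so this is the easiest of the if-and-only-if results in the paper once the localized-potentials machinery is available.
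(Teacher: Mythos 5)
Your proposal is correct and matches the paper's approach: the paper's proof of this theorem dispatches the two easy implications exactly as you do and then simply appeals to "the same localized potentials argument as in the proof of Theorem~\ref{Thm:if-and-only-if monotonicity}"; your write-up spells out that argument (using the exact identity $\Langle(\Lambda'(q)r_2-\Lambda'(q)r_1)F,F\Rangle=\int_\Omega(r_2-r_1)S_q(F)^2\dx$ together with Theorem~\ref{thm:localized} applied on $V^\perp$) correctly, and in fact more cleanly than in Theorem~\ref{Thm:if-and-only-if monotonicity} since here the relevant quadratic form is given by an exact energy integral rather than bounded by one.
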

\begin{proof}
If $r_1\leq r_2$ then $\Lambda'(q)r_1\leq \Lambda'(q)r_2$ follows immediately from the characterization 
of $\Lambda'(q)$ in Lemma \ref{lemma:Frechet_derivative}. (Note that this holds on the whole space $H(\Omega_e)$, and not just
on a subspace of finite codimension).

Clearly, $\Lambda'(q)r_1\leq \Lambda'(q)r_2$ implies
$\Lambda'(q)r_1\leq_\text{fin} \Lambda'(q)r_2$, and the implication from 
$\Lambda'(q)r_1\leq_\text{fin} \Lambda'(q)r_2$ to $r_1\leq r_2$ follows from the same localized potentials argument
as in the proof of Theorem \ref{Thm:if-and-only-if monotonicity}.
\end{proof}

This implies uniqueness of the linearized fractional Calder\'on problem:
\begin{corollary}\label{cor:Calderon_linearized}
Let $n\in \N$, $\Omega\subset \R^n$ be a Lipschitz bounded open set and $s\in (0,1)$. 
For all non-resonant $q\in L^\infty(\Omega)$, the Fr\'echet derivative $\Lambda'(q)$ is injective, i.e.
\begin{equation*}
\Lambda'(q) r\stackrel{\text{fin}}{=}0 \quad \text{ if and only if } \quad 
\Lambda'(q) r=0 \quad \text{ if and only if } \quad r=0.
\end{equation*}
\end{corollary}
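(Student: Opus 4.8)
The plan is to read off all three equivalences directly from the if-and-only-if monotonicity relation for linearized DtN operators in Theorem~\ref{thm:converse_mon_frechet}, by specializing it to the pairs $(r_1,r_2)=(0,r)$ and $(r_1,r_2)=(r,0)$ and using that $\Lambda'(q)$ is linear, so that $\Lambda'(q)0=0$.

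First I would dispatch the trivial implications. If $r=0$ then $\Lambda'(q)r=0$ is the zero operator, and the zero operator satisfies $\langle 0\,F,F\rangle=0\geq 0$ for all $F\in H(\Omega_e)$, so $0\leq_{d}\Lambda'(q)r$ and $\Lambda'(q)r\leq_{d}0$ already with $d=0$ (take $W=\{0\}$ in Definition~\ref{def:generalized_Loewner}); hence $\Lambda'(q)r\stackrel{\text{fin}}{=}0$. This gives $r=0\Rightarrow \Lambda'(q)r=0\Rightarrow \Lambda'(q)r\stackrel{\text{fin}}{=}0$.

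The substantive direction is $\Lambda'(q)r\stackrel{\text{fin}}{=}0\Rightarrow r=0$. By the definition of $\stackrel{\text{fin}}{=}$, the hypothesis $\Lambda'(q)r\stackrel{\text{fin}}{=}\Lambda'(q)0$ means simultaneously $\Lambda'(q)0\leq_{\text{fin}}\Lambda'(q)r$ and $\Lambda'(q)r\leq_{\text{fin}}\Lambda'(q)0$. Applying Theorem~\ref{thm:converse_mon_frechet} with $r_1=0$, $r_2=r$ yields $0\leq r$ a.e.\ in $\Omega$, and applying it once more with $r_1=r$, $r_2=0$ yields $r\leq 0$ a.e.\ in $\Omega$. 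Combining the two pointwise inequalities gives $r=0$ in $L^\infty(\Omega)$, which closes the chain of equivalences.

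Since Theorem~\ref{thm:converse_mon_frechet} is already available, there is essentially no obstacle: the proof is a one-line consequence. The only place meriting a word of care is the bookkeeping of the generalized Loewner order, namely checking that $\Lambda'(q)r\stackrel{\text{fin}}{=}0$ unpacks into the two $\leq_{\text{fin}}$ relations in exactly the orientation needed to feed into Theorem~\ref{thm:converse_mon_frechet}, and that the theorem's conclusion $r_1\leq r_2$ is the pointwise a.e.\ order on $L^\infty(\Omega)$, so that the two applications really combine to give $r=0$ almost everywhere.
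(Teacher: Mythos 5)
Your proof is correct and is exactly the argument the paper intends: the paper's own proof of Corollary~\ref{cor:Calderon_linearized} is a one-line appeal to Theorem~\ref{thm:converse_mon_frechet}, and your unpacking of $\Lambda'(q)r\stackrel{\text{fin}}{=}0$ into the two $\leq_{\text{fin}}$ relations with $(r_1,r_2)=(0,r)$ and $(r,0)$ is the intended instantiation.
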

\begin{proof}
This follows immediately from Theorem~\ref{thm:converse_mon_frechet}.
\end{proof}

\subsection{Inclusion detection by linearized monotonicity tests}\label{subsect:linearized_inclusion_detection}

In this section we will study the inclusion detection (or shape reconstruction) problem of determining regions where a non-resonant potential $q\in L^\infty(\Omega)$ changes from a known non-resonant reference potentials $q_0\in L^\infty(\Omega)$, i.e., we aim to reconstruct the support $q-q_0$ by comparing $\Lambda(q)$ with $\Lambda(q_0)$.
$q_0$ may describe a background coefficient, and $q$ denotes the coefficient function in the presence of anomalies or scatterers. 

We will generalize the results in \cite{harrach2019nonlocal-mono1} and show that the support of $q-q_0$ can be reconstructed with \emph{linearized monotonicity tests} \cite{harrach2013monotonicity,garde2017comparison}. These linearized tests only utilize the solution of the fractional Schr\"odinger equation
with the reference coefficient function $q_0\in L^\infty(\Omega)$. They do not require any other special solutions of the equation.

In all of the following let $n\in \N$, $\Omega\subset \R^n$ be a Lipschitz bounded open set, $s\in (0,1)$,
and let $q_0,q\in L^\infty(\Omega)$ be non-resonant.

For a measurable subset $M\subseteq \Omega$, we introduce the testing operator 
$\mathcal{T}_{M}:H(\Omega_{e})\to H(\Omega_{e})^{*}$ by 
setting $\mathcal{T}_{M}:=\Lambda'(q_0)\chi_M$. i.e.,
\begin{equation}
\label{TB operator}
\left\langle \mathcal{T}_{M}F,G\right\rangle :=\int_{M} S_{q_0}(F) S_{q_0}(G) dx
\quad \text{ for all } F,G\in H(\Omega_{e}),
\end{equation}
where $S_{q_0}:\ H(\Omega_{e})\to H^s(\mathbb R^n)$ denotes the solution operator
as in as in Lemma~\ref{lemma:Frechet_derivative}. 

The following theorem shows that we can find the support of $q-q_0$ by shrinking closed sets, cf.\ \cite{harrach2013monotonicity,garde2019regularized}.
\begin{theorem}\label{thm:support_from_closed_sets}
For each closed subset $C\subseteq \Omega$, 
\begin{align*}
\lefteqn{\mathrm{supp}(q-q_0)\subseteq C,}\\
& \quad \text{ if and only if } \quad  \exists \alpha>0:\ 
-\alpha \mathcal{T}_C \leq_{d(q_0)+d(q)} \Lambda(q)-\Lambda(q_0)\leq_{d(q)}  \alpha \mathcal{T}_C,\\
& \quad \text{ if and only if } \quad  \exists \alpha>0:\ 
-\alpha \mathcal{T}_C \leq_\text{fin} \Lambda(q)-\Lambda(q_0)\leq_\text{fin}  \alpha \mathcal{T}_C.
\end{align*}
Hence,
\begin{align*}
\lefteqn{\mathrm{supp}(q-q_0)}\\
&=\bigcap \{ C\subseteq \Omega \text{ closed}:\ \exists \alpha>0:\ -\alpha \mathcal{T}_C \leq_{d(q_0)+d(q)}  \Lambda(q)-\Lambda(q_0)\leq_{d(q)} \alpha \mathcal{T}_C\}\\
&=\bigcap \{ C\subseteq \Omega \text{ closed}:\ \exists \alpha>0:\ -\alpha \mathcal{T}_C \leq_\text{fin}  \Lambda(q)-\Lambda(q_0)\leq_\text{fin} \alpha \mathcal{T}_C\}.
\end{align*}
\end{theorem}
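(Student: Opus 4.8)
The plan is to run the three statements around a cycle and then read off the intersection formula. Since $\leq_{d}$ implies $\leq_{\text{fin}}$ and $\mathcal{T}_C=\Lambda'(q_0)\chi_C\geq 0$, the ``$\leq_{d(q_0)+d(q)}$/$\leq_{d(q)}$''-sandwich trivially implies the ``$\leq_{\text{fin}}$''-sandwich. So it suffices to prove (i) that $\mathrm{supp}(q-q_0)\subseteq C$ implies the sharp sandwich, and (ii) that the (a priori weaker) $\leq_{\text{fin}}$-sandwich already implies $\mathrm{supp}(q-q_0)\subseteq C$.

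For (i), assume $\mathrm{supp}(q-q_0)\subseteq C$ and set $\beta:=\norm{q-q_0}_{L^\infty(\Omega)}$. Then $q-q_0$ vanishes a.e.\ off $C$ and $|q-q_0|\leq\beta$ on $C$, so $-\beta\chi_C\leq q-q_0\leq\beta\chi_C$ a.e.\ in $\Omega$; by the characterization of the Fr\'echet derivative in Lemma~\ref{lemma:Frechet_derivative} (monotonicity of $r\mapsto\Lambda'(q_j)r$) this gives $-\beta\,\mathcal{T}_C\leq\Lambda'(q_0)(q-q_0)\leq\beta\,\mathcal{T}_C$ and $-\beta\,\Lambda'(q)\chi_C\leq\Lambda'(q)(q-q_0)\leq\beta\,\Lambda'(q)\chi_C$ on all of $H(\Omega_e)$. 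Corollary~\ref{remark:monotonicity_with_Frechet} then yields $\Lambda'(q_0)(q-q_0)\geq_{d(q)}\Lambda(q)-\Lambda(q_0)\geq_{d(q_0)}\Lambda'(q)(q-q_0)$ and, since $C\supseteq\mathrm{supp}(q-q_0)$, a constant $c>0$ with $\tfrac1c\,\mathcal{T}_C\leq_{d(q_0)}\Lambda'(q)\chi_C\leq_{d(q)}c\,\mathcal{T}_C$. Chaining these relations, using that the composition of $\leq_{d_1}$ and $\leq_{d_2}$ gives $\leq_{d_1+d_2}$ in the non-resonant setting (cf.\ the discussion after Definition~\ref{def:generalized_Loewner}), I obtain $\Lambda(q)-\Lambda(q_0)\leq_{d(q)}\Lambda'(q_0)(q-q_0)\leq\beta\,\mathcal{T}_C$ and $-\beta c\,\mathcal{T}_C\leq_{d(q)}-\beta\,\Lambda'(q)\chi_C\leq\Lambda'(q)(q-q_0)\leq_{d(q_0)}\Lambda(q)-\Lambda(q_0)$, i.e.\ the sandwich with $\alpha:=\beta\max\{1,c\}$.

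For (ii) I argue by contraposition. Assume $-\alpha\mathcal{T}_C\leq_{\text{fin}}\Lambda(q)-\Lambda(q_0)\leq_{\text{fin}}\alpha\mathcal{T}_C$ for some $\alpha>0$, but $\mathrm{supp}(q-q_0)\not\subseteq C$. Since $C$ is closed, $\{q\neq q_0\}$ has positive measure in the open set $\Omega\setminus C$ (otherwise $\Omega\setminus C$ would be disjoint from $\mathrm{supp}(q-q_0)$), so there are $\delta>0$ and a positive-measure set $M\subseteq\Omega\setminus C$ on which $q-q_0$ keeps a fixed sign with $|q-q_0|\geq\delta$. If $q_0-q\geq\delta$ on $M$, I combine the left inequality of the sandwich with the interchanged monotonicity estimate of Theorem~\ref{Theorem for monotonicity}/Remark~\ref{remark_monotonicity} to obtain a finite-codimensional subspace $W^\perp\subseteq H(\Omega_e)$ with $-\alpha\int_C|u_0|^2\dx\leq\langle(\Lambda(q)-\Lambda(q_0))F,F\rangle\leq\int_\Omega(q-q_0)|u_0|^2\dx$ for $F\in W^\perp$, where $u_0:=S_{q_0}(F)$. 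Applying the localized potentials of Theorem~\ref{thm:localized} to the potential $q_0$, the set $M$, and the finite-dimensional subspace $W$ produces $\{F^k\}\subseteq W^\perp$ with $\int_M|u_0^k|^2\dx\to\infty$ and $\int_{\Omega\setminus M}|u_0^k|^2\dx\to 0$; since $C\subseteq\Omega\setminus M$ and $q-q_0\leq-\delta$ on $M$, the right-hand side tends to $-\infty$ while the left-hand side tends to $0$, a contradiction.

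I expect the sign-indefinite case $q-q_0\geq\delta$ on $M$ to be the main obstacle. The difficulty is that the lower monotonicity bound $\langle(\Lambda(q)-\Lambda(q_0))F,F\rangle\geq\int_\Omega(q-q_0)|u_q|^2\dx$ from Theorem~\ref{Theorem for monotonicity} involves $u_q:=S_q(F)$ rather than the reference solution $u_0$, so one cannot work with $u_0$ alone; the plan here is to use the right inequality of the sandwich together with this bound and the solution-comparison estimate of Theorem~\ref{thm:useful}/Remark~\ref{remark_monotonicity} on a set containing $\mathrm{supp}(q-q_0)$, and to insert a sequence $\{F^k\}$ coming from the simultaneously localized potentials of Theorem~\ref{thm:locpot2}, chosen so that $u_q^k$ concentrates on $M$ while $u_0^k$ is simultaneously controlled on $C$, again forcing $\infty\leq 0$. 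Once (i) and (ii) are established, the displayed formula follows directly: applying the equivalence with $C:=\mathrm{supp}(q-q_0)$ (which is closed and contained in $\Omega$) shows that $\mathrm{supp}(q-q_0)$ belongs to the intersected family, so the intersection is contained in $\mathrm{supp}(q-q_0)$; conversely, every admissible $C$ contains $\mathrm{supp}(q-q_0)$ by (ii), so $\mathrm{supp}(q-q_0)$ is contained in the intersection. The two displayed intersections coincide because, by (i)--(ii), the $\leq_{d(q_0)+d(q)}/\leq_{d(q)}$ condition on $C$ and the $\leq_{\text{fin}}$ condition on $C$ are equivalent.
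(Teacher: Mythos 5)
Your part (i) and the closing intersection argument are correct and follow essentially the same route as the paper: chain the Fr\'echet-derivative monotonicity of Corollary~\ref{remark:monotonicity_with_Frechet} with the pointwise bounds $-\alpha c \chi_C \leq q-q_0 \leq \alpha \chi_C$, keeping track of the dimensions under composition of $\leq_d$. Your Case~1 of part (ii) (the branch where $q_0-q\geq\delta$ on $M\subseteq\Omega\setminus C$) is also a valid proof; it is a direct localized-potentials argument showing $q\geq q_0$ outside $C$, where the paper instead reads this off the first sandwich inequality via Theorem~\ref{thm:converse_mon_frechet}. Since that theorem is itself proved by the same localized-potentials contradiction, the two routes are equivalent.

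The genuine gap is Case~2 ($q-q_0\geq\delta$ on $M\subseteq\Omega\setminus C$), which you correctly flag as the hard part, but the plan you sketch does not close it. To produce a sequence $\{F^k\}$ with $\int_M|u_q^k|^2\dx\to\infty$ while $\int_C|u_0^k|^2\dx$ stays controlled via Theorem~\ref{thm:locpot2}, you would need $\mathrm{supp}(q-q_0)\subseteq M$, since this is the standing hypothesis of the simultaneously-localized-potentials theorem. In Case~2, however, $\mathrm{supp}(q-q_0)$ is exactly what you are trying to locate: it may (and typically does) intersect $C$ and spread far beyond $M$, so Theorem~\ref{thm:locpot2} applied to the pair $(q,q_0)$ with localization set $M$ is out of reach. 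The solution-comparison bound of Theorem~\ref{thm:useful} has the same restriction (it needs $D\supseteq\mathrm{supp}(q-q_0)$, which cannot be taken disjoint from $C$), so it also does not let you control $\int_C|u_0|^2\dx$ by $\int_C|u_q|^2\dx$.

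The paper's fix --- which your plan does not contain and which you would need to supply --- is the auxiliary-potential construction from \cite{harrach2019global}: set
\[
\widetilde q:= q+\delta\chi_M-\alpha\chi_C+(q_0-q)\chi_{\Omega\setminus(M\cup C)}, \qquad \widetilde q_0 := q_0+(q-\alpha-q_0)\chi_C,
\]
so that by construction $\mathrm{supp}(\widetilde q-\widetilde q_0)\subseteq M$ and $\mathrm{supp}(\widetilde q_0-q_0)\subseteq C$, while $q-q_0\geq \widetilde q - q$. One then applies the monotonicity relation a \emph{second} time, now to the pair $(\widetilde q, q)$, to convert the lower bound $\int_\Omega(q-q_0)|u_q|^2\dx$ (which involves the unknown $u_q$) into $\delta\int_M|u_{\widetilde q}|^2\dx-\alpha\int_{\Omega\setminus M}|u_{\widetilde q}|^2\dx$, and applies the solution-comparison estimate to $(\widetilde q_0, q_0)$ on $D=C$ to replace $\int_C|u_0|^2\dx$ by $c\int_C|u_{\widetilde q_0}|^2\dx$. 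Only after these two replacements is Theorem~\ref{thm:locpot2} applicable --- to $(\widetilde q,\widetilde q_0)$ on $M$ --- to drive the right-hand side to $+\infty$ and the left-hand side to $0$. Without this intermediate construction, Case~2 of part (ii) is incomplete.
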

\begin{proof}
\begin{enumerate}
\item[(a)] Let $\mathrm{supp}(q-q_0)\subseteq C$. Then, by Corollary \ref{remark:monotonicity_with_Frechet},
there exists a constant $c>0$ with
\[
\mathcal{T}_C=\Lambda'(q_0)\chi_C\geq_{d(q)} c \Lambda'(q)\chi_C.
\]
Moreover, $\mathrm{supp}(q-q_0)\subseteq C$ implies that for sufficiently large $\alpha>0$
\[
- \alpha c \chi_C \leq q-q_0 \leq \alpha \chi_C.
\]
Using Corollary \ref{remark:monotonicity_with_Frechet} and Theorem \ref{thm:converse_mon_frechet}, we thus obtain
\begin{align*}
\Lambda(q)&\leq_{d(q)} \Lambda(q_0)+\Lambda'(q_0)(q-q_0)\leq \Lambda(q_0)+\Lambda'(q_0)\alpha \chi_C 
= \Lambda(q_0) + \alpha \mathcal{T}_C,\\
\Lambda(q)&\geq_{d(q_0)} \Lambda(q_0)+\Lambda'(q)(q-q_0)\geq \Lambda(q_0)-\alpha c \Lambda'(q)\chi_C
\geq_{d(q)} \Lambda(q_0) - \alpha \mathcal{T}_C.
\end{align*}
\item[(b)] We will now show that
\begin{equation}\label{eq:proof_TC_alpha}
\exists \alpha>0:\ -\alpha \mathcal{T}_C \leq_\text{fin} \Lambda(q)-\Lambda(q_0)\leq_\text{fin}  \alpha \mathcal{T}_C
\end{equation}
implies $\mathrm{supp}(q-q_0)\subseteq C$. 

Let $\alpha>0$ fulfill \eqref{eq:proof_TC_alpha}. Then we obtain from the first inequality in \eqref{eq:proof_TC_alpha} with Corollary \ref{remark:monotonicity_with_Frechet} 
\[
\Lambda'(q_0)(-\alpha\chi_C)=-\alpha \mathcal{T}_C \leq_\text{fin} \Lambda(q)-\Lambda(q_0)
\leq_\text{fin} \Lambda'(q_0)(q-q_0),
\]
so that Theorem \ref{thm:converse_mon_frechet} yields that 
\begin{equation}\label{eq:proof_TC_alphachileqqq0}
-\alpha\chi_C\leq q-q_0.
\end{equation}

It remains to show that the second inequality in \eqref{eq:proof_TC_alpha} implies that
\begin{equation}\label{eq:proof_TC_complicated_part}
q-q_0\leq 0 \quad \text{ on $\Omega\setminus C$.}
\end{equation}
We argue by contradiction and assume that \eqref{eq:proof_TC_complicated_part} is not true. Then there exists $\delta>0$, and a measurable subset $M\subseteq \Omega\setminus C$ with positive measure so that $q-q_0\geq \delta$ on $M$. 

We now use an idea from \cite{harrach2019global} to rewrite energy terms by repeated application of the monotonicity relation, and define
\[
\widetilde q:=q+\delta \chi_M - \alpha \chi_C + (q_0-q)\chi_{\Omega\setminus(M\cup C)}=\left\{\begin{array}{c l} q+\delta & \text{ in $M$,}\\
q-\alpha   & \text{ in $C$,}\\
q_0  & \text{ in $\Omega\setminus(M\cup C)$,}\\
\end{array}\right.
\]
and note that 
\[
q-q_0\geq \delta\chi_M - \alpha \chi_C\geq \delta\chi_M - \alpha \chi_C+(q_0-q)\chi_{\Omega\setminus (C\cup M)}=\widetilde q-q.
\]

Using Theorem~\ref{Theorem for monotonicity} and Remark~\ref{remark_monotonicity}, there exists a finite dimensional subspace $V\subseteq H_{\widetilde q}(\Omega_e)$
so that for all $F\in V^\perp\subseteq H_{\widetilde q}(\Omega_e)$
\begin{align}\nonumber
\Langle \left(\Lambda(q)-\Lambda(q_0)\right) F, F \Rangle 
&\geq \int_\Omega (q-q_0)|u_q|^2 \dx\geq \int_\Omega (\widetilde q-q)|u_q|^2 \dx\\
\label{eq:proof_TC_long_inequality}
&\geq \Langle \left(\Lambda(\widetilde q)-\Lambda(q)\right) F, F \Rangle
\geq \int_\Omega (\widetilde q-q)|u_{\widetilde q}|^2 \dx\\
\nonumber &\geq \delta \int_{M} |u_{\widetilde q}|^2 \dx - \alpha \int_{\Omega\setminus M} |u_{\widetilde q}|^2\dx,
\end{align}
where $u_q=S_q(F)$, $u_{\widetilde q}=S_{\widetilde q}(F)$, and, for the last inequality, we assumed without loss of generality that $\alpha>0$ is larger than $\norm{q-q_0}_{L^\infty(\Omega)}$. For the last argument, note
that the inequalities in \eqref{eq:proof_TC_long_inequality} each hold on possibly different subspaces of finite codimension
in $H_{\widetilde q}(\Omega_e)$, so that $V$ is obtained by taking the orthogonal complement of the intersection of all these spaces.

We also define 
\[
\widetilde q_0:=\left\{\begin{array}{c l} q-\alpha   & \text{ in $C$,}\\
q_0  & \text{ in $\Omega\setminus C$.}
\end{array}\right.
\]
Since $\supp(\widetilde q_0-q_0)\subseteq C$, we can apply Theorem~\ref{thm:useful} 
to obtain a finite dimensional subspace $V'\subseteq H_{\widetilde q_0,q_0}(\Omega_e)= H_{\widetilde q_0}(\Omega_e)$ (note that $q_0$ is non-resonant), and a constant $c>0$,
so that for all $F\in V'^\perp\subseteq H_{\widetilde q_0}(\Omega_e)$
\[
\langle \mathcal{T}_C F,F\rangle=
\int_{C} |u_{q_0}|^2\dx \leq c \int_{C} |u_{\widetilde q_0}|^2\dx,
\]
where $u_{q_0}=S_{q_0}(F)$, $u_{\widetilde q_0}=S_{\widetilde q_0}(F)$.
Hence, the second inequality in \eqref{eq:proof_TC_alpha} implies that 
\begin{equation}\label{eq:linproof_contradiction}
c\int_{C} |u_{\widetilde q_0}|^2\dx\geq \delta \int_M |u_{\widetilde q}|^2 \dx - \alpha \int_{\Omega\setminus M} |u_{\widetilde q}|^2 \dx
\end{equation}
for all $F\in W^\perp\subseteq H_{\widetilde q_0,\widetilde q}(\Omega_e)$, where $W\subseteq H_{\widetilde q_0,\widetilde q}(\Omega_e)$
is a finite dimensional subspace.
But $\supp(\widetilde q-\widetilde q_0)\subseteq M$, so that the result on simultaneously localized potentials in Theorem \ref{thm:locpot2}
(with Theorem \ref{thm:locpot2} applied to the herein constructed subspace $W$)
yields the existence of a sequence $\{F^k\}_{k\in \N}\subseteq W^\perp\subseteq H_{\widetilde q_0,\widetilde q}(\Omega_e)$,
so that the corresponding solutions $u_{\widetilde q_0}^k=S_{\widetilde q_0}(F^k)$, $u_{\widetilde q}^k=S_{\widetilde q}(F^k)$,
fulfill
\begin{align*}
\int_{M} |u_{\widetilde q}^k|^2 \dx \to \infty, \quad  \int_{\Omega\setminus M} |u_{\widetilde q_0}^k|^2 \dx \to 0, 
 \quad \text{ and } \quad \int_{\Omega\setminus M} |u_{\widetilde q}^k|^2 \dx \to 0,
\end{align*}
which contradicts \eqref{eq:linproof_contradiction} since $C\subseteq \Omega\setminus M$.
Hence, \eqref{eq:proof_TC_complicated_part} and thus the assertion is proven.
\end{enumerate}
\end{proof}

We also extend the simpler results for the definite case, where either $q\geq q_0$ or $q\leq q_0$
holds almost everywhere in $\Omega$, from \cite{harrach2019nonlocal-mono1} to general (but non-resonant) $L^\infty(\Omega)$-potentials. 
We will show that it suffices to test open balls to reconstruct 
the inner support (for $q\geq q_0$), resp., a set between the support of $q-q_0$
and its inner support (for $q\leq q_0$), where, as in \cite[Section 2.2]{harrach2013monotonicity}, the inner support $\mathrm{inn\,supp}(r)$ of a measurable function $r:\ \Omega\to \mathbb{R}$ is defined as the union of all open sets $U$ on which the essential infimum of $|\kappa|$ is positive.

\begin{theorem}\label{thm:support_from_open_balls}
\begin{enumerate}
\item[(a)] Let $q\leq q_0$. For every open set $B\subseteq \Omega$ and every $\alpha>0$

\begin{alignat}{3}
\label{eq:supp_open_balls_a1} 
q&\leq q_0-\alpha \chi_B\quad && \text{ implies } &\quad \Lambda(q) &\leq_{d(q)} \Lambda(q_0)-\alpha\mathcal{T}_{B},\\
\label{eq:supp_open_balls_a2}
\Lambda(q) &\leq_\text{fin}  \Lambda(q_0)-\alpha\mathcal{T}_{B} \quad && \text{ implies } & \quad B&\subseteq \supp(q-q_0).
\end{alignat}

Hence,
\begin{align*}
\lefteqn{\mathrm{inn\,supp}(q-q_0)}\\
&\subseteq \bigcup \{B\subseteq \Omega \text{ open ball}:\ \exists \alpha>0: \Lambda(q) \leq_{d(q)} \Lambda(q_0)-\alpha\mathcal{T}_{B}\}\\
&\subseteq \bigcup \{B\subseteq \Omega \text{ open ball}:\ \exists \alpha>0: \Lambda(q) \leq_\text{fin} \Lambda(q_0)-\alpha\mathcal{T}_{B}\}\\
&\subseteq \supp(q-q_0).
\end{align*}
\item[(b)] Let $q\geq q_0$. For every open set $B\subseteq \Omega$ and every $\alpha>0$

\begin{alignat}{3}
\label{eq:supp_open_balls_b1} 
q&\geq q_0+\alpha \chi_B \quad && \text{ implies } &\quad 
\exists \widetilde\alpha>0:\ \Lambda(q) &\geq_\text{fin} \Lambda(q_0)+\widetilde\alpha\mathcal{T}_{B},\\
\label{eq:supp_open_balls_b2}
\Lambda(q) &\geq_\text{fin} \Lambda(q_0)+\alpha\mathcal{T}_{B} \quad && \text{ implies } & \quad 
q&\geq q_0+\alpha \chi_B.
\end{alignat}
Hence,
\begin{align*}
\mathrm{inn\,supp}(q-q_0)=\bigcup \{B\subseteq \Omega \text{ open ball}:\ \exists \alpha>0: \Lambda(q) \geq_\text{fin} \Lambda(q_0)+\alpha\mathcal{T}_{B}\}.
\end{align*}
\end{enumerate}
\end{theorem}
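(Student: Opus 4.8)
The plan is to establish the four implications \eqref{eq:supp_open_balls_a1}--\eqref{eq:supp_open_balls_b2} separately and then read off the set characterizations of $\mathrm{inn\,supp}(q-q_0)$ by the usual shrinking/covering argument as in Theorem~\ref{thm:support_from_closed_sets}. The implications \eqref{eq:supp_open_balls_a1} and \eqref{eq:supp_open_balls_b2} are purely algebraic consequences of the Fr\'echet-derivative monotonicity: for \eqref{eq:supp_open_balls_a1}, the hypothesis $q-q_0\leq-\alpha\chi_B$ gives $\Lambda'(q_0)(q-q_0)\leq-\alpha\Lambda'(q_0)\chi_B=-\alpha\mathcal T_B$ by Theorem~\ref{thm:converse_mon_frechet}, while $\Lambda(q)-\Lambda(q_0)\leq_{d(q)}\Lambda'(q_0)(q-q_0)$ by Corollary~\ref{remark:monotonicity_with_Frechet}, and chaining yields $\Lambda(q)\leq_{d(q)}\Lambda(q_0)-\alpha\mathcal T_B$; for \eqref{eq:supp_open_balls_b2} one runs the same chain backwards, $\alpha\mathcal T_B=\Lambda'(q_0)(\alpha\chi_B)\leq_\text{fin}\Lambda(q)-\Lambda(q_0)\leq_{d(q)}\Lambda'(q_0)(q-q_0)$, so that $\Lambda'(q_0)(\alpha\chi_B)\leq_\text{fin}\Lambda'(q_0)(q-q_0)$ and Theorem~\ref{thm:converse_mon_frechet} forces $\alpha\chi_B\leq q-q_0$.

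For \eqref{eq:supp_open_balls_b1} I would invoke the refined monotonicity estimate \eqref{eq:useful_monotonicity} of Theorem~\ref{thm:useful} with $q_1=q$, $q_2=q_0$: on a subspace of finite codimension, $\langle(\Lambda(q)-\Lambda(q_0))F,F\rangle\geq\int_\Omega(q-q_0)|u_q|^2\dx+\lambda\norm{u_q-u_{q_0}}_{H^s(\R^n)}^2$, where $u_q=S_q(F)$ and $u_{q_0}=S_{q_0}(F)$. Since $q-q_0\geq\alpha\chi_B\geq 0$ and $H^s(\R^n)\hookrightarrow L^2(\Omega)$ continuously, the right-hand side is bounded below by $\alpha\norm{u_q}_{L^2(B)}^2+\lambda'\norm{u_q-u_{q_0}}_{L^2(B)}^2$, and a triangle inequality together with Young's inequality bounds this further below by $\widetilde\alpha\norm{u_{q_0}}_{L^2(B)}^2=\widetilde\alpha\langle\mathcal T_BF,F\rangle$ for a suitable $\widetilde\alpha>0$; hence $\Lambda(q)\geq_\text{fin}\Lambda(q_0)+\widetilde\alpha\mathcal T_B$. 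The decisive point here is that the extra term $\lambda\norm{u_q-u_{q_0}}^2$ in \eqref{eq:useful_monotonicity} is exactly what lets one replace the solution $u_q$ occurring in the monotonicity estimate by the reference solution $u_{q_0}$ that defines $\mathcal T_B$.

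The main work is in \eqref{eq:supp_open_balls_a2}, which I would prove by contradiction. If $B\not\subseteq\supp(q-q_0)$, choose an open ball $B_0\subseteq B$ with $q=q_0$ a.e.\ in $B_0$, so that $\supp(q-q_0)\subseteq\Omega\setminus B_0$. Collect into one finite-dimensional subspace $W\subseteq H(\Omega_e)$ the exceptional subspaces coming from (i) the hypothesis $\Lambda(q)\leq_\text{fin}\Lambda(q_0)-\alpha\mathcal T_B$, (ii) the monotonicity estimate $\langle(\Lambda(q)-\Lambda(q_0))F,F\rangle\geq\int_\Omega(q-q_0)|u_q|^2\dx$ of Theorem~\ref{Theorem for monotonicity}, and (iii) the comparison bound $\norm{u_q}_{L^2(\Omega\setminus B_0)}\leq c\norm{u_{q_0}}_{L^2(\Omega\setminus B_0)}$ from \eqref{eq:useful_bound} of Theorem~\ref{thm:useful} applied with $q_1=q_0$, $q_2=q$ and $D=\Omega\setminus B_0\supseteq\supp(q-q_0)$. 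Applying the localized potentials of Theorem~\ref{thm:localized} to the reference potential $q_0$ with the set $B_0$ and the finite-dimensional subspace $W$ yields $F^k\in W^\perp$ with $\int_{B_0}|u_{q_0}^k|^2\dx\to\infty$ and $\int_{\Omega\setminus B_0}|u_{q_0}^k|^2\dx\to0$. On the one hand $\langle(\Lambda(q)-\Lambda(q_0))F^k,F^k\rangle\leq-\alpha\int_B|u_{q_0}^k|^2\dx\leq-\alpha\int_{B_0}|u_{q_0}^k|^2\dx\to-\infty$; on the other hand, since $q-q_0$ vanishes a.e.\ outside $\Omega\setminus B_0$, the estimates (ii)--(iii) give $\langle(\Lambda(q)-\Lambda(q_0))F^k,F^k\rangle\geq-\norm{q-q_0}_{L^\infty(\Omega)}\,c^2\int_{\Omega\setminus B_0}|u_{q_0}^k|^2\dx\to0$, a contradiction. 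I expect this step to be the crux: one must realise that \emph{ordinary} localized potentials for $q_0$ combined with the $L^2$-comparison bound of Theorem~\ref{thm:useful} suffice --- the simultaneously localized potentials of Theorem~\ref{thm:locpot2} are \emph{not} available here, since the ball $B_0$ on which one wants to concentrate does not contain $\supp(q-q_0)$ --- and one must carefully bookkeep the finitely many exceptional subspaces so that a single sequence $F^k$ meets all the required estimates.

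Finally, for the ``Hence'' statements: the inclusions of the form $\bigcup\{\dots\}\subseteq\supp(q-q_0)$ (resp.\ $\subseteq\mathrm{inn\,supp}(q-q_0)$) follow immediately from \eqref{eq:supp_open_balls_a2} (resp.\ \eqref{eq:supp_open_balls_b2}); the middle inclusions follow from $\leq_d\Rightarrow\leq_\text{fin}$; and the inclusions $\mathrm{inn\,supp}(q-q_0)\subseteq\bigcup\{\dots\}$ follow from \eqref{eq:supp_open_balls_a1} (resp.\ \eqref{eq:supp_open_balls_b1}): given $x\in\mathrm{inn\,supp}(q-q_0)$, definiteness of $q-q_0$ yields an open ball $B\ni x$ on which $q_0-q\geq\beta>0$ (resp.\ $q-q_0\geq\beta>0$) a.e., i.e.\ $q\leq q_0-\beta\chi_B$ (resp.\ $q\geq q_0+\beta\chi_B$), so that $B$ passes the corresponding monotonicity test.
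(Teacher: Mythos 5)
Your argument is correct but takes a genuinely different route from the paper on two of the four implications. For \eqref{eq:supp_open_balls_a2}, the paper never leaves the linearised level: it chains $\alpha\mathcal{T}_B\leq_\text{fin}\Lambda(q_0)-\Lambda(q)\leq_\text{fin}\Lambda'(q)(q_0-q)\leq\norm{q_0-q}_{L^\infty(\Omega)}\Lambda'(q)\chi_{\supp(q-q_0)}\leq_\text{fin}c\norm{q_0-q}_{L^\infty(\Omega)}\Lambda'(q_0)\chi_{\supp(q-q_0)}$ and then applies Theorem~\ref{thm:converse_mon_frechet} to the resulting $\Lambda'(q_0)$-comparison to read off $\alpha\chi_B\leq c\norm{q_0-q}_{L^\infty(\Omega)}\chi_{\supp(q-q_0)}$ pointwise, with no fresh localized-potentials argument. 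Your contradiction argument in effect inlines the localized-potentials proof of Theorem~\ref{thm:converse_mon_frechet} together with the $L^2$-comparison hidden inside Corollary~\ref{remark:monotonicity_with_Frechet}; your observation that \eqref{eq:useful_bound} on $D=\Omega\setminus B_0$ is exactly the substitute for the unavailable Theorem~\ref{thm:locpot2} is the right mechanism and is indeed the content of that corollary, but you trade the modularity for an explicit bookkeeping of the exceptional finite-dimensional subspaces. For \eqref{eq:supp_open_balls_b1}, the paper introduces the intermediate potential $q_0+\alpha\chi_B$, applying Theorem~\ref{Theorem for monotonicity} to the pair $(q,q_0+\alpha\chi_B)$ and then the $L^2$-comparison \eqref{eq:useful_bound} to $(q_0+\alpha\chi_B,q_0)$; you instead apply Theorem~\ref{thm:useful} directly to $(q,q_0)$ and recover $\langle\mathcal{T}_BF,F\rangle=\norm{u_{q_0}}_{L^2(B)}^2$ from $\norm{u_q}_{L^2(B)}^2$ and the gap term $\lambda\norm{u_q-u_{q_0}}_{H^s(\R^n)}^2$ via a triangle/Young inequality. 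This avoids the auxiliary potential at the cost of a slightly worse constant $\widetilde\alpha$, and it highlights more clearly why the $H^s$-gap term in \eqref{eq:useful_monotonicity} is indispensable here. Your proofs of \eqref{eq:supp_open_balls_a1} and \eqref{eq:supp_open_balls_b2}, and the final ``Hence'' covering/shrinking step, coincide with the paper's.
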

\begin{proof}
\begin{enumerate}
\item[(a)] If $q_1\leq q_0-\alpha \chi_B$, then we obtain using Theorem \ref{thm:converse_mon_frechet}, and Corollary \ref{remark:monotonicity_with_Frechet} that
\begin{align*}
\Lambda(q)-\Lambda(q_0)&\leq_{d(q)} \Lambda'(q_0)(q-q_0)\leq -\alpha \Lambda'(q_0)\chi_B=-\alpha \mathcal{T}_{B},
\end{align*}
so that \eqref{eq:supp_open_balls_a1} is proven. On the other hand, if $\Lambda(q) \leq_\text{fin} \Lambda(q_0)-\alpha\mathcal{T}_{B}$ then we obtain from Theorem \ref{thm:converse_mon_frechet}, and Corollary \ref{remark:monotonicity_with_Frechet}, that there exists $c>0$ with
\begin{align*}
\alpha\Lambda'(q_0)\chi_B&=
\alpha \mathcal{T}_{B}\leq_\text{fin} \Lambda(q_0)-\Lambda(q)
\leq_\text{fin} \Lambda'(q)(q_0-q)\\
&\leq \norm{q_0-q}_{L^\infty(\Omega)}\Lambda'(q)\chi_{\supp(q-q_0)}\\
&\leq_\text{fin} c \norm{q_0-q}_{L^\infty(\Omega)}\Lambda'(q_0)\chi_{\supp(q-q_0)},
\end{align*}
and that this implies 
\[
\alpha \chi_B\leq c \norm{q_0-q}_{L^\infty(\Omega)}\chi_{\supp(q-q_0)},
\]
so that \eqref{eq:supp_open_balls_a2} is proven.
\item[(b)] 
Let $q\geq q_0+\alpha \chi_B$. By Theorem~\ref{Theorem for monotonicity}, there exists a subspace
$V\subseteq H_{q_0+\alpha \chi_B}(\Omega_e)$ with $\dim(V)\leq d(q_0+\alpha \chi_B)$
so that 
\[
\langle \Lambda(q)F,F\rangle \geq \langle \Lambda(q_0+\alpha \chi_B) F, F\rangle \quad \text{ for all } 
F\in V^\perp\subseteq H_{q_0+\alpha \chi_B}(\Omega_e).
\]
Moreover, by Theorem~\ref{thm:useful} there also exists a subspace $V'\subseteq H_{q_0+\alpha \chi_B}(\Omega_e)$ with $\dim(V')\leq d(q_0)$ and a constant $c>0$ so that
\begin{align*}
\langle \left(\Lambda(q_0+\alpha \chi_B)-\Lambda(q_0)\right)F,F\rangle
&\geq \alpha \int_{B} |u_{q_0+\alpha\chi_B}|^2\dx\\
&\geq \alpha c \int_{B} |u_{q_0}|^2\dx=c\alpha \langle \mathcal{T}_{B} F,F\rangle
\end{align*}
for all $F\in V'^\perp\subseteq H_{q_0+\alpha \chi_B}$, where $u_{q_0+\alpha\chi_B}=S_{q_0+\alpha\chi_B}(F)$, and 
$u_{q_0}=S_{q_0}(F)$. Hence
\[
\langle ( \Lambda(q)-\Lambda(q_0) )F,F\rangle\geq c\alpha \mathcal{T}_{B},
\]
holds for all $F\in (V+V')^\perp\subseteq H_{q_0+\alpha \chi_B}(\Omega_e)$, which is a subspace of codimension 
$\dim(N_{q_0+\alpha\chi_B})$ in $H(\Omega_e)$. Hence,
\[
 \Lambda(q) \geq_d \Lambda(q_0)  + c\alpha \mathcal{T}_{B}\quad \text{ with } d= d(q)+d(q_0+\alpha \chi_B)+\dim(N_{q_0+\alpha\chi_B}),
\]
which shows \eqref{eq:supp_open_balls_b1}.
On the other hand, $\Lambda(q) \geq_\text{fin} \Lambda(q_0)+\alpha\mathcal{T}_{B}$ implies by Corollary \ref{remark:monotonicity_with_Frechet}
\begin{align*}
\alpha \Lambda'(q_0)\chi_B=\alpha\mathcal{T}_{B}\leq_\text{fin} \Lambda(q) - \Lambda(q_0) 
\leq_\text{fin} \Lambda'(q_0)(q-q_0),
\end{align*}
so that it follows from Theorem \ref{thm:converse_mon_frechet} that
\[
\alpha \chi_B \leq q-q_0,
\]
which proves \eqref{eq:supp_open_balls_b1}.
\end{enumerate}
\end{proof}

\section{Uniqueness and Lipschitz stability for the fractional Calder\'on problem with finitely many measurements} \label{Section 5}

In this section let $\QQ\subseteq L^\infty(\Omega)$ be a finite dimensional subspace and, with a fixed constant $a>0$, let 
\[
\QQ_{[-a,a]}:=\{ q\in \QQ:\ \norm{q}_{L^\infty(\Omega)}\leq a\}.
\] 
We will show that a sufficiently high number
of measurements of the DtN operator uniquely determines a potential in $\QQ_{[-a,a]}$ and prove a Lipschitz stability result.

To formulate our result, we denote the orthogonal projection operators from $H(\Omega_e)$ to a subspace $H$ by $P_H$,
i.e. $P_H$ is the linear operator with
\[
P_H:\ H(\Omega_e)\to H, \quad P_H F:=\left\{ \begin{array}{l l} F & \text{ if $F\in H$,}\\ 0 & \text{ if $F\in H^\perp\subseteq H(\Omega_e).$}
\end{array}\right.
\]
$P_H':\ H^*\to H(\Omega_e)^*$ denotes the dual operator of $P_H$. For possibly resonant potentials $q_1,q_2\in L^\infty(\Omega)$, the subspace $H$ might contain non-admissible Dirichlet boundary values, so we also require the orthogonal projection $P_{q_1q_2}:=P_{H_{q_1,q_2}(\Omega_e)}$.

\begin{theorem}\label{thm:stability}
For each sequence of subspaces 
\[
H_1\subseteq H_2\subseteq H_3\subseteq ...\subseteq H(\Omega_e), \quad \text{ with } \quad
\overline{\bigcup_{l\in \N} H_l}=H(\Omega_e),
\]
there exists $k\in \N$, and $c>0$, so that
\begin{align}\label{Lipschitz stability estimate}
\left\| P_{H_l}' P_{q_1q_2}' \left( \Lambda(q_2)-\Lambda(q_1) \right) P_{q_1q_2} P_{H_l}\right\|_{\LL(H(\Omega_e),H(\Omega_e)^*)}
\geq \frac{1}{c} \norm{q_2-q_1}_{L^\infty(\Omega)}
\end{align}
for all $q_1,q_2\in \QQ_{[-a,a]}$ and all $l\geq k$.
\end{theorem}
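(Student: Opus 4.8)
The plan is to prove \eqref{Lipschitz stability estimate} by a compactness--contradiction argument, exploiting that $\QQ_{[-a,a]}$ is a bounded subset of the finite dimensional space $\QQ$, that all the excluded subspaces occurring in Section~\ref{Section 3} have uniformly bounded dimension, and that the degenerate limit can be ruled out by the if-and-only-if monotonicity relation of Theorem~\ref{Thm:if-and-only-if monotonicity} together with the localized potentials of Theorem~\ref{thm:localized}. As a preliminary step I would record that $d(q)$ and $\dim(N_q)$ are bounded by a common constant $D\in\N$ for all $q\in\QQ_{[-a,a]}$: since $\norm{q}_{L^\infty(\Omega)}\le a$ implies $\iota^*\iota-\iota^*M_q\iota=\iota^*(I-M_q)\iota\le(1+a)\iota^*\iota$ in the Loewner order on $H_0^s(\Omega)$, the Courant--Fischer--Weyl min-max principle (exactly as in the proof of Theorem~\ref{thm:dimensions}) bounds the $k$-th eigenvalue of $\iota^*\iota-\iota^*M_q\iota$ by $(1+a)$ times the $k$-th eigenvalue of the fixed compact operator $\iota^*\iota$, so at most $D$ of these eigenvalues, uniformly in $q\in\QQ_{[-a,a]}$, are $\ge1$, and hence $d(q),\dim(N_q)\le D$. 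In particular $\mathrm{codim}(H_{q_1,q_2}(\Omega_e))\le 2D$ for all $q_1,q_2\in\QQ_{[-a,a]}$.

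Write $\Phi_l(q_1,q_2)$ for the left hand side of \eqref{Lipschitz stability estimate}. Since $\Lambda(q_1)-\Lambda(q_2)$ is symmetric, $\Phi_l(q_1,q_2)=\sup\{\,|\langle(\Lambda(q_2)-\Lambda(q_1))P_{q_1q_2}F,P_{q_1q_2}F\rangle| : F\in H_l,\ \norm{F}_{H(\Omega_e)}=1\,\}$, so $l\mapsto\Phi_l(q_1,q_2)$ is non-decreasing, and since $\overline{\bigcup_l H_l}=H(\Omega_e)$ it converges to $\norm{P_{q_1q_2}'(\Lambda(q_2)-\Lambda(q_1))P_{q_1q_2}}$ by continuity of the quadratic form. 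Suppose \eqref{Lipschitz stability estimate} fails; then there are $q_1^{(j)},q_2^{(j)}\in\QQ_{[-a,a]}$ with $q_1^{(j)}\ne q_2^{(j)}$ and $l_j\ge j$ such that $\Phi_{l_j}(q_1^{(j)},q_2^{(j)})<\tfrac1j\norm{q_2^{(j)}-q_1^{(j)}}_{L^\infty(\Omega)}$. Normalising $r^{(j)}:=(q_2^{(j)}-q_1^{(j)})/\norm{q_2^{(j)}-q_1^{(j)}}_{L^\infty(\Omega)}$ and passing to a subsequence, I would arrange that $q_1^{(j)}\to\widehat q_1$, $q_2^{(j)}\to\widehat q_2$ in $\QQ_{[-a,a]}$, $r^{(j)}\to\widehat r$ in $\QQ$ with $\norm{\widehat r}_{L^\infty(\Omega)}=1$, $\norm{q_2^{(j)}-q_1^{(j)}}_{L^\infty(\Omega)}\to t\in[0,2a]$ (so $\widehat q_2=\widehat q_1+t\widehat r$), and --- using the uniform bound $D$ --- that $H_{q_1^{(j)},q_2^{(j)}}(\Omega_e)^\perp$ and $N_{q_i^{(j)}}$ converge in the Grassmannians of subspaces of dimension $\le2D$, resp.\ $\le D$; denote by $W$ the limit of $H_{q_1^{(j)},q_2^{(j)}}(\Omega_e)^\perp$, so that $P_{q_1^{(j)}q_2^{(j)}}\to I-P_W$ in operator norm.

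For $F$ in the finite-codimension subspace $H_{\widehat q_1,\widehat q_2}(\Omega_e)\cap W^\perp$ of $H(\Omega_e)$, set $G_j:=P_{q_1^{(j)}q_2^{(j)}}P_{H_{l_j}}F\in H_{q_1^{(j)},q_2^{(j)}}(\Omega_e)$; then $G_j\to F$ (using also $P_{H_{l_j}}\to I$ strongly) and, as sketched below, $S_{q_i^{(j)}}(G_j)\to S_{\widehat q_i}(F)$ in $H^s(\R^n)$ for $i=1,2$. Since $|\langle(\Lambda(q_2^{(j)})-\Lambda(q_1^{(j)}))G_j,G_j\rangle|\le\Phi_{l_j}(q_1^{(j)},q_2^{(j)})\norm{F}_{H(\Omega_e)}^2<\tfrac1j\norm{q_2^{(j)}-q_1^{(j)}}_{L^\infty(\Omega)}\norm{F}_{H(\Omega_e)}^2$, dividing by $\norm{q_2^{(j)}-q_1^{(j)}}_{L^\infty(\Omega)}$ and using Lemma~\ref{lemma:NtD_and_bilinear_forms} gives
\[
\left|\int_\Omega r^{(j)}\,S_{q_1^{(j)}}(G_j)\,S_{q_2^{(j)}}(G_j)\dx\right|<\tfrac1j\norm{F}_{H(\Omega_e)}^2\longrightarrow 0,
\]
hence in the limit $\int_\Omega\widehat r\,S_{\widehat q_1}(F)\,S_{\widehat q_2}(F)\dx=0$ for every $F\in H_{\widehat q_1,\widehat q_2}(\Omega_e)\cap W^\perp$. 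If $t>0$, multiplying by $t$ shows $\langle(\Lambda(\widehat q_2)-\Lambda(\widehat q_1))F,F\rangle=0$ on this subspace, i.e.\ $\Lambda(\widehat q_1)\stackrel{\text{fin}}{=}\Lambda(\widehat q_2)$, so Theorem~\ref{Thm:if-and-only-if monotonicity} yields $\widehat q_1\le\widehat q_2$ and $\widehat q_2\le\widehat q_1$, hence $\widehat q_1=\widehat q_2$, contradicting $t>0$. If $t=0$, then $\widehat q_1=\widehat q_2=:\widehat q$ and $\int_\Omega\widehat r\,|S_{\widehat q}(F)|^2\dx=0$ on a finite-codimension subspace of $H_{\widehat q}(\Omega_e)$; picking a positive measure set $M$ on which $\widehat r$ has constant sign with $|\widehat r|\ge\delta>0$ (possible since $\norm{\widehat r}_{L^\infty(\Omega)}=1$) and applying Theorem~\ref{thm:localized} for $\widehat q$ on $M$ inside this subspace produces $F^\nu$ with $\int_M|S_{\widehat q}(F^\nu)|^2\dx\to\infty$ and $\int_{\Omega\setminus M}|S_{\widehat q}(F^\nu)|^2\dx\to0$, so that $\bigl|\int_\Omega\widehat r\,|S_{\widehat q}(F^\nu)|^2\dx\bigr|\to\infty$, again a contradiction. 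This proves \eqref{Lipschitz stability estimate}.

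The main obstacle is the convergence $S_{q_i^{(j)}}(G_j)\to S_{\widehat q_i}(F)$, because when $\widehat q_i$ is resonant the solution operators $S_{q_i^{(j)}}$ need not converge in operator norm and the spaces $H_{q_i^{(j)}}(\Omega_e)$ vary, so one cannot simply invoke continuity of $q\mapsto S_q$. I would handle this with analytic perturbation theory for the self-adjoint operators $A_q:=I-\iota^*\iota+\iota^*M_q\iota$: since $q_i^{(j)}\to\widehat q_i$ in $L^\infty(\Omega)$ one has $M_{q_i^{(j)}}\to M_{\widehat q_i}$ in $\LL(L^2(\Omega))$ and hence $A_{q_i^{(j)}}\to A_{\widehat q_i}$ in $\LL(H_0^s(\Omega))$, so the spectral projection of $A_{q_i^{(j)}}$ onto the (uniformly at most $D$-dimensional) spectral cluster around the eigenvalue $0$ of $A_{\widehat q_i}$ converges in norm to the projection onto $N_{\widehat q_i}$, while $A_{q_i^{(j)}}$ is invertible with uniformly bounded inverse on the complementary spectral subspace; the restriction $F\in H_{\widehat q_1,\widehat q_2}(\Omega_e)$ is precisely what forces the right-hand sides determining $S_{q_i^{(j)}}(G_j)$ to be asymptotically orthogonal to this cluster, which gives the required norm convergence of the solutions. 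Once this is in place, the rest of the argument reduces entirely to Theorems~\ref{thm:localized} and \ref{Thm:if-and-only-if monotonicity} of Sections~\ref{Section 3}--\ref{Section 4}.
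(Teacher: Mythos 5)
Your proposal takes a genuinely different route from the paper. The paper's proof is an explicit finite construction: from the a-priori data $\QQ_{[-a,a]}$ and $\Omega$ alone it produces finitely many test sets $M_1,\ldots,M_m$ (Lemma~\ref{lemma:stability_sets}), finitely many comparison potentials $\widehat q_j$ against which any $q\in\QQ_{[-a,a]}$ is estimated by monotonicity (Lemma~\ref{lemma:stability_hatq}), uniform dimension bounds $d,N$ (Lemma~\ref{lemma:stability_dimbound}), and finitely many Dirichlet data $\widehat F_{i,j}$ built by iterating Theorem~\ref{thm:localized} against the \emph{fixed} $\widehat q_j$ (Lemma~\ref{lemma:stability_locpot}); the constant $c$ and the index $k$ fall out of these objects, and no limit of solution operators or potentials is ever taken. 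Your argument is a soft compactness--contradiction: normalize, pass to subsequential limits $\widehat q_1,\widehat q_2,\widehat r,t$, and exclude the degenerate limit using Theorem~\ref{Thm:if-and-only-if monotonicity} for $t>0$ and Theorem~\ref{thm:localized} for $t=0$. Your preliminary reduction (a uniform bound on $d(q)$ and $\dim N_q$) coincides with the paper's Lemma~\ref{lemma:stability_dimbound}, and the case distinction $t>0$ versus $t=0$ is sensible. Were the convergence step sound, this would be a shorter argument, at the cost of being entirely non-constructive (no handle on $c$ or $k$).

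The convergence step is, however, where the proposal has a genuine gap, and it is not closed by the perturbation-theoretic sketch you offer. You need $S_{q_i^{(j)}}(G_j)\to S_{\widehat q_i}(F)$ for all $F$ in a finite-codimension subspace of $H(\Omega_e)$, and you justify it by noting that $F\in H_{\widehat q_1,\widehat q_2}(\Omega_e)$ forces the right-hand side determining $S_{q_i^{(j)}}(G_j)$ to be \emph{asymptotically orthogonal} to the spectral cluster of $A_{q_i^{(j)}}:=I-\iota^*\iota+\iota^*M_{q_i^{(j)}}\iota$ near $0$. This only shows the cluster component of the right-hand side tends to zero; it does not control the \emph{ratio} of this component to the small nonzero eigenvalues of $A_{q_i^{(j)}}$ inside the cluster, and those two quantities vanish at unrelated rates. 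Concretely, if $\widehat q_i$ is resonant, $0\neq w\in N_{\widehat q_i}$, and $q_i^{(j)}=\widehat q_i+\epsilon_j p$ with $\epsilon_j\to0$ and $\left(\iota^*M_p\iota\,w,w\right)_{H_0^s(\Omega)}\neq0$, then $A_{q_i^{(j)}}$ is invertible (so $N_{q_i^{(j)}}=\{0\}$, $H_{q_i^{(j)}}(\Omega_e)=H(\Omega_e)$) yet has an eigenvalue of size comparable to $\epsilon_j$, while the cluster component of the right-hand side is only $O\!\left(\norm{G_j-F}_{H(\Omega_e)}+\epsilon_j\right)$; and $\norm{G_j-F}_{H(\Omega_e)}$ is governed by $\norm{P_{H_{l_j}}F-F}_{H(\Omega_e)}$, a property of the prescribed sequence $(H_l)_{l\in\N}$ with no relation to $\epsilon_j$. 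Hence $S_{q_i^{(j)}}(G_j)$ need not stay bounded, and the limiting identity $\int_\Omega\widehat r\,S_{\widehat q_1}(F)\,S_{\widehat q_2}(F)\dx=0$ on a finite-codimension subspace --- on which both your $t>0$ and $t=0$ branches rely --- is not established. (The Grassmannian convergence of $H_{q_1^{(j)},q_2^{(j)}}(\Omega_e)^\perp$ is also asserted without proof; it does hold, via compactness of $\iota^*\iota-\iota^*M_q\iota$, but deserves an argument.) The resonance case is precisely what the paper's explicit construction is designed to avoid: the $\widehat q_j$ and $\widehat F_{i,j}$ are fixed in advance, all monotonicity estimates hold on subspaces whose codimension is bounded a priori by Lemma~\ref{lemma:stability_dimbound}, and no convergence of solution operators near a resonance is ever required.
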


Before we prove Theorem \ref{thm:stability}, let us briefly remark on its implications for some special cases.

\begin{remark}
Theorem \ref{thm:stability} implies that there exists $c>0$ so that
\[
\norm{\Lambda(q_2)-\Lambda(q_1)}_{\LL(H_{q_1,q_2}(\Omega_e),H(\Omega_e)^*)}
\geq c \norm{q_2-q_1}_{L^\infty(\Omega)} \quad \text{for all $q_1,q_2\in \QQ_{[-a,a]}$.}
\] 

If $\{F_1,F_2,\ldots\}\subseteq H(\Omega_e)$ is a set of Dirichlet values whose linear span is dense in $H(\Omega_e)$,
then Theorem \ref{thm:stability} implies that there exists $k\in \N$, so that every non-resonant $q\in \QQ_{[-a,a]}$ is uniquely determined by the finitely many entries of the matrix
\[
A(q)=\left(\langle   \Lambda(q) F_i,  F_j\rangle\right)_{i,j=1,\ldots,k}\in \R^{k\times k}.
\]
Moreover, if $\{F_1,F_2,\ldots\}$ is an orthonormal (Schauder) basis of $H(\Omega_e)$, then there exists $k\in \N$, and $c>0$,
so that 
\[
\norm{A(q_2)-A(q_1)}_2\geq c \norm{q_2-q_1}_{L^\infty(\Omega)} \quad \text{ for all non-resonant $q\in \QQ_{[-a,a]}$,}
\]
where $\norm{A}_2$ is the spectral norm of the matrix $A\in \R^{k\times k}$.
\end{remark}

The general outline of the proof of Theorem \ref{thm:stability} is as follows. In Lemma~\ref{lemma:stability_sets}, we will 
derive a number of subsets $M_1,\ldots,M_m\subseteq \Omega$, on which normalized potential differences can be estimated from above or below. Then we define for each of these sets a special potential $\widehat q_j\in L^\infty(\Omega)$, which is large on $M_j$ and small on $\Omega\setminus M_j$, and show (in Lemma~\ref{lemma:stability_hatq}) that certain energy terms 
for the solutions for an arbitrary $q\in L^\infty(\Omega)$ can always be estimated by solutions corresponding to these special potentials $\widehat q_j$. Lemma~\ref{lemma:stability_dimbound} gives a bound on the maximal codimension of the subspaces arising from resonances, and Lemma~\ref{lemma:stability_locpot} shows the existence of sufficiently many (depending on the maximal codimension) Dirichlet boundary values $\widehat F_{ij}$ to control the energy terms arising from the special potentials $\widehat q_j$. 
The constant $c>0$ of the Lipschitz stability estimate \eqref{Lipschitz stability estimate} and the subspace index $k\in \N$ for Theorem \ref{thm:stability}, will be defined in Lemma~\ref{lemma:stability_locpot} via the maximal norm of the finitely many Dirichlet values $\widehat F_{ij}$, and the possibility of 
sufficiently well approximating $\widehat F_{ij}$ in $H_k$. Finally, we prove that Theorem \ref{thm:stability} holds with these constants $c>0$ and $k\in \N$. 

Let us stress that this construction (the sets $M_1,\ldots,M_m$, the finitely many special potentials $\widehat q_j$, the dimension bounds, the finitely many special Dirichlet data $\widehat F_{ij}$, and thus the constant $c>0$ of \eqref{Lipschitz stability estimate}, and the subspace index $k\in \N$) do only depend on the a-priori data $\QQ_{[a,b]}$ and $\Omega\subseteq \R^n$.

To motivate the first lemma, let
us note that a piecewise constant function on some partition of $\Omega$ with $L^\infty(\Omega)$-norm equal to $1$,
must be either $1$ or $-1$ on at least one of the subsets of the partition, which is a useful property for applying monotonicity estimates,
cf., e.g., \cite{harrach2019global}. The following lemma generalizes this property to our arbitrary finite-dimensional subspace $\QQ\subset L^\infty(\Omega)$.

\begin{lemma}\label{lemma:stability_sets}
Let $\QQ_1:=\{r\in \QQ:\ \norm{r}_{L^\infty(\Omega)}=1\}$.
There exists a family of measurable subsets $M_1,\ldots,M_m$, $m\in \N$,
with positive measure, so that for all $r\in \QQ_1$, there exists $j\in \{1,\ldots,m\}$ with either
$r|_{M_j}\geq \frac{1}{2}$, or $r|_{M_j}\leq -\frac{1}{2}$.
Hence, either 
\[
r\geq \frac{1}{2} \chi_{M_j} - \chi_{\Omega\setminus M_j}, \quad \text{ or } \quad -r\geq \frac{1}{2} \chi_{M_j} - \chi_{\Omega\setminus M_j}.
\]
\end{lemma}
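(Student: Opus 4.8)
The plan is to use that $\QQ$ is finite dimensional, so that the unit sphere $\QQ_1=\{r\in\QQ:\ \norm{r}_{L^\infty(\Omega)}=1\}$ is a compact subset of $L^\infty(\Omega)$ (all norms on a finite dimensional vector space being equivalent), and then to cover $\QQ_1$ by finitely many small balls on each of which a single set $M_j$ will do.

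For a fixed $r\in\QQ_1$ I would first produce one admissible set. Since $\norm{r}_{L^\infty(\Omega)}=1>\tfrac34$, the set $\{x\in\Omega:\ |r(x)|>\tfrac34\}$ has positive measure, hence at least one of the two disjoint measurable sets $\{r>\tfrac34\}$ and $\{r<-\tfrac34\}$ has positive measure; I denote such a choice by $M_r$ and record the corresponding sign $\sigma_r\in\{+1,-1\}$, so that $\sigma_r r>\tfrac34$ a.e.\ on $M_r$. The point is that this strict inequality is stable under $L^\infty(\Omega)$-perturbations of size less than $\tfrac14$: if $\widetilde r\in\QQ$ satisfies $\norm{\widetilde r-r}_{L^\infty(\Omega)}<\tfrac14$, then $\sigma_r\widetilde r\ge\sigma_r r-\tfrac14>\tfrac12$ a.e.\ on $M_r$.

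Next I would invoke compactness. The open balls $B_r:=\{\widetilde r\in\QQ:\ \norm{\widetilde r-r}_{L^\infty(\Omega)}<\tfrac14\}$, indexed by $r\in\QQ_1$, form an open cover of the compact set $\QQ_1$, so there are $r_1,\ldots,r_m\in\QQ_1$ with $\QQ_1\subseteq\bigcup_{j=1}^m B_{r_j}$. Put $M_j:=M_{r_j}$ and $\sigma_j:=\sigma_{r_j}$; then $M_1,\ldots,M_m$ are measurable sets of positive measure and, given any $r\in\QQ_1$, choosing $j$ with $r\in B_{r_j}$ yields $\sigma_j r>\tfrac12$ a.e.\ on $M_j$ by the stability observation, i.e.\ $r|_{M_j}\ge\tfrac12$ if $\sigma_j=+1$ and $r|_{M_j}\le-\tfrac12$ if $\sigma_j=-1$. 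The displayed pointwise bound then follows at once, because $\norm{r}_{L^\infty(\Omega)}=1$ forces $-1\le r\le1$ a.e.\ in $\Omega$, so $r\ge-\chi_{\Omega\setminus M_j}$ a.e.\ on $\Omega\setminus M_j$, and combining this with $r\ge\tfrac12\chi_{M_j}$ on $M_j$ gives $r\ge\tfrac12\chi_{M_j}-\chi_{\Omega\setminus M_j}$; the case $\sigma_j=-1$ is identical with $r$ replaced by $-r$.

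I do not expect a genuine obstacle here, as this is a routine compactness argument; the only points that need a little care are the measurability and positivity of $M_r$ (which is why I pass to one of the one-sided superlevel sets rather than to $\{|r|>\tfrac34\}$) and the bookkeeping of the sign $\sigma_j$, where the gap $\tfrac34-\tfrac14=\tfrac12$ between the threshold and the perturbation radius is tuned precisely to reach the constant $\tfrac12$ in the statement.
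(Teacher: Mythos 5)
Your proof is correct and follows essentially the same compactness argument as the paper: pick a positive-measure superlevel set $M_r$ for each $r$, build a small open neighborhood of $r$ on which membership in $M_r$ still forces $|\widetilde r|>\tfrac12$, and extract a finite subcover of $\QQ_1$. The only differences are cosmetic — you threshold at $\tfrac34$ and perturb by $\tfrac14$ in $L^\infty(\Omega)$, whereas the paper thresholds at $\tfrac12$ and measures perturbations in $L^\infty(M_r)$ — and you spell out explicitly both the compactness of $\QQ_1$ and the final pointwise inequality, which the paper leaves implicit.
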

\begin{proof}
We argue by compactness. For $r\in \QQ_1$, $\norm{r}_{L^\infty(\Omega)}=1$ implies that at least one
of the sets $r^{-1}(]\frac{1}{2},\frac{3}{2}[)$ or $r^{-1}(]-\frac{3}{2},-\frac{1}{2}[)$ must be of positive measure.
In the first case we define 
\[
M_r:=r^{-1}\left( \left] \textstyle \frac{1}{2},\frac{3}{2}\right[ \right),\quad 
\mathcal O_r:=\left\{ \widetilde r\in L^\infty(\Omega):\ \textstyle \norm{\widetilde r|_{M_r}-\chi_{M_r}}_{L^\infty(M_r)}<\frac{1}{2} \right\},
\]
and otherwise we define
\[
M_r:=r^{-1}\left( \left] \textstyle -\frac{3}{2},-\frac{1}{2}\right[ \right),\quad 
\mathcal O_r:=\left\{ \widetilde r\in L^\infty(\Omega):\ \textstyle \norm{\widetilde r|_{M_r}+\chi_{M_r}}_{L^\infty(M_r)}<\frac{1}{2} \right\}.
\]
Then $M_r$ has positive measure, $\mathcal O_r$ is an open subset of $L^\infty(\Omega)$, and $r\in \mathcal O_r$ implies that
\[
\QQ_1\subseteq \bigcup_{r\in \QQ_1} \mathcal O_r.
\]
By compactness, there exist $r_1,\ldots,r_m\in \QQ_1$ with $\QQ_1\subseteq \bigcup_{j=1,\ldots,m} \mathcal O_{r_j}$,
so that the assertion follows with $M_j:=M_{r_j}$, $j=1,\ldots,m$.
\end{proof}

We now use the idea from the constructive Lipschitz stability proof in \cite[Section 5]{harrach2019global} to
replace general potentials from $\QQ_{[-a,a]}$ by a finite number of special potentials.
\begin{lemma}\label{lemma:stability_hatq}
With the constant $a>0$ and the sets $M_1,\ldots,M_m$ from Lemma~\ref{lemma:stability_sets}, we define 
\[
\widehat q_j\in L^\infty(\Omega)\quad \text{ by } \quad \widehat q_j:=2a \chi_{M_j} - 7a \chi_{\Omega\setminus M_j}, \quad j=1,\ldots,m.
\]

If $q\in \QQ_{[-a,a]}$ and $r\in \QQ_1$ fulfills $r\geq \frac{1}{2} \chi_{M_j} - \chi_{\Omega\setminus M_j}$ with $j\in \{1,\ldots,m\}$,
then there exists a subspace $V\subseteq H_{q,\widehat q_j}(\Omega_e)$ with $\dim V \leq d(q)+d(\widehat q_j)$, so that
\[
\int_\Omega r |S_q(F)|^2\dx \geq  \int_\Omega \left( \frac{1}{6} \chi_{M_j} - \frac{4}{3} \chi_{\Omega\setminus M_j} \right) |S_{\widehat q_j}(F)|^2 \dx \quad \text{for all } F\in V^\perp\subseteq H_{q,\widehat q_j}(\Omega_e).
\]
\end{lemma}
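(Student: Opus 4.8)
The plan is to bound $\int_\Omega r\,|S_q(F)|^2\dx$ from below by the desired expression through a chain of inequalities, alternating two applications of the monotonicity relations of Theorem~\ref{Theorem for monotonicity} and Remark~\ref{remark_monotonicity} (applied to the pair $q$, $\widehat q_j$) with elementary pointwise comparisons of step functions --- this is the ``repeated monotonicity'' device already used in the proof of Theorem~\ref{thm:support_from_closed_sets}(b). The point that makes everything fit together is that the coefficients $2a$ and $-7a$ defining $\widehat q_j$ are chosen so that, using only $-a\le q\le a$ a.e.\ (as $q\in\QQ_{[-a,a]}$) and the hypothesis $r\ge\frac12\chi_{M_j}-\chi_{\Omega\setminus M_j}$, one gets the two pointwise bounds
\[
\widehat q_j-q\ \le\ 3a\chi_{M_j}-6a\chi_{\Omega\setminus M_j}=6a\Bigl(\tfrac12\chi_{M_j}-\chi_{\Omega\setminus M_j}\Bigr)\le 6a\,r\qquad\text{and}\qquad
\widehat q_j-q\ \ge\ a\chi_{M_j}-8a\chi_{\Omega\setminus M_j}
\]
a.e.\ in $\Omega$ (the first using $q\ge -a$, the second using $q\le a$), and that $\frac1{6a}(a\chi_{M_j}-8a\chi_{\Omega\setminus M_j})=\frac16\chi_{M_j}-\frac43\chi_{\Omega\setminus M_j}$ is precisely the target weight.

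With these in hand I would argue in three steps. First, since $|S_q(F)|^2\ge0$ and $r\ge\frac1{6a}(\widehat q_j-q)$ a.e., integration gives $\int_\Omega r|S_q(F)|^2\dx\ge\frac1{6a}\int_\Omega(\widehat q_j-q)|S_q(F)|^2\dx$ for every $F\in H_{q,\widehat q_j}(\Omega_e)$. Second, I would apply the monotonicity relations with $q_1:=\widehat q_j$, $q_2:=q$: the interchanged version in Remark~\ref{remark_monotonicity} gives a subspace $V_1\subseteq H_{q,\widehat q_j}(\Omega_e)$ with $\dim V_1\le d(\widehat q_j)$ on whose orthogonal complement $\Langle(\Lambda(\widehat q_j)-\Lambda(q))F,F\Rangle\le\int_\Omega(\widehat q_j-q)|S_q(F)|^2\dx$, while Theorem~\ref{Theorem for monotonicity} gives a subspace $V_2\subseteq H_{q,\widehat q_j}(\Omega_e)$ with $\dim V_2\le d(q)$ on whose orthogonal complement $\Langle(\Lambda(\widehat q_j)-\Lambda(q))F,F\Rangle\ge\int_\Omega(\widehat q_j-q)|S_{\widehat q_j}(F)|^2\dx$. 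Taking $V:=(V_1^\perp\cap V_2^\perp)^\perp$ inside $H_{q,\widehat q_j}(\Omega_e)$ (so $\dim V\le d(q)+d(\widehat q_j)$) and combining, for all $F\in V^\perp\subseteq H_{q,\widehat q_j}(\Omega_e)$,
\[
\int_\Omega r|S_q(F)|^2\dx\ \ge\ \frac1{6a}\Langle(\Lambda(\widehat q_j)-\Lambda(q))F,F\Rangle\ \ge\ \frac1{6a}\int_\Omega(\widehat q_j-q)|S_{\widehat q_j}(F)|^2\dx.
\]
Third, using $|S_{\widehat q_j}(F)|^2\ge0$ and the lower pointwise bound $\widehat q_j-q\ge a\chi_{M_j}-8a\chi_{\Omega\setminus M_j}$, the right-hand side is $\ge\int_\Omega\bigl(\frac16\chi_{M_j}-\frac43\chi_{\Omega\setminus M_j}\bigr)|S_{\widehat q_j}(F)|^2\dx$, which is the claim.

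I do not expect a real obstacle; the argument is essentially bookkeeping of signs, constants and excluded dimensions. The two points that need care are (i) checking the pointwise step-function inequalities displayed above --- this is exactly where the at-first-sight arbitrary coefficients in the definition of $\widehat q_j$ are used, and where one sees why they produce the target weights $\frac16$ and $-\frac43$; and (ii) controlling the exceptional subspaces: both $V_1$ and $V_2$ lie inside $H_{q,\widehat q_j}(\Omega_e)$ (where both $S_q$ and $S_{\widehat q_j}$ are defined), so intersecting their orthogonal complements within that space costs at most $d(q)+d(\widehat q_j)$ dimensions, matching the stated bound.
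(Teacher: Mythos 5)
Your proposal is correct and follows essentially the same route as the paper's proof: the same pointwise bounds $a\chi_{M_j}-8a\chi_{\Omega\setminus M_j}\le \widehat q_j - q \le 3a\chi_{M_j}-6a\chi_{\Omega\setminus M_j}$, the same two-sided monotonicity estimate from Remark~\ref{remark_monotonicity} giving the $d(q)+d(\widehat q_j)$ dimension bound, and the same chaining argument through $\Langle(\Lambda(\widehat q_j)-\Lambda(q))F,F\Rangle$.
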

\begin{proof}
Let $q\in \QQ_{[-a,a]}$ and $r\in \QQ_1$ fulfill $r\geq \frac{1}{2} \chi_{M_j} - \chi_{\Omega\setminus M_j}$ with $j\in \{1,\ldots,m\}$.
Then we obtain from Remark~\ref{remark_monotonicity} a subspace $V\subseteq H_{q,\widehat q_j}(\Omega_e)$ with $\dim V \leq d(q)+d(\widehat q_j)$, so that for all $F\in V^\perp\subseteq H_{q,\widehat q_j}(\Omega_e) $
\begin{align*}
\int_\Omega (\widehat q_j-q)|S_{\widehat q_j}(F)|^2 \dx\leq
\Langle \left(\Lambda(\widehat q_j)-\Lambda(q)\right) F, F \Rangle 
\leq \int_\Omega (\widehat q_j-q)|S_q(F)|^2 \dx.
\end{align*}
Observe that 
\[
a \chi_{M_j} - 8a \chi_{\Omega\setminus M_j}\leq \widehat q_j-q \leq 3a \chi_{M_j} - 6a \chi_{\Omega\setminus M_j},
\]
then it follows for all $F\in V^\perp\subseteq H_{q,\widehat q_j}(\Omega_e)$ 
\begin{align*}
\lefteqn{\quad \int_\Omega r |S_q(F)|^2\dx  \geq \int_\Omega \left( \frac{1}{2} \chi_{M_j} - \chi_{\Omega\setminus M_j}\right) |S_q(F)|^2\dx}\\
&= \frac{1}{6a} \int_\Omega \left( 3a \chi_{M_j} - 6a \chi_{\Omega\setminus M_j}\right) |S_q(F)|^2\dx
\geq \frac{1}{6a} \int_\Omega \left( \widehat q_j - q \right) |S_q(F)|^2\dx\\
&\geq \frac{1}{6a}  \int_\Omega (\widehat q_j-q)|S_{\widehat q_j}(F)|^2 \dx
 \geq \frac{1}{6a}  \int_\Omega \left(a \chi_{M_j} - 8a \chi_{\Omega\setminus M_j} \right) |S_{\widehat q_j}(F)|^2 \dx\\
& =   \int_\Omega \left( \frac{1}{6} \chi_{M_j} - \frac{4}{3} \chi_{\Omega\setminus M_j} \right) |S_{\widehat q_j}(F)|^2 \dx.
\end{align*}
\end{proof}

The next lemma shows that the codimension of the subspaces where the DtN operators are defined, and 
the subspaces where the monotonicity relations hold, can be uniformly bounded in $\QQ_{[-a,a]}$.
\begin{lemma}\label{lemma:stability_dimbound}
There exists numbers $d,N\in \N$, so that
\[
\dim (N_{q})\leq N \quad \text{ and } \quad d(q)\leq d \quad \text{ for all } q\in \QQ_{[-a,a]},
\]
where $N_q$ is defined by \eqref{N_q} and $d(q)$ is given by Definition \ref{def:dimension_d}.
\end{lemma}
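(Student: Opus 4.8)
The plan is to reduce the statement to a uniform bound on the number of ``large'' eigenvalues of a single compact self-adjoint operator that depends continuously on $q$. Write $K_q := \iota^*\iota - \iota^* M_q \iota \in \LL(H_0^s(\Omega),H_0^s(\Omega))$, which is compact and self-adjoint. By Definition~\ref{def:dimension_d}, $d(q)$ is the number of eigenvalues of $K_q$ strictly greater than $1$, and by \eqref{N_q} the space $N_q$ is the eigenspace of $K_q$ for the eigenvalue $1$; hence $f(q):=d(q)+\dim(N_q)$ is exactly the number (counted with multiplicity) of eigenvalues of $K_q$ lying in $[1,\infty)$. It therefore suffices to show that $f$ is bounded on $\QQ_{[-a,a]}$, and then to set $d=N:=\sup_{q\in\QQ_{[-a,a]}} f(q)$.

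First I would record the Lipschitz dependence of $K_q$ on $q$: for $q_1,q_2\in L^\infty(\Omega)$,
\[
\norm{K_{q_1}-K_{q_2}}_{\LL(H_0^s(\Omega),H_0^s(\Omega))}=\norm{\iota^* M_{q_1-q_2}\iota}\leq \norm{\iota}^2\,\norm{q_1-q_2}_{L^\infty(\Omega)} .
\]
Denoting by $\lambda_1(K_q)\geq\lambda_2(K_q)\geq\dots$ the eigenvalues produced by the Courant--Fischer--Weyl min-max principle (with the convention $\lambda_k(K):=\max_{\dim X=k}\min_{v\in X,\ \norm{v}=1}(Kv,v)$, exactly as used in the proof of Theorem~\ref{thm:dimensions}), the min-max characterisation yields $|\lambda_k(K_{q_1})-\lambda_k(K_{q_2})|\leq\norm{K_{q_1}-K_{q_2}}$ for every $k$, so each map $q\mapsto\lambda_k(K_q)$ is continuous on $\QQ_{[-a,a]}$.

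The main step is then a compactness argument. Since $\QQ$ is finite-dimensional, $\QQ_{[-a,a]}$ is compact, so if $f$ were unbounded I could pick $q_j\in\QQ_{[-a,a]}$ with $f(q_j)\to\infty$ and, passing to a subsequence, $q_j\to q_*$ in $L^\infty(\Omega)$ with $q_*\in\QQ_{[-a,a]}$. The operator $K_{q_*}$ is compact, so its eigenvalues accumulate only at $0$; hence there is $D\in\N$ with $\lambda_{D+1}(K_{q_*})<1$ (with the convention above this also covers the case that $K_{q_*}$ has at most $D$ nonnegative eigenvalues). By continuity $\lambda_{D+1}(K_{q_j})\to\lambda_{D+1}(K_{q_*})<1$, so $\lambda_{D+1}(K_{q_j})<1$ for all large $j$, which forces $f(q_j)\leq D$ and contradicts $f(q_j)\to\infty$. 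Thus $f$ is bounded and the lemma follows.

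The only genuinely delicate point is that the uniform norm bound $\norm{K_q}\leq\norm{\iota}^2(1+a)$ by itself does not control the number of eigenvalues in $[1,\infty)$; one needs some equicompactness of the family $\{K_q\}$, here supplied by the compactness of $\QQ_{[-a,a]}$. An alternative route, which avoids using that $\QQ$ is finite-dimensional, is to let $P_n$ be the orthogonal projection onto the span of the eigenvectors of $\iota^*\iota$ for its $n$ largest eigenvalues; then $\norm{\iota(I-P_n)}\to 0$, and since $\norm{M_q}\leq a$ one gets $\norm{K_q-P_nK_qP_n}\to 0$ as $n\to\infty$, uniformly over all $q$ with $\norm{q}_{L^\infty(\Omega)}\leq a$. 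Choosing $n$ so that this error is $<1$ and using that $P_nK_qP_n$ is self-adjoint of rank at most $n$, Weyl's perturbation inequality gives $\lambda_{n+1}(K_q)<1$ directly, so one may take $d=N:=n$.
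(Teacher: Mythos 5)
Your proof is correct, and it takes a somewhat different route from the paper's. The paper splits the two bounds: for $d(q)$ it applies Theorem~\ref{thm:dimensions}(a) directly, noting that $q\geq -a$ for every $q\in\QQ_{[-a,a]}$ forces $d(q)\leq d(-a)$, which costs nothing and needs no compactness; for $\dim(N_q)$ it invokes Theorem~\ref{thm:dimensions}(b), which gives a neighborhood of each $q_1$ on which $\dim(N_{q_2})\leq\dim(N_{q_1})$, and then covers the compact set $\QQ_{[-a,a]}$ by finitely many such neighborhoods. You instead bound both quantities at once by observing that $d(q)+\dim(N_q)$ counts the eigenvalues of $K_q=\iota^*\iota-\iota^*M_q\iota$ in $[1,\infty)$, showing Lipschitz continuity of the min--max eigenvalues $\lambda_k(K_q)$ in $q$, and running a compactness/contradiction argument on $\QQ_{[-a,a]}$. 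This is essentially the same continuity-plus-compactness ingredient that underlies the ``standard compactness argument'' the paper alludes to for $\dim(N_q)$, just applied to the combined eigenvalue count rather than to $\dim(N_q)$ alone; it redoes the work of Theorem~\ref{thm:dimensions}(a) rather than reusing it, at the price of being slightly less economical for the $d(q)$ bound. Your alternative via finite-rank truncation $P_nK_qP_n$ and Weyl's inequality is a genuine strengthening worth noting: it yields a uniform bound over the entire ball $\{q\in L^\infty(\Omega):\norm{q}_{L^\infty(\Omega)}\leq a\}$, not just over the finite-dimensional slice $\QQ_{[-a,a]}$, whereas the paper's argument for $\dim(N_q)$ genuinely needs compactness and hence finite-dimensionality of $\QQ$.
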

\begin{proof}
The first assertion follows from Theorem \ref{thm:dimensions}(b) with a standard compactness argument. 
The second assertion follows from Theorem \ref{thm:dimensions}(a) with $d:=d(-a)$, where $d(-a)$ is the number defined in Definition~\ref{def:dimension_d} for $q\equiv -a$.
\end{proof}

Our last lemma demonstrates how to control the energy terms in Lemma \ref{lemma:stability_hatq}, and defines the Lipschitz
stability constant $c>0$ and the subspace index $k\in \N$, with which the assertion of Theorem \ref{thm:stability} holds.
\begin{lemma}\label{lemma:stability_locpot}
	Let $d,N\in \N$ be the numbers given in Lemma \ref{lemma:stability_dimbound}, then we have 
\begin{enumerate}[(a)]
\item For all $j\in \{1,\ldots,m\}$, there exist Dirichlet data $\widehat F_{i,j}\in H_{\widehat q_j}(\Omega_e)$ with
\begin{align}
\label{eq:stability_locpot_hatF}
\int_\Omega \left( \frac{1}{6} \chi_{M_j} - \frac{4}{3} \chi_{\Omega\setminus M_j} \right) |S_{\widehat q_j}(\widehat F_{i,j})|^2 \dx  &\geq 2,\\
\label{eq:stability_locpot_hatF_ortho1}
\int_\Omega \left( \frac{1}{6} \chi_{M_j} - \frac{4}{3} \chi_{\Omega\setminus M_j} \right) S_{\widehat q_j}(\widehat F_{i,j}) S_{\widehat q_j}(\widehat F_{i',j}) \dx  &= 0,\\
\label{eq:stability_locpot_hatF_ortho2}
\left( \widehat F_{i,j}, \widehat F_{i',j} \right)_{H(\Omega_e)}  &= 0,
\end{align}
for all $i,i'=1,\ldots, 3d+2N+1$ with $i'\neq i$.
We set
\[
c:=2 \max\left\{ \norm{\widehat F_{i,j}}_{H(\Omega_e)}^2:\ j=1,\ldots,m,\ i=1,\ldots, 3d+2N+1\right\}.
\]
\item For $\delta:=\frac{1}{3d+2n+2}$, and for each sequence of subspaces 
\[
H_1\subseteq H_2\subseteq H_3\subseteq ...\subseteq H(\Omega_e), \quad \text{ with } \quad
\overline{\bigcup_{l\in \N} H_l}=H(\Omega_e),
\]
there exists $k\in \N$, and $F_{i,j}\in H_k\cap H_{\widehat q_j}(\Omega_e)$, so that
\begin{align}
\label{eq:stability_locpot_Fij}
\int_\Omega \left( \frac{1}{6} \chi_{M_j} - \frac{4}{3} \chi_{\Omega\setminus M_j} \right) |S_{\widehat q_j}(F_{i,j})|^2 \dx  &\geq 2-\delta,\\
\label{eq:stability_locpot_Fij_ortho1}
\left|\int_\Omega \left( \frac{1}{6} \chi_{M_j} - \frac{4}{3} \chi_{\Omega\setminus M_j} \right) S_{\widehat q_j}(F_{i,j}) S_{\widehat q_j}(F_{i',j}) \dx\right| &\leq \delta,\\
\label{eq:stability_locpot_Fij_ortho2}
\left| \left(  F_{i,j}, F_{i',j} \right)_{H(\Omega_e)} \right|  &\leq  \frac{c}{2}\delta,
\end{align}
and $\norm{F_{i,j}}_{H(\Omega_e)}^2\leq (1+\delta)\frac{c}{2}$
for all $j=1,\ldots,m$, and all $i,i'=1,\ldots, 3d+2N+1$ with $i'\neq i$.
\item For all $j=1,\ldots,m$, all subspaces $V\subseteq H_{\widehat q_j}(\Omega_e)$, with $\dim V \leq 3d+2N$, contain an
element $F_j\in V^\perp \cap H_k$ with
\[
\int_\Omega \left( \frac{1}{6} \chi_{M_j} - \frac{4}{3} \chi_{\Omega\setminus M_j} \right) |S_{\widehat q_j}(F_{j})|^2 \dx \geq 1,
\quad \text{ and } \quad \norm{F_{j}}_{H(\Omega_e)}^2\leq c.
\]
\end{enumerate}
\end{lemma}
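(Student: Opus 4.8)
The plan is to establish (a), (b) and (c) in that order; part (a) carries the analytic content, while (b) and (c) are a perturbation argument and a small linear-algebra argument layered on top of it. Throughout write $w_j:=\frac{1}{6}\chi_{M_j}-\frac{4}{3}\chi_{\Omega\setminus M_j}\in L^\infty(\Omega)$.

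For part (a), fix $j\in\{1,\dots,m\}$ and build $\widehat F_{1,j},\dots,\widehat F_{3d+2N+1,j}$ one at a time. Suppose $\widehat F_{1,j},\dots,\widehat F_{i-1,j}\in H_{\widehat q_j}(\Omega_e)$ have already been chosen. For each $i'<i$ the map $F\mapsto \int_\Omega w_j\, S_{\widehat q_j}(F)\,S_{\widehat q_j}(\widehat F_{i',j})\dx$ is a bounded linear functional on $H_{\widehat q_j}(\Omega_e)$, since $S_{\widehat q_j}$ maps $H_{\widehat q_j}(\Omega_e)$ boundedly into $L^2(\Omega)$ (Corollary~\ref{cor:Dirichlet_bvp}) and $w_j S_{\widehat q_j}(\widehat F_{i',j})\in L^2(\Omega)$; applying the Riesz representation theorem in $H_{\widehat q_j}(\Omega_e)$ it equals $(\cdot,r_{i'})_{H(\Omega_e)}$ for some $r_{i'}\in H_{\widehat q_j}(\Omega_e)$. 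Put $V_i:=\mathrm{span}\{\widehat F_{1,j},\dots,\widehat F_{i-1,j},r_1,\dots,r_{i-1}\}$, a finite-dimensional subspace of $H_{\widehat q_j}(\Omega_e)$. Applying the localized potentials Theorem~\ref{thm:localized} with $q:=\widehat q_j$, $M:=M_j$ and excluded subspace $V:=V_i$ yields a sequence in $V_i^\perp\subseteq H_{\widehat q_j}(\Omega_e)$ along which $\int_{M_j}|S_{\widehat q_j}(\cdot)|^2\dx\to\infty$ and $\int_{\Omega\setminus M_j}|S_{\widehat q_j}(\cdot)|^2\dx\to 0$, hence along which $\int_\Omega w_j|S_{\widehat q_j}(\cdot)|^2\dx\to+\infty$. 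Choosing a member of this sequence for which the last quantity exceeds $2$ and calling it $\widehat F_{i,j}$ gives \eqref{eq:stability_locpot_hatF}, while $\widehat F_{i,j}\in V_i^\perp$ gives the $H(\Omega_e)$-orthogonality \eqref{eq:stability_locpot_hatF_ortho2} and the energy-orthogonality \eqref{eq:stability_locpot_hatF_ortho1}. Defining $c$ as in the statement finishes (a); note that everything so far depends only on $\widehat q_j$, $M_j$, $\Omega$ and $d,N$.

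For part (b), the crucial observation is that $\overline{\bigcup_l\big(H_l\cap H_{\widehat q_j}(\Omega_e)\big)}=H_{\widehat q_j}(\Omega_e)$ for each $j$. Since $H_{\widehat q_j}(\Omega_e)$ is a closed subspace of $H(\Omega_e)$ of finite codimension (Corollary~\ref{cor:Dirichlet_bvp}), this is the elementary fact that an increasing sequence of subspaces with dense union in a Hilbert space still has dense union after intersecting with a fixed closed subspace of finite codimension; one reduces to the codimension-one case $Y=v^\perp$ by induction on the codimension, and there approximates $f\in Y$ by $P_{H_l}f$ and corrects to $P_{H_l}f-\frac{(P_{H_l}f,v)}{\|P_{H_l}v\|^2}P_{H_l}v\in H_l\cap v^\perp$, which still converges to $f$ because $(P_{H_l}f,v)\to(f,v)=0$ and $\|P_{H_l}v\|\to\|v\|\neq 0$. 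Hence, for any prescribed tolerance we may pick $F_{i,j}\in H_k\cap H_{\widehat q_j}(\Omega_e)$ with $\|F_{i,j}-\widehat F_{i,j}\|_{H(\Omega_e)}$ as small as we like, with a common $k$ (the maximum of the finitely many indices obtained for each $j$). By continuity of $S_{\widehat q_j}$ — hence of $F\mapsto\int_\Omega w_j|S_{\widehat q_j}(F)|^2\dx$, of the associated bilinear form, and of the $H(\Omega_e)$-inner product — and since $\|\widehat F_{i,j}\|_{H(\Omega_e)}^2\leq\frac{c}{2}$ by the definition of $c$, a sufficiently small tolerance turns the exact relations of (a) into \eqref{eq:stability_locpot_Fij}, \eqref{eq:stability_locpot_Fij_ortho1}, \eqref{eq:stability_locpot_Fij_ortho2} and $\|F_{i,j}\|_{H(\Omega_e)}^2\leq(1+\delta)\frac{c}{2}$ simultaneously; being finitely many conditions, one tolerance suffices.

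For part (c), let $V\subseteq H_{\widehat q_j}(\Omega_e)$ with $\dim V\leq 3d+2N$, and seek $F_j=\sum_{i=1}^{3d+2N+1}\alpha_iF_{i,j}$ with $\sum_i\alpha_i^2=1$ and $F_j\perp V$. Since $F_j\perp V$ amounts to at most $\dim V\leq 3d+2N$ homogeneous linear equations in the $3d+2N+1$ unknowns $\alpha_i$, a unit vector $\alpha$ satisfying them exists, and then $F_j\in V^\perp\cap H_k$ with $F_j\in H_{\widehat q_j}(\Omega_e)$. Expanding the quadratic forms, using \eqref{eq:stability_locpot_Fij}--\eqref{eq:stability_locpot_Fij_ortho2}, $\sum_i\alpha_i^2=1$, and $\sum_{i\neq i'}|\alpha_i\alpha_{i'}|=\big(\sum_i|\alpha_i|\big)^2-1\leq(3d+2N+1)-1$, the diagonal terms dominate the off-diagonal ones:
\[
\int_\Omega w_j|S_{\widehat q_j}(F_j)|^2\dx\geq(2-\delta)-\delta(3d+2N)=2-\delta(3d+2N+1)>1,
\]
\[
\|F_j\|_{H(\Omega_e)}^2\leq(1+\delta)\tfrac{c}{2}+\tfrac{c}{2}\delta(3d+2N)=\tfrac{c}{2}\big(1+\delta(3d+2N+1)\big)<c,
\]
the strict inequalities holding because the choice of $\delta$ forces $\delta(3d+2N+1)<1$. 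In particular $F_j\neq 0$, which proves (c). The main obstacle is part (a): one must produce $3d+2N+1$ localized potentials simultaneously orthogonal with respect to \emph{two} different bilinear forms — the $H(\Omega_e)$-inner product and the \emph{indefinite} weighted energy form $(F,G)\mapsto\int_\Omega w_j\,S_{\widehat q_j}(F)\,S_{\widehat q_j}(G)\dx$ — which is feasible only because Theorem~\ref{thm:localized} yields localized potentials inside \emph{any} subspace of finite codimension, so each round of orthogonality constraints merely enlarges a finite-dimensional excluded subspace without destroying the localization; the secondary technical point is the finite-codimension intersection-density fact in (b), everything else being routine perturbation and Gram-matrix estimates.
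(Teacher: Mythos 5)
Your proof is correct and follows essentially the same route as the paper: iterated application of Theorem~\ref{thm:localized} under a growing finite set of orthogonality constraints for (a), density of $\bigcup_l (H_l\cap H_{\widehat q_j}(\Omega_e))$ in $H_{\widehat q_j}(\Omega_e)$ plus continuity of $S_{\widehat q_j}$ for (b), and a normalized nontrivial linear combination annihilating $V$ together with a Gram-type estimate for (c). The only differences are cosmetic: you make explicit the Riesz representers and the finite-codimension density argument that the paper leaves implicit, and your off-diagonal bound $\sum_{i\neq i'}|\alpha_i\alpha_{i'}|\leq 3d+2N$ is marginally tighter than the paper's $\sum_{i,i'}|\lambda_i||\lambda_{i'}|\leq 3d+2N+1$ (which together with $\delta=\tfrac{1}{3d+2N+2}$ — the ``$n$'' in the lemma's statement of $\delta$ is evidently a typo for $N$ — closes the estimate exactly at $1$ resp.\ $c$).
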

\begin{proof} 
Let $j\in \{1,\ldots,m\}$.
\begin{enumerate}[(a)]
\item Theorem~\ref{thm:localized} yields that every subspace $V^\perp$ of finite codimension in $H_{\widehat q_j}(\Omega_e)$ contains 
$F$ that fulfill the property \eqref{eq:stability_locpot_hatF}. Hence, for $i=1$, we can apply Theorem~\ref{thm:localized} on $H_{\widehat q_j}(\Omega_e)$ 
to obtain $\widehat F_{1,j}$, and for $i>1$, we obtain $\widehat F_{i,j}$ by applying Theorem~\ref{thm:localized} on the subspace
\begin{align*}
& \left\{\widehat  F\in H_{\widehat q_j}(\Omega_e):\ 
\int_\Omega \left( \frac{1}{6} \chi_{M_j} - \frac{4}{3} \chi_{\Omega\setminus M_j} \right) S_{\widehat q_j}(\widehat F) S_{\widehat q_j}(\widehat F_{i',j}) \dx  = 0,\right.\\
& \qquad \qquad \qquad \qquad \left. \text{ and } \left( \widehat F, \widehat F_{i',j} \right)_{H(\Omega_e)}=0
\text{ for all } i'\in\{1,\ldots,i-1\}\right\},
\end{align*}
which is obviously of finite codimension in $H_{\widehat q_j}(\Omega_e)$, and this shows \eqref{eq:stability_locpot_hatF_ortho1} and \eqref{eq:stability_locpot_hatF_ortho2}.
\item From the finite codimension of $H_{\widehat q_j}(\Omega_e)$ in $H(\Omega_e)$, we obtain that
$\bigcup_{l\in \N} H_l\cap H_{\widehat q_j}(\Omega_e)$ is dense in $H_{\widehat q_j}(\Omega_e)$.
Hence, the assertion (b) follows from the continuity of the solution operator $S_{\widehat q_j}$.
\item Since $V\subseteq H_{\widehat q_j}(\Omega_e)$ has $\dim V \leq 3d+2N$, there exists a non-trivial linear combination
\[
0\neq F_j:=\sum_{i=1}^{3d+2N+1} \lambda_i F_{i,j} \in V^\perp \cap H_k, \quad \text{ with coefficient } \quad \lambda_i\in \R,
\]
where we normalize the coefficients so that $\sum_{i=1}^{3d+2N+1} |\lambda_i|^2 =1$ and $k\in \N$ is the same number given as in (b). Then,
\[
\sum_{i,i'=1}^{3d+2N+1} |\lambda_i|\, |\lambda_{i'}| \leq 3d+2N+1.
\]
By using \eqref{eq:stability_locpot_Fij}, \eqref{eq:stability_locpot_Fij_ortho1} and \eqref{eq:stability_locpot_Fij_ortho2}, a simple calculation shows that
\begin{align*}
\int_\Omega \left( \frac{1}{6} \chi_{M_j} - \frac{4}{3} \chi_{\Omega\setminus M_j} \right) |S_{\widehat q_j}(F_{j})|^2 \dx
&\geq 2-(3d+2N+2)\delta=1,\\
\norm{F_{j}}_{H(\Omega_e)}^2 &\leq \left( 1+(3d+2N+2)\delta \right) \frac{c}{2}=c.
\end{align*}
\kommentar{
\begin{align*}
\lefteqn{\int_\Omega \left( \frac{1}{6} \chi_{M_j} - \frac{4}{3} \chi_{\Omega\setminus M_j} \right) |S_{\widehat q_j}(F_{j})|^2 \dx}\\
&= \sum_{i,i'=1}^{3d+2N+1} \lambda_i \lambda_{i'} \int_\Omega \left( \frac{1}{6} \chi_{M_j} - \frac{4}{3} \chi_{\Omega\setminus M_j} \right) S_{\widehat q_j}(F_{i,j})S_{\widehat q_j}(F_{i',j}) \dx\\
&= \sum_{i=1}^{3d+2N+1} \lambda_i^2 \int_\Omega \left( \frac{1}{6} \chi_{M_j} - \frac{4}{3} \chi_{\Omega\setminus M_j} \right) |S_{\widehat q_j}(F_{i,j})|^2 \dx\\
&\quad {} + \sum_{i=1}^{3d+2N+1} \sum_{i\neq i'=1}^{3d+2N+1} \lambda_i \lambda_{i'} \int_\Omega \left( \frac{1}{6} \chi_{M_j} - \frac{4}{3} \chi_{\Omega\setminus M_j} \right) S_{\widehat q_j}(F_{i,j})S_{\widehat q_j}(F_{i',j}) \dx\\
&\geq \sum_{i=1}^{3d+2N+1} \lambda_i^2  (2-\delta) - \sum_{i,i'=1}^{3d+2N+1}  |\lambda_i|\, |\lambda_{i'}| \delta\\
&\geq 2-\delta - (3d+2N+1)\delta=2-(3d+2N+2)\delta
\end{align*}
and
\begin{align*}
\norm{F_{j}}_{H(\Omega_e)}^2&= \sum_{i=1}^{3d+2N+1} \lambda_i^2 \norm{F_{i,j}}_{H(\Omega_e)}^2
+ \sum_{i=1}^{3d+2N+1} \sum_{i\neq i'=1}^{3d+2N+1} \lambda_i \lambda_{i'} \left( F_{i,j}, F_{i',j} \right)_{H(\Omega_e)}\\
&\leq (1+\delta)\frac{c}{2}+ (3d+2N+1)\delta \frac{c}{2}=\left( 1+(3d+2N+2)\delta \right) \frac{c}{2}.
\end{align*}
}
\end{enumerate}
This completes the proof.
\end{proof}

Now, we can prove Theorem \ref{thm:stability}.

\emph{Proof of Theorem \ref{thm:stability}.} 
Let $q_1,q_2\in \QQ_{[-a,a]}$ with $q_1\neq q_2$, and set $r:=\frac{q_2-q_1}{\norm{q_2-q_1}_{L^\infty(\Omega)}}$. Then, by Lemma~\ref{lemma:stability_sets}, there exist 
$j\in \{1,\ldots,m\}$ with either 
\[
\text{(a) }
r\geq \frac{1}{2} \chi_{M_j} - \chi_{\Omega\setminus M_j}, \quad \text{ or } \quad \text{(b) } -r\geq \frac{1}{2} \chi_{M_j} - \chi_{\Omega\setminus M_j}.
\] 
In case (a), Theorem~\ref{Theorem for monotonicity} yields that there exists a subspace $V'\subseteq H_{q_1,q_2}(\Omega_e)$ of dimension $d(q_1)$, so that
\[
\frac{\Langle \left(\Lambda(q_2)-\Lambda(q_1)\right) F, F \Rangle}{\norm{q_2-q_1}_{L^\infty(\Omega)}}
\geq \int_\Omega r |S_{q_2}(F)|^2 \dx\quad \text{ for all } F\in (V')^\perp\subseteq H_{q_1,q_2}(\Omega_e).
\]
Also, Lemma~\ref{lemma:stability_hatq} yields a subspace $V''\subseteq H_{q_2,\widehat q_j}(\Omega_e)$ with $\dim V'' \leq d(q_2)+d(\widehat q_j)$, so that
\[
\int_\Omega r |S_{q_2}(F)|^2\dx \geq  \int_\Omega \left( \frac{1}{6} \chi_{M_j} - \frac{4}{3} \chi_{\Omega\setminus M_j} \right) |S_{\widehat q_j}(F)|^2 \dx  \quad \forall F\in (V'')^\perp\subseteq H_{q_2,\widehat q_j}(\Omega_e).
\]
Then $V:=V'+V''+H_{q_1}(\Omega_e)^\perp+H_{q_2}(\Omega_e)^\perp$ is a subspace with $\dim V \leq 3d+2N$,
and, by Lemma~\ref{lemma:stability_locpot}(c), there exists $F_j\in V^\perp \cap  H_k$ with $\norm{F_j}_{H(\Omega_e)}^2\leq c$, and 
\[
\int_\Omega r |S_{q_2}(F_j)|^2\dx \geq  \int_\Omega \left( \frac{1}{6} \chi_{M_j} - \frac{4}{3} \chi_{\Omega\setminus M_j} \right) |S_{\widehat q_j}(F_j)|^2 \dx\geq 1.
\]

Since $F_j\in V^\perp\cap H_k$, and the definition of $V$ implies that $V^\perp\subseteq H_{\widehat q_j}(\Omega_e)$ is a subspace of $H_{q_1,q_2}(\Omega_e)$, we have that $P_{q_1q_2} P_{H_l} F_j=P_{q_1q_2}  F_j=F_j$ for all $l\geq k$.
Hence, it follows from the self-adjointness of $P_{H_l}' P_{q_1q_2}' \left( \Lambda(q_2)-\Lambda(q_1) \right) P_{q_1q_2} P_{H_l}$
that for all $l\geq k$,
\begin{align*}
\lefteqn{\frac{\norm{P_{H_l}' P_{q_1q_2}' \left( \Lambda(q_2)-\Lambda(q_1) \right) P_{q_1q_2} P_{H_l}}_{\LL(H(\Omega_e),H(\Omega_e)^*)}}{\norm{q_2-q_1}_{L^\infty(\Omega)}}}\\
&=  \sup_{0\neq  F\in H(\Omega_e)}
\frac{\left|\Langle \left(\Lambda(q_2)-\Lambda(q_1)\right) P_{q_1q_2} P_{H_l} F,\ P_{q_1q_2} P_{H_l} F \Rangle \right|}{\norm{q_2-q_1}_{L^\infty(\Omega)}\, \norm{F}_{H(\Omega_e)}^2\, }\\
&\geq 
\frac{\left|\Langle \left(\Lambda(q_2)-\Lambda(q_1)\right) F_j, F_j \Rangle \right|}{\norm{q_2-q_1}_{L^\infty(\Omega)}\,\norm{F_j}_{H(\Omega_e)}^2\, }
\geq \frac{1}{\norm{F_j}^2_{H(\Omega_e)}} 
\int_\Omega r |S_{q_2}(F_j)|^2\dx\\
&\geq \frac{1}{\norm{F_j}^2_{H(\Omega_e)}}  \int_\Omega \left( \frac{1}{6} \chi_{M_j} - \frac{4}{3} \chi_{\Omega\setminus M_j} \right) |S_{\widehat q_j}(F_j)|^2 \dx 
\geq \frac{1}{\norm{F_j}^2_{H(\Omega_e)}} \geq \frac{1}{c}.
\end{align*}

In case (b), Theorem~\ref{Theorem for monotonicity} yields that there exists a subspace $V'\subseteq H_{q_1,q_2}(\Omega_e)$ with dimension $d(q_2)$, so that
\[
\frac{\left| \Langle \left(\Lambda(q_2)-\Lambda(q_1)\right) F, F \Rangle \right|}{\norm{q_2-q_1}_{L^\infty(\Omega)}}
\geq - \int_\Omega r |S_{q_1}(F)|^2 \dx\quad \text{ for all } F\in (V')^\perp\subseteq H_{q_1,q_2}(\Omega_e),
\]
so that the assertion follows analogously by using Lemma~\ref{lemma:stability_hatq} with $-r$ instead of $r$.
\endproof

\section*{Acknowledgment}
The authors would like to thank Professor Mikko Salo for fruitful discussions and helpful suggestions to improve this work.

Y.-H. Lin was supported by the Finnish Centre of Excellence in Inverse Modelling and Imaging (Academy of Finland grant 284715) and also by the Academy of Finland project number 309963.

\bibliographystyle{abbrv}
\bibliography{ref}

\end{document}